 \newtheorem{theorem}{Theorem}[section]
 \newtheorem{lemma}[theorem]{Lemma}
 \newtheorem{proposition}[theorem]{Proposition}
 \newtheorem{corollary}[theorem]{Corollary}
 \newtheorem{remark}[theorem]{Remark}
 \newenvironment{proof}{{\it Proof}. }{\hfill\END\\[0.5ex]}
\DeclareMathOperator{\vcurl}{\overrightarrow{\mathrm{curl}}}
\DeclareMathOperator{\curl}{curl}
\DeclareMathOperator{\divv}{div}
\DeclareMathOperator{\SL}{SL}
\DeclareMathOperator{\DL}{DL}
\newcommand{\D}{\mathrm{D}}
\newcommand{\J}{\mathrm{J}}
\DeclareMathOperator{\V}{V}
\DeclareMathOperator{\W}{W}
\DeclareMathOperator{\K}{K}
\DeclareMathOperator{\HH}{H}
\DeclareMathOperator{\I}{I}
\DeclareMathOperator{\sign}{sign}
\newcommand{\R}{\mathbb{R}}
\newcommand{\be}{\begin{equation}}
\newcommand{\ee}{\end{equation}}
\newcommand{\x}{{\bm{x}}}
\newcommand{\y}{{\bm{y}}}
\newcommand{\nnn}{\bm{n}}
\newcommand{\ttt}{\bm{t}}
 \makeatletter\@addtoreset{equation}{section}\makeatother
\newcommand{\END}{\hfill$\Box$}
\title{Stability estimates of Nystr\"om discretizations of Helmholtz decomposition boundary integral equation formulations for the solution of Navier scattering problems in two dimensions with Dirichlet boundary conditions}
\date{\today}
\author{V\'{\i}ctor Dom\'{\i}nguez\thanks{Dep. Estadística, Informática y Matemáticas. Campus de Tudela 31500 - Tudela, Spain. INAMAT2 Institute for Advanced Materials and Mathematics. e-mail: victor.dominguez@unavarra.es. }\and Catalin Turc\thanks{  Department of
Mathematical Sciences, New Jersey  Institute of Technology,
Univ. Heights. 323 Dr. M. L. King Jr. Blvd, Newark, NJ 07102, USA, e-mail: catalin.c.turc@njit.edu.}}
\begin{document}
\maketitle
\begin{abstract}
 Helmholtz decompositions of elastic fields is a common approach for the solution of Navier scattering problems. Used in the context of Boundary Integral Equations (BIE), this approach affords solutions of Navier problems via the simpler Helmholtz boundary integral operators (BIOs). Approximations of Helmholtz Dirichlet-to-Neumann (DtN) can be employed within a regularizing combined field strategy to deliver BIE formulations of the second kind for the solution of Navier scattering problems in two dimensions with Dirichlet boundary conditions, at least in the case of smooth boundaries. Unlike the case of scattering and transmission Helmholtz problems, the approximations of the DtN maps we use in the Helmholtz decomposition BIE in the Navier case require incorporation of lower order terms in their pseudodifferential asymptotic expansions. The presence of these lower order terms in the Navier regularized BIE formulations complicates the stability analysis of their Nystr\"om discretizations in the framework of global trigonometric interpolation and the Kussmaul-Martensen kernel singularity splitting strategy. The main difficulty stems from compositions of pseudodifferential operators of opposite orders, whose Nystr\"om discretization must be performed with care via pseudodifferential expansions beyond the principal symbol. The error analysis is significantly simpler in the case of arclength boundary parametrizations and considerably more involved in the case of general smooth parametrizations which are typically encountered in the description of one dimensional closed curves.
  \newline \indent
  \textbf{Keywords}: Time-harmonic Navier scattering problems, Helmholtz decomposition, boundary integral equations, pseudodifferential calculus, Nystr\"om discretizations, regularizing operators.\\

 \textbf{AMS subject classifications}:
 65N38, 35J05, 65T40, 65F08
\end{abstract}

\section{Introduction}

Elastic waves in homogeneous media, that is solutions of the time harmonic Navier equations, can be expressed via Helmholtz decompositions as linear superpositions of P-waves and S-waves, which are in turn solutions of Helmholtz equations with different wavenumbers related to the Lam\'e constants of the medium in which they propagate. The enforcement of Dirichlet boundary conditions in this approach leads to coupled boundary conditions for the pressure and the shear waves featuring normal and tangential derivatives on the boundary of the scatterer. This simple observation affords the possibility to use Helmholtz potentials corresponding to the pressure and shear wave numbers and their related boundary integral operators (BIOs) to solve time harmonic Navier scattering problems in homogeneous media. This approach has to be contrasted to the classical BIE approaches based on Navier Green's function potentials~\cite{dominguez2008dirac,dominguez2012fully,dominguez2014nystrom,DoSaSa:2015,faria2021general,chapko2000numerical,dominguez2021boundary}.

The combined field approach~\cite{BrackhageWerner}, which is the method of choice to deliver robust BIE formulations for Helmholtz scattering problems at any frequencies, leads in this context to systems of BIE of the first kind (pseudodifferential operators of order one) owing to the presence of Helmholtz hypersingular BIOs. To compound the difficulty, the principal symbols (in the pseudodifferential operator sense) of these combined field formulations are defective matrix operators~\cite{dong2021highly,DomTurc:2023}. The regularized combined field methodology~\cite{boubendir2015regularized} relies on approximations of Dirichlet to Neumann ({\rm DtN}) operators to construct intrinsically well conditioned boundary integral equations, which, in the case of smooth boundaries, are always of the second kind. This strategy was successfully applied to the design of well conditioned Navier BIE by incorporating suitable approximations of Navier DtN operators~\cite{chaillat2015approximate,chaillat2020analytical,dominguez2021boundary,osti_10185828}.  The Helmholtz decomposition approach bypasses the need to construct the latter, as only Helmholtz DtN are required~\cite{DomTurc:2023}. However, in order to mitigate the aforementioned defective property, the DtN approximations require incorporating lower order terms in the pseudodifferential asymptotic expansion of DtN operators~\cite{DomTurc:2023}. Leveraging arclength parametrization, the pseusodifferential calculus beyond the principal symbol can be carried out within the framework of periodic Fourier multipliers. This calculus gets more delicate in the case of arbitrary parametrizations, and we present in this paper such a generalization that is invariant of the parametrization.

We introduce and analyze stable Nystr\"om discretizations of the regularized BIE for the solutions of Navier scattering problems in the Helmholtz decomposition framework. The stable Nystr\"om discretization of the composition of pseudodifferential operators of opposite orders that typically feature in regularized BIE formulations must be done with care~\cite{BoTuDo:2015}. A stable approach for handling such compositions within the global trigonometric framework involves isolating the principal parts of pseudodifferential operators with opposite orders and explicitly composing them using Fourier multiplication calculus. Once the principal parts were dealt with, the composition of pseudodifferential operators of negative orders is benign, and can be realized by simple multiplication of Nystr\"om discretization matrices. Our Nystr\"om approach relies on Kussmaul-Martensen kernel logarithmic splittings  \cite{Kussmaul,MR0147661}, which leads naturally to constructing asymptotic pseudodifferential expansions of the Helmholtz BIOs and therefore to carrying out operator compositions per the prescriptions above. { This approach was adopted and developed in the 80s of the previous century  by R. Kress and collaborators to provide a full discretization of the four layer operators for Helmholtz equations for which a rigorous proof of stability and convergence was given in the Sobolev framework cf  \cite{Kress,KressH} (see also \cite{MR3463450,DomCat:2015}). For Helmholtz equation, the resulting scheme is known to be superalgebraically convergent, that is, faster than any negative power of the number of points used in this discretization, provided that the exact solution is smooth. We stress that same convergence order is stated in this work for this more complex equation  {(see Theorems \ref{theo:4.5} and \ref{theo:5.4})}.

The error analysis of the Nystr\"om discretizations of the regularized BIE considered in this paper is conducted along the guidelines above, with extra complications that are inevitable to the presence of several terms in the pseudodifferential expansions of some of the key BIOs in our formulations, including the regularizing operators and the DtN approximations themselves. We chose to present the analysis both in the simpler arclength parametrization framework, as well as in the more complicated, yet more realistic case of general smooth parametrization of closed boundary curves. The use of smooth but non-arc length parametrizations is often necessary. In practice, it is not difficult to compute numerically sufficiently accurate approximations of the arc parametrization, even for rather complex geometries. However, such parametrizations result in an approximately uniform distribution of points along the curve, failing to distinguish between regions of rapid or slow variation in the curve (i.e., regions of high or low curvature). Since some of the most popular numerical schemes, such as the superalgebraically convergent Kress-Nystr\"om discretization of the Helmholtz boundary integral operators we consider in this paper, use second and even third derivatives of the parametrization in the definition, this results in poor performance and/or very high sensitivity to round-off errors arising in the splitting of the kernels of the integral operators into smooth and singular parts. We provide evidence of these phenomena in the numerical experiments section.

We also believe that the design of other discretizations -such as those based on Petrov-Galerkin, Collocation or Qualocation spline based methods (see \cite{MR0717691,MR1626332,MR0958484,Saranen} and references therein) or even deltaBEM schemes \cite{MR3141718,deltaBEM}- and their mathematical analysis, are also possible up to some extent. However, in addition to the high order the Nystr\"om method enjoys,  there is a notable feature that makes the method considered and fully analysed here attractive: Both the regularizer and the approximation we propose for the Dirichlet-to-Neumann operator used in the  pressure and shear wave  representation formula are  diagonal Fourier operators (i.e., convolution operators) for arc-length parametrizations of the curve  which favours a simple integration with the solver. For general parametrizations this is only true for their principal parts, which is still advantageous. On the other hand,  we are confident that Nystr\"om discretization  can also be analysed for the Neumann problem -the integral equation is in this case of order 1 (i.e. it behaves as a derivative)- by employing similar tools as those we have developed in this paper in a more complex and/or technical analysis. Numerical experiments support clearly this conjecture (see \cite{DomTurc:2023}).

The paper is organized as follows: in Section~\ref{setting} we introduce the boundary value problem for Navier equations in two dimensions and we present the Helmholtz decomposition approach and a first boundary integral formulation for this problem.  In Section~\ref{Helm} we rewrite the boundary integral equations as periodic operator equations by means of arc-length parametrizations of the boundary of the Navier problem domain. The required regularizer  is then introduced and the well-posedness of the equation is stated. In Section~\ref{err_anal} we introduce  the  Nystr\"om method and prove stability and superalgebraically convergence for this equation; in Section~\ref{gen} we construct the regularizing operators for general parametrizations, we extend the Nystr\"om discretization for the regularized formulations and show that stability and superalgebraically convergence is preserved.
Finally, we present in Section~\ref{num} a few numerical experiments to illustrate the second kind nature of the regularized formulations, the superalgebraically convergence proved in this paper as well as comparisons between the iterative behavior of Nystr\"om solvers for both types of parametrizations considered in this paper.

\section{Navier equations and boundary element method}\label{setting}

\subsection{Boundary value problem and Helmholtz decomposition}

For any vector function ${\bf u}=(u_1,u_2)^\top:\mathbb{R}^2\to   \mathbb{R}^2$ (vectors in this paper will be always regarded as {\em column} vectors)  the strain tensor in  a linear isotropic and homogeneous elastic medium with Lam\'e constants $\lambda$ and $\mu$  is defined as
\[
 \bm{\epsilon}({\bf u}):=\frac{1}2 (\nabla {\bf u}+(\nabla {\bf u})^\top)
 =\begin{bmatrix}
  \partial_{x_1} u_1& \tfrac{1}2\left(\partial_{x_1} u_2+\partial_{x_2} u_1\right)     \\
   \tfrac{1}2\left(\partial_{x_1} u_2+\partial_{x_2} u_1\right) &
   \partial_{x_2} u_2
     \end{bmatrix}.
\]
The stress  tensor is then given by
\[
 \bm{\sigma}({\bf u}):=2\mu \bm{\epsilon}({\bf u})+\lambda (\divv{\bf u}) I_2
\]
where $I_2$ is the identity matrix of order 2 and the Lam\'e coefficients  $\lambda$  are assumed to satisfy $\lambda,\ \lambda +2\mu>0$.  For $\Omega$ a smooth bounded domain with boundary $\Gamma$, the exterior Dirichlet problem for the time-harmonic elastic wave (Navier) equation is
\begin{equation}\label{eq:NavD}
\left|
\begin{array}{l}
 \divv\bm{\sigma}({\bf u})+\omega^2{\bf u}
 =0,\quad \text{in }\Omega^+:=\mathbb{R}^2\setminus{\Omega},\\
 \gamma_{\Gamma} {\bf u} = -{\bf u}^{\rm inc},\\
 \text{+Kupradze radiation condition}
 \end{array}
 \right.
\end{equation}
where the frequency $\omega\in\mathbb{R}^+$ and the {\rm divergence} operator ${\rm div}$ is applied row-wise. Here ${\bf u}^{\rm inc}$ is a solution of the Navier equation in a neighborhood of  $\overline{\Omega}$, typically in $\mathbb{R}^2$, although point source elastic waves are also supported.

The Kupradze radiation condition at infinity \cite{AmKaLe:2009,kupradze2012three} can be described as follows: if
\begin{equation}\label{eq:Hdecomp1}
\mathbf{u}_p:=-\frac{1}{k_p^2} \nabla \divv \mathbf{u}, \quad \mathbf{u}_s:=\mathbf{u}-\mathbf{u}_p= \vcurl {\rm{curl} \ {\bf u}}
\end{equation}
($\vcurl \varphi :=(\partial_{x_2} \varphi,-\partial_{x_1}\varphi)$, $\curl{\bf u} := \partial_{x_2}u_1-\partial_{x_1} u_2$ are respectively the vector and scalar curl, or rotational, operator)
with
\begin{equation}\label{eq:ks:kp}
k_p^2:=\frac{\omega^2}{\lambda+2 \mu}, \quad k_s^2:=\frac{\omega^2}{\mu}
\end{equation}
the associated the pressure and stress wave-numbers wave-numbers, then
\[
\frac{\partial \mathbf{u}_p}{\partial \widehat{\bm{x}}}(\bm{x})-i k_p \mathbf{u}_p(\bm{x}),\quad \frac{\partial \mathbf{u}_s}{\partial \widehat{\bm{x}}}(\bm{x})-i k_s \mathbf{u}_s(\bm{x})=o\left(|\bm{x}|^{-1 / 2}\right), \quad
 \widehat{\bm{x}} := \frac{1}{|\bm{x}|}\bm{x}.
\]

 In view of \eqref{eq:Hdecomp1}, a common approach to the solution of Navier equations is to look for the fields ${\bf u}$ in the form
\begin{equation}\label{eq:Hdecomp2}
{\bf u}=\nabla u_p+\vcurl  {u_s}
\end{equation}
where     $u_p$ and $u_s$ are respectively solutions of the Helmholtz equations in $\Omega^+$  with wave-numbers $k_p$ and $k_s$  satisfying the Sommerfeld radiation condition at infinity which is in itself a simple consequence of the Kupradze radiation condition. Specifically, the scalar fields in the Helmholtz decomposition are solutions of the following Helmholtz problems
\begin{equation}\label{eq:2.5}
 \left|
 \begin{array}{ll}
  \Delta u_p +k_p^2 = 0, \quad \text{in }\Omega^+\\
\partial_{\widehat{\bm{x}}}u_p-ik_p u_p =o(|\bm{x}|^{-1/2}), \quad \text{as }
|{\bm x}|\to\infty
 \end{array}
 \right.\quad
 \left|
 \begin{array}{ll}
  \Delta u_s +k_s^2 = 0, \quad \text{in }\Omega^+\\
\partial_{\widehat{\bm{x}}}u_s-ik_s u_s =o(|\bm{x}|^{-1/2}), \quad \text{as }
|{\bm x}|\to\infty.
 \end{array}\right.
\end{equation}
If $\nnn$ denotes the outward unit normal vector to $\Gamma$, $\ttt$, the positively (counterclockwise) oriented tangent field given by
\begin{equation}\label{eq:2.6}
 \ttt =-{\rm Q}\nnn, \quad  {\rm Q}:=\begin{bmatrix}
                                    0 & 1\\
                                   -1 &0
                                  \end{bmatrix}
\end{equation}
which satisfy
\[
\begin{aligned}
\nabla{\bf u}_p\cdot\nnn  &= \partial_{\nnn} u_p  \quad &
\nabla{\bf u}_p\cdot\ttt  &= \partial_{\ttt} u_p  \\
\vcurl{\bf u}_s\cdot\nnn  &= \partial_{\ttt} u_s  \quad &
\vcurl{\bf u}_s\cdot\ttt  &= -\partial_{\nnn} u_s
\end{aligned}
\]
($\partial_{\nnn}$ and $\partial_{\ttt}$ are then the exterior normal and the positively oriented tangent derivative), the Dirichlet condition in problem \eqref{eq:NavD} leads to the following boundary conditions on $\Gamma$ for $u_p$ and $u_s$:
\begin{equation}\label{eq:2.7}
\begin{array}{rcll}
\partial_{\nnn}u_p+\partial_{\ttt}u_s&=&-{\bf u}^{\rm inc}\cdot \nnn,& {\rm on}\ \Gamma, \\
\partial_{\ttt}u_p-\partial_{\nnn}u_s&=&-{\bf u}^{\rm inc}\cdot \ttt,& {\rm on}\ \Gamma.
\end{array}
\end{equation}
The reformulation of the Navier scattering problem with Dirichlet boundary conditions in the Helmholtz decomposition framework is readily amenable to boundary integral formulations, as we will explain in what follows.


%
 \begin{remark}\label{remark:curves:2pi} Throughout this article we will assume that $\Gamma$, the boundary of the domain $\Omega^+$, is of length $2\pi$. The general case can be reduced to this particular scenario  by replacing the wawenumber(s)   $k$, and the complexifications $\widetilde{k}$ we will introduce later, by $ Lk/(2\pi)$, its characteristic length, and $L\widetilde{k}/(2\pi)$, where $L$ is the length of the curve.
 \end{remark}

\subsection{Helmholtz BIOs}\label{GradHessCalculations}

For a given wave-number $k>0$ and a functional density $\varphi_\Gamma$ on the boundary $\Gamma$ we define the Helmholtz single and double layer potentials in the form
\[
 \SL_{k,\Gamma}[\varphi_\Gamma](\x):=\int_\Gamma \phi_k(\x-\y)\varphi_\Gamma(\y) {{\rm d}\y},\quad
 \DL_{k,\Gamma}[\varphi_\Gamma](\x):=\int_\Gamma \frac{\partial \phi_k(\x-\y)}{\partial \nnn(\y)}\varphi_\Gamma(\y) {{\rm d}\y},\ \x\in\mathbb{R}^2\setminus\Gamma
\]
with
\[
 \phi_k(\x) = \frac{i}4 H_0^{(1)}(k|\x|)
\]
 the fundamental solution of the Helmholtz equation  ($H_0^{(1)}$ is then the Hankel function of first kind and order zero).

The four BIOs of the Calder\'on's calculus associated with the Helmholtz equation are defined by applying the exterior/interior Dirichlet and Neumann traces on $\Gamma$ (denoted in what follows by $\gamma_{\Gamma}^+/\gamma_{\Gamma}^-$ and $\partial_{\nnn}^+/\partial_{\nnn}^-$ respectively) to the Helmholtz single and double layer potentials cf. \cite{hsiao2008boundary,mclean:2000,Saranen}. Specifically,
\begin{equation}\label{eq:traces}
\begin{aligned}
\gamma_{\Gamma}^\pm  \SL_{k,\Gamma}\varphi_\Gamma &=\V_{k,\Gamma}\varphi_\Gamma, \quad & \partial_{\nnn}^\pm  \SL_{k,\Gamma}\varphi_\Gamma &=\mp\frac{1}{2}\varphi_\Gamma +\K_{k,\Gamma}^\top\varphi_\Gamma,  \\
\gamma_{\Gamma}^\pm \DL_{k,\Gamma}\varphi_\Gamma &=\pm\frac{1}{2}\varphi_\Gamma+\K_{k,\Gamma}\varphi_\Gamma, \quad & \partial_{\nnn}^\pm \DL_{k,\Gamma}\varphi_\Gamma &=\W_{k,\Gamma}\varphi_\Gamma
\end{aligned}
\end{equation}
where, for $\x\in\Gamma$,
\begin{eqnarray}
( \V_{k,\Gamma}\varphi_\Gamma)(\x) &:=&\int_\Gamma \phi_k(\x-\y)\varphi_\Gamma(\y) {{\rm d}\y},\label{eq:V}\\
( \K_{k,\Gamma}\varphi_\Gamma)(\x) &:=&\int_\Gamma \frac{\partial \phi_k(\x-\y)}{\partial \nnn(\y)}\varphi_\Gamma(\y) {{\rm d}\y},\label{eq:Kt}\\
( \K_{k,\Gamma}^\top\varphi_\Gamma)(\x)  &:=& \int_\Gamma \frac{\partial \phi_k(\x-\y)}{\partial \nnn(\x)}\varphi_\Gamma(\y) {{\rm d}\y},\label{eq:K}\\
( \W_{k,\Gamma}\varphi_\Gamma)(\x) &:=& \mathrm{f.p.}\int_\Gamma \frac{\partial^2 \phi_k( \x-\y )}{\partial \nnn(\x)\,\partial \nnn(\y) }\varphi_\Gamma(\y) {{\rm d}\y} \nonumber\\
&=&\partial_{\ttt(\x)} \V_{k,\Gamma}[\partial_{\ttt} \varphi_\Gamma](\x)-k^2 \ttt({\x})\cdot (\V_{k,\Gamma}[\ttt \varphi_\Gamma])({\x})\label{eq:W}
\end{eqnarray}
are respectively the single layer, double layer, adjoint double and hypersingular operator.
In the latter operator, ``f.p.'' stands for {\em finite part} since the kernel of the operator is strongly singular. However, as noted, the hypersingular operator can also be written in terms of the single-layer operator and the tangent derivative $\partial_{\ttt}$ operator, an alternative expression which is {sometimes} referred to as Maue's formula.  The following not-so-well-known identities:
\begin{eqnarray}
\label{eq:dt:SL} \partial_{\ttt(\x)}  (\SL_{k,\Gamma}\varphi_\Gamma) (\x) &=&  (\partial_{\ttt} \V_{k,\Gamma}\varphi_\Gamma)(\x),\\
\label{eq:dt:DL} \partial_{\ttt(\x)}  (\DL_{k,\Gamma}\varphi_\Gamma) (\x) &=& \frac12\partial_{\ttt}\varphi(\x) +k^2(\V_{k,\Gamma}(\nnn \varphi_\Gamma))(\x)\cdot\ttt(\x)-
(\K^{\top}\partial_{\ttt}\varphi)(\x),
\end{eqnarray}
again with $\x\in\Gamma$,
 will be used in the formulation of the method. We refer the reader to \cite[Th. 3.5]{DomTurc:2023} for a proof.

\begin{remark}From now on we will denote the layer operators and potentials associated to the Helmholtz equation with a generic wave-number $k$ with the subscript ``$k$''.  When we want to refer  to $k_p$ or $k_s$, the pressure and strain wave-numbers associated to the Navier equation, we will use simply $p$ and $s$ for lighten the notation.
\end{remark}

We propose then a solution of \eqref{eq:Hdecomp2} in the form of a combined potential formulation for $u_p$ and $u_s$:
\begin{equation}\label{eq:up_and_us}
\begin{aligned}
 u_p &= {\DL}_{p,\Gamma}\varphi_{p,\Gamma}-   {\SL}_{p,\Gamma}\mathrm{Y}_{p,\Gamma} \varphi_{p,\Gamma},\quad\\
 u_s &= {\DL}_{s,\Gamma}\varphi_{p,\Gamma}- {\SL}_{s,\Gamma}\mathrm{Y}_{s,\Gamma}
 \varphi_{s,\Gamma},
\end{aligned}
\end{equation}
in terms of densities $\varphi_{p,\Gamma},\ \varphi_{s,\Gamma}$. Here $\mathrm{Y}_{p,\Gamma} $ and $\mathrm{Y}_{s,\Gamma} $ are suitable approximations, which will be described precisely below, of the Dirichlet-to-Neumann operators
$\mathop{\rm DtN}_{p}$ and
$\mathop{\rm DtN}_{s}$
corresponding to exterior Helmholtz problems for $k_p$ and $k_s$ respectively. By construction, $u_p$ and $u_s$ are solutions of the Helmholtz equations satisfying the radiation (Sommerfeld) conditions at infinity. It is worth noting that with the use of such operators, the equation \eqref{eq:up_and_us} resembles the representation formula of the exterior solutions of the Helmholtz equation in terms of the Dirichlet and Neumann data:
\[
u_k = {\DL}_{k,\Gamma}\gamma_{\Gamma} u_k-   {\SL}_{k,\Gamma}  \partial_{\nnn}u_k.
\]

By imposing the Dirichlet conditions in the $(\nnn,\ttt)$ frame on $\Gamma$, and making use of the identities \eqref{eq:traces}--\eqref{eq:dt:DL},  the combined field approach leads to the following system of BIE for the densities $\varphi_{p,\Gamma}$ and $\varphi_{s,\Gamma}$
\[
 \mathcal{A}_{\rm comb,\Gamma} \begin{bmatrix}
                          \varphi_{p,\Gamma}\\
                          \varphi_{s,\Gamma}
                         \end{bmatrix}
=\begin{bmatrix}
   f_{\nnn,\Gamma}\\
   f_{\ttt,\Gamma}
   \end{bmatrix},\quad   f_{\nnn,\Gamma} = -\gamma_{\Gamma}{\bf u}^{\rm inc}\cdot\nnn,\ f_{\ttt,\Gamma} = -\gamma_{\Gamma}{\bf u}^{\rm inc}\cdot\ttt
 \]
 where
 \begin{equation}\label{eq:firstEq}
 \mathcal{A}_{\rm comb,\Gamma}   :=
 \mathcal{A}_{\rm DL,\Gamma} -   \mathcal{A}_{\rm SL,\Gamma}  \begin{bmatrix}
                                         \mathrm{Y}_{p,\Gamma} & \\
                                            & \mathrm{Y}_{s,\Gamma}
                                       \end{bmatrix}
 \end{equation}
with
  \begin{eqnarray}
  \mathcal{A}_{\rm DL,\Gamma}&:=&
\begin{bmatrix}
\W_{p,\Gamma}&\frac{1}2 \partial_{\ttt} + k_{s}^2\ttt \cdot \V_{s,\Gamma}[\nnn\,\cdot\,]-\K_{s,\Gamma}^\top \partial_{\ttt}       \\
\frac{1}2 \partial_{\ttt} + k_{p}^2\ttt \cdot \V_{p,\Gamma}[\nnn\,\cdot\,]-\K_{p,\Gamma}^\top \partial_{\ttt}  &-\W_{s,\Gamma}
\end{bmatrix}\label{eq:ADL}
\\
\mathcal{A}_{\rm SL,\Gamma} &:=&
\begin{bmatrix}
-\frac{1}2 \I + \K_{p,\Gamma}^\top &  \partial_{\ttt} \V_{s,\Gamma}  \\
\partial_{\ttt} \V_{p,\Gamma}  &\frac{1}2 \I - \K_{s,\Gamma}^\top
\end{bmatrix}.\label{eq:ASL}
 \end{eqnarray}

Equation \eqref{eq:firstEq} is unsuitable for numerical approximation, regardless of the choice of the operators $\mathrm{Y}_{p,\Gamma}$ and $\mathrm{Y}_{s,\Gamma}$. Indeed, the operators $\mathcal{A}
_{{\rm comb},\Gamma}: H^{s+1}(\Gamma)\times H^{s+1}(\Gamma)\to H^s(\Gamma) \times H^s(\Gamma)$, where $H^s(\Gamma)$ is the  Sobolev space on $\Gamma$ of order $s$,  although continuous, are not Fredholm operators due to the fact that their kernels and coimages of the principal part are not finite-dimensional. The  root of the problem goes deeper, and can be traced to the Helmholtz boundary conditions themselves. Indeed, the boundary conditions in the $(\nnn,\ttt)$ framework {\eqref{eq:2.7}} can be recast via the exterior Helmnoltz Neumann-to-Dirichlet operators $\mathop{\rm NtD}\nolimits_{k,\Gamma}$ (inverses of the DtN operators) in an alternative form featuring the matrix operator
\begin{equation}\label{eq:def:C}
\mathcal{C}:=
\begin{bmatrix}
\I                                            &  \partial_{\ttt}\mathop{\rm NtD}\nolimits_{s,\Gamma} \\
-\partial_{\ttt}\mathop{\rm NtD}\nolimits_{p,\Gamma} & \I
\end{bmatrix}
: H^s(\Gamma)\times H^s(\Gamma)\to H^s(\Gamma)\times H^s(\Gamma).
\end{equation}
For any Helmholtz exterior solution $u$ we can express
\begin{equation}\label{eq:2.18}
\mathop{\rm NtD}\nolimits_{k,\Gamma}(\partial_{\nnn} u)
= \gamma_{\Gamma}  u
= - 2\V_{k,\Gamma} \partial_{\nnn} u  -2\K_{k,\Gamma} \gamma_{\Gamma} u.
\end{equation}
Besides, if $\V_{0,\Gamma}$ is the Single Layer Operator for the Laplace operator,  it holds  $\K_{k,\Gamma}, \V_{k,\Gamma}-\V_{0,\Gamma} :H^s(\Gamma)\to H^{s+3}(\Gamma)$, i.e. operators of order $-3$. Therefore
\[
 \mathcal{C}= \underbrace{\begin{bmatrix}
              \I                  & -2\partial_{\ttt} \V_{0,\Gamma}\\
              -2\partial_{\ttt}\V_{0,\Gamma}  & -\I\\
              \end{bmatrix}}_{=\mathcal{C}_0} + \mathcal{K}_{2}
\]
with $\mathcal{K}_{2}$ of order $-2$. The principal part of $\mathcal{C}$, $\mathcal{C}_0$, is defective, actually nilpotent, module operators of $-\infty$ order. Indeed
\[
 \mathcal{C}_0^2 = \begin{bmatrix}
              \I + 4\partial_{\ttt} \V_{0,\Gamma}\partial_{\ttt} \V_{0,\Gamma}                 &  \\
                & \I + 4\partial_{\ttt} \V_{0,\Gamma}\partial_{\ttt} \V_{0,\Gamma} 
              \end{bmatrix}= \begin{bmatrix}
               \I               & 0\\
              0&\I
              \end{bmatrix}(\K_{0,\Gamma}^\top)^2
\]
due to the identities
\[
 \partial_{\ttt}\V_{0,\Gamma}\partial_{\ttt} = \W_{0},\quad \W_{0,\Gamma}\V_{0,\Gamma}=-\frac{1}4\I +(\K_{0,\Gamma}^\top)^2.
\] 
(That is, formally \eqref{eq:W} with $k=0$  and Calderon identities for Laplace equation).  $\K^\top_{0,\Gamma}$ is known to be an integral operator with smooth kernel. So
 \[ 
   \mathcal{C}_0^2 : H^s(\Gamma)\times H^s(\Gamma)\to H^r(\Gamma)\times H^r(\Gamma)
 \]
 for any $s,r$ (i.e., a pseudodifferential operator of order $-\infty$).

%
%
%
Naturally, this defective character is inherited by the matrix BIOs $\mathcal{A}_{\rm SL}$ and $\mathcal{A}_{\rm DL}$. Nevertheless, regularizing operators $\mathcal{R}$ can be employed to render the composition $\mathcal{A}_{\rm comb,\Gamma}\mathcal{R}:H^s(\Gamma) \times H^s(\Gamma) \to H^s(\Gamma) \times H^s(\Gamma)$  continuously invertible. For instance, it can be shown that $\mathcal{A}_{\rm comb,\Gamma}^2: H^{s+1}(\Gamma)\times H^{s+1}(\Gamma) \to H^{s+1}(\Gamma)\times H^{s+1}(\Gamma)$ is invertible {\cite{DomTurc:2023,dong2021highly}}, and thus the obvious choice $\mathcal{R}=\mathcal{A}_{\rm comb,\Gamma} (\mathcal{A}_{\rm comb,\Gamma}^2)^{-1}$ could be a regularizing candidate. However, its numerical evaluation becomes too expensive for it to be a viable option in practice. Hence more efficient alternatives have to be considered.

While we postpone the proper definitions of $\mathcal{R}$ and $\mathcal{Y}$ to the next section since they require a principal symbol pseudodifferential calculus, we can provide a general overview of our robust BIE approach. Assuming proper regularizing operators $\mathcal{R}$ are available, our method of solution is outlined below
\begin{enumerate}[label =(\roman*)]
 \item With $f_{\nnn,\Gamma} = -\gamma_{\Gamma}{\bf u}^{\rm inc}\cdot\nnn,\ f_{\ttt,\Gamma} = -\gamma_{\Gamma}{\bf u}^{\rm inc}\cdot\ttt$ find $(\lambda_{p,\Gamma}, \lambda_{s,\Gamma})$ such that
 \begin{equation}\label{eq:AR}
  \mathcal{A}_{\rm comb,\Gamma}\mathcal{R}\begin{bmatrix}
                              \lambda_{p,\Gamma}\\
                              \lambda_{s,\Gamma}
                             \end{bmatrix}
=
\begin{bmatrix}
   f_{\nnn,\Gamma}\\
   f_{\ttt,\Gamma}
   \end{bmatrix}.
 \end{equation}
\item Define
\begin{equation}\label{eq:AR2}
 \begin{bmatrix}
                              \varphi_{p,\Gamma}\\
                              \varphi_{s,\Gamma}
                             \end{bmatrix}=\mathcal{R}\begin{bmatrix}
                              \lambda_{p,\Gamma}\\
                              \lambda_{s,\Gamma}
                             \end{bmatrix}.
\end{equation}
\item Construct ${u}_p$ and $u_s$ according to \eqref{eq:up_and_us}.
\end{enumerate}

In the next section, we will introduce the parameterized Sobolev space which will be essential for the development of the principal symbol Fourier multiplier calculus, which, in turn, allows us to construct a regularizer operator $\mathcal{R}$. We will analyze the resulting combined field equations, we will describe a Nystr\"om discretization for their numerical solution, and we will establish the stability together with the order of convergence of the resulting scheme in the case of arc-length parametrizations. We turn our attention to the case of arbitrary smooth parametrizations, which is considerably more complex, in Section 4.

\section{ Regularized BIEs with arc-length parametrizations}\label{Helm}

We restrict ourselves in this section to work with a regular positive oriented arc-length parametrization of $\Gamma$.
Such an assumption simplifies considerably the construction of the aforementioned regularizing operators, as well as the stability analysis of the Nystr\"om discretizations of the ensuing regularized integral formulations. As we have already mentioned, we return in Section 5 to the case of arbitrary boundary parametrizations.

\subsection{Periodic Sobolev spaces and some useful operators}

Let then ${\bf x} = (x_1(t),x_2(t)):\mathbb{R}\to \Gamma$ be smooth, $2\pi-$periodic parametrization such that
\[
 |{\bf x}'(\tau)| = \sqrt{(x_1'(\tau))^2+(x_2'(\tau))^2} = 1,\quad \forall \tau
\]
The unit tangent and normal parameterized vector to $\Gamma$ (at ${\bf x}(\tau)$) are then given,  see \eqref{eq:2.6}, by
\[
 \ttt(\tau) = {\bf x}'(\tau),\quad
 \nnn(\tau) = \mathrm{Q}{\bf x}'(\tau).
\]
We will identify functions (or distributions) on $\Gamma$, $\varphi_\Gamma: \Gamma\to\mathbb{C}$, with functions, or distributions, on the real line via
\begin{equation}\label{eq:convention}
 \varphi(\tau) = \varphi_\Gamma({\bf x}(\tau)),
\end{equation}
so that
\[
(\partial_{\ttt}\varphi_\Gamma)\circ{\bf x} = \varphi'=: \D\varphi.
\]
Similarly, we denote
\[
(\V_k\varphi)(t)) = \int_0^{2\pi}
\phi_k( {\bf x}(t)-{\bf x}(\tau) )\varphi_\Gamma({\bf x}(\tau))\,
{\rm d}\tau
\]
as the parameterized version of $\V_{k,\Gamma}$. We follow the same convention for the remaining of the BIOs and potentials of Helmholtz Calder\'on calculus.

Sobolev spaces $H^s(\Gamma)$, $s\in\mathbb{R}$, can be then  identified with $2\pi-$periodic Sobolev spaces (see for instance \cite[Ch. 8]{Kress}) given by
\[
 H^s = \Big\{\varphi\in\mathcal{D}'(\mathbb{R}) \ :\  \varphi(\cdot+2\pi) = \varphi, \quad \|\varphi\|_{s}<\infty \Big\}
\]
($\mathcal{D}'(\mathbb{R})$ is  the space of distributions in $\mathbb{R}$). Here
\[
 \|\varphi\|_{s}^2 = |\widehat{\varphi}(0)|^2 + \sum_{n\ne 0} |n|^{2s}
 |\widehat{\varphi}(n)|^2,
\]
is the Sobolev norm of order $s$,
where
\[
\widehat{\varphi}(n) =(\varphi,e_{-n}):=\frac{1}{2\pi}\int_{0}^{2\pi} \varphi(\tau) e_{-n}(\tau)\,{\rm d}\tau,\qquad
 e_{n}(\tau):= \exp (   { i} n \tau  )
\]
is the $n-$th Fourier coefficient. (The integral must be understood in a week sense for non-integrable  functions $\varphi$).
Clearly
\[
 \D\varphi =  i \sum_{n\ne 0}  n \widehat{\varphi}(n) e_{n}.
\]
We will also need integer, positive and negative, powers of $\D$, and the averaging operator $\J$, defined in the following manner
\begin{equation}\label{eq:def:Ds:and:J}
 \D _r\varphi := ( \D  \varphi)^r= \sum_{n\ne 0} (2\pi i n)^r \widehat{\varphi}(n) e_{n}, \quad r\ne 0, \quad \J\varphi := \widehat{\varphi}(0).
\end{equation}
The periodic Hilbert operator
\begin{equation}\label{eq:def:H}
 \HH \varphi:=
i \bigg[ \widehat{\varphi}(0)+\sum_{n\ne 0}\mathop{\rm \rm sign}(n)\widehat{\varphi}(n)e_{n}(\cdot){\cdot})\bigg]  = \mathrm{p.v.}\frac{1}{2\pi}\int_0^{2\pi}  \cot\frac{\cdot-\tau}2 \varphi(\tau)\,{\rm d}\tau + \frac{i}{2\pi}\int_{0}^{2\pi} \varphi(\tau)\,{\rm d}\tau
\end{equation}
(``p.v.'' stands for the Cauchy  principal value; obviously the integral has to be understood in a distributional sense for non integrable functions.)
will also play  an essential role in what follows.
We will refer to these operators as Fourier multipliers since they are diagonal in the Fourier basis of complex exponentials $\{e_n\}_{n\in\mathbb{Z}}$.

Clearly, $\D_r:H^s\to H^{s-r}$ and $\HH:H^s\to H^s$ are continuous for any $s$. We then say that $\D_r$ and $\HH$ are pseudodifferential operators of order $r$ and $0$, respectively and will write
\[
 \D_r\in \mathrm{OPS}(r),\quad
 \HH\in \mathrm{OPS}(0).
\]
If $\mathrm{A}\in \mathrm{OPS}(r)$ for any $r$, such as the mean value operator  $\J$ introduced above, we simply write $\mathrm{A}\in\mathrm{OPS}(-\infty)$. It is a well-established result that $\mathrm{OPS}(-\infty)$ can be identified with periodic integral operators with smooth kernels \cite{hsiao2008boundary,Saranen}.

These notation will be extended to matrix operator in such a way that we write
\[
 \mathcal{A} =\begin{bmatrix}
            A_{11}&A_{12}\\
            A_{21}&A_{22}
           \end{bmatrix}\in\mathrm{OPS}(r)
\]
if $A_{ij}\in\mathrm{OPS}(r)$. Similarly, we will say that $\mathcal{A}$ is a Fourier {multiplier} operator if so are $A_{ij}$.

\subsection{Parameterized Boundary Integral Operators}

Let us introduce for $r=0,1,2,\ldots$
\begin{equation}  \label{eq:def:rhor}
\begin{aligned}
  \Lambda_r\varphi \ :=\  &\frac{1}{2\pi}\int_0^{2\pi}\rho_r(\replaced[id = vD]{\cdot}{t}-\tau)\varphi(\tau)\,{\rm d}\tau=\sum_{n=-\infty}^\infty \widehat{\rho}_r(n)\varphi(n)e_n, \\
  &\qquad \qquad \rho_r(\tau):=-(e_1(\tau)-1)^{r-1} \log\left(2\left|\sin \frac{ \tau}2\right |\right).
\end{aligned}
 \end{equation}


We point out that
\begin{equation}\label{eq:HD-1}
\Lambda_1 \varphi= \frac12
 \HH\D_{-1} \varphi
\end{equation}
which is equivalent to stating that
\[
 \widehat{\rho}_1(n)=\begin{cases}
             \frac{1}{2|n|} , &n\ne 0, \\[2ex]
            0 , & n=0.
           \end{cases}
\]
Similarly, it can be easily proven from the relation
\[
 \widehat{\rho}_r(n) = \widehat{\rho}_{r-1}(n)- \widehat{\rho}_{r-1}(n-1)=\sum_{j=0}^{r-1} \binom{r-1}j (-1)^{j-r+1}\widehat{\rho}_1(n-j)
\]
that
\[
 \widehat{\rho}_r(n) = \frac{1}{2r} \binom{n}r ^{-1}\mathop{\rm sign}(n),\quad n\ne 0,1,2,\ldots,r.
\]
%
More specifically,
\begin{equation}\label{eq:fourier:rho}
\begin{aligned}
 \widehat{\rho}_2(n)& =\begin{cases}
            \frac12 , & n=0,\\
             -\frac12 , & n=1,\\
            \frac{\sign n}{2 n(n-1) }, &n\ne 0,1,\\
           \end{cases} \quad
 \widehat{\rho}_3(n)=\begin{cases}
            -\frac34 , & n=0,2,\\
             1 , & n=1,\\
            \frac{1}{ |n(n-1)(n-2)| }, &n\ne 0,1,2,\\
           \end{cases}  \\
 \widehat{\rho}_4(n)&=\begin{cases}
             \frac{11}{12} , & n=0,\\
             -\frac{7}{4}  , & n=1,\\
              \frac{7}{4}  , & n=2,\\
            -\frac{11}{12}  , & n=3,\\
            \frac{3\sign n}{ n(n-1)(n-2)(n-3) }, &n\ne 0,1,2.\\
           \end{cases}  \quad
\end{aligned}
\end{equation}
\begin{lemma}\label{lemma:2.2}
 It holds
 \begin{eqnarray*}
  \V_k &=& \V^{(1)}_k \\
  &=& \Lambda_1  + \V_{k}^{(3)}=\frac{1}2\HH\D_{-1}  + \V_{k}^{(3)} \\
   &=&  \Lambda_1 + \frac{k^2}4\Lambda_3+\V_{k}^{(4)}=
   \frac{1}2\HH\D_{-1}-  \frac{k^2}4\HH\D_{-3} + \frac{k^2}4\underbrace{(\Lambda_3+\HH\D_{-3})}_{\in\mathrm{OPS}(-5)}+\V_{k}^{(4)}
 \end{eqnarray*}
 where, for $j=1{,}\  3,\  4$, $\V_{k}^{(j)} \in\mathrm{OPS}(-j)$ are integral operators which can be written in terms of periodic integral operators with explicit kernels as
 \begin{equation}\label{eq:01:lemma:2.2}
\V_{k}^{(j)} \varphi := \int_{0}^{2\pi } A^{(j)}(\cdot,\tau)
(e_1(\replaced[id = vD]{\cdot}{t}-\tau)-1)^{ j-1}
\log \left(4\sin^2\frac{\cdot -\tau}2\right) \varphi(\tau)\,{\rm d}\tau +\int_{0}^{2\pi } B^{(j)}(\cdot,\tau) \varphi(\tau)\,{\rm d}\tau
 \end{equation}
 where
$A^{(j)}$ and $B^{(j)}$ are smooth bi-periodic functions.
\end{lemma}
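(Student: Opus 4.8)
The plan is to expand the kernel $\phi_k(\x-\y)=\tfrac{i}{4}H_0^{(1)}(k|\x-\y|)$ using the classical series representation of the Hankel function, namely
\[
\frac{i}{4}H_0^{(1)}(z) = \frac{1}{2\pi}\log z \cdot J_0(z) + (\text{entire even function of }z),
\]
and then substitute $z = k|{\bf x}(t)-{\bf x}(\tau)|$. Because the parametrization is arclength, $|{\bf x}(t)-{\bf x}(\tau)|^2 = 4\sin^2\frac{t-\tau}{2}\,\chi(t,\tau)$ with $\chi$ smooth, bi-periodic, positive, and $\chi(t,t)\equiv 1$ (this is the standard factorization; one writes $|{\bf x}(t)-{\bf x}(\tau)|^2 = (t-\tau)^2 g(t,\tau)$ by Taylor with $g(t,t)=|{\bf x}'(t)|^2=1$, then trades $(t-\tau)^2$ for $4\sin^2\frac{t-\tau}{2}$ absorbing the smooth ratio). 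Hence $\log|{\bf x}(t)-{\bf x}(\tau)| = \tfrac12\log\!\big(4\sin^2\frac{t-\tau}{2}\big) + \tfrac12\log\chi(t,\tau)$, the second term being smooth. Splitting $J_0(z)^2$-type factors and the entire part into their Taylor series in $z^2$, and noting $z^{2m} = (k^2)^m \big(4\sin^2\frac{t-\tau}{2}\big)^m \chi^m$, one obtains
\[
\phi_k({\bf x}(t)-{\bf x}(\tau)) = \widetilde A(t,\tau)\log\!\Big(4\sin^2\tfrac{t-\tau}{2}\Big) + \widetilde B(t,\tau),
\]
with $\widetilde A,\widetilde B$ smooth bi-periodic (the series converge uniformly with all derivatives since $|{\bf x}(t)-{\bf x}(\tau)|$ is bounded). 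This is exactly the claimed form \eqref{eq:01:lemma:2.2} with $j=1$, giving $\V_k=\V_k^{(1)}\in\mathrm{OPS}(-1)$; continuity $H^s\to H^{s+1}$ follows because the log-kernel operator is $\Lambda_1$ modulo smoothing (see below) and $\Lambda_1=\tfrac12\HH\D_{-1}\in\mathrm{OPS}(-1)$, while the smooth-kernel part is in $\mathrm{OPS}(-\infty)$.

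Next I would peel off the leading asymptotics. Write $\widetilde A(t,\tau) = \widetilde A(t,t) + \big(\widetilde A(t,\tau)-\widetilde A(t,t)\big)$; the first piece is $\widetilde A(t,t) = -\tfrac{1}{4\pi}$ (the value forced by $\phi_k \sim -\tfrac{1}{2\pi}\log r$), so $\widetilde A(t,t)\log\!\big(4\sin^2\frac{t-\tau}{2}\big)$ contributes the convolution operator $-\tfrac{1}{4\pi}\cdot 2\cdot(-2\pi)\,\Lambda_1 = \Lambda_1$ after matching the normalization of $\rho_1$ in \eqref{eq:def:rhor} (here I use $\rho_1(\tau) = -\log(2|\sin\frac{\tau}{2}|)$, so $\log(4\sin^2\frac{\tau}{2}) = -2\rho_1(\tau)$). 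The remainder $\widetilde A(t,\tau)-\widetilde A(t,t)$ vanishes on the diagonal, hence factors as $(e_1(t-\tau)-1)\,A^{(3)}(t,\tau)$ with $A^{(3)}$ smooth bi-periodic (again Taylor in $t-\tau$, trading the vanishing factor for $e_1(t-\tau)-1$), and similarly one can always extract one more power of $(e_1-1)$ from the log-coefficient by the same device. This yields $\V_k = \Lambda_1 + \V_k^{(3)}$ with $\V_k^{(3)}$ of the form \eqref{eq:01:lemma:2.2} with $j=3$; that it lies in $\mathrm{OPS}(-3)$ follows from the fact that a kernel of the form $C(t,\tau)(e_1(t-\tau)-1)^{r-1}\log(4\sin^2\frac{t-\tau}{2})$ defines an operator equal to $\Lambda_r$ composed/multiplied by smooth functions modulo $\mathrm{OPS}(-\infty)$, and $\Lambda_r\in\mathrm{OPS}(-r)$ since $\widehat\rho_r(n)=O(|n|^{-r})$ by the explicit formula $\widehat\rho_r(n)=\tfrac{1}{2r}\binom{n}{r}^{-1}\sign n$ already recorded. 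The identity $\Lambda_1=\tfrac12\HH\D_{-1}$ is \eqref{eq:HD-1}.

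For the last, finest expansion I would carry the Taylor expansion of $\widetilde A$ in $\tau$ around $t$ one order further. The $z^2$-term of $\tfrac{i}{4}H_0^{(1)}(z)$, which is $\tfrac{1}{2\pi}\cdot(-\tfrac{z^2}{4})\log z + \text{smooth}$, produces a diagonal contribution $\tfrac{k^2}{4}\cdot(\text{const})\cdot\Lambda_3$; tracking the constant and the sign against \eqref{eq:def:rhor} and \eqref{eq:fourier:rho} gives precisely the coefficient $\tfrac{k^2}{4}$ in front of $\Lambda_3$, so $\V_k = \Lambda_1 + \tfrac{k^2}{4}\Lambda_3 + \V_k^{(4)}$ with $\V_k^{(4)}\in\mathrm{OPS}(-4)$ of the stated kernel form. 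Finally, rewriting $\Lambda_3$ via $\widehat\rho_3(n) = -\widehat{(\HH\D_{-3})}(n) + O(|n|^{-5})$ — which one checks from $\widehat\rho_3(n)=\tfrac{1}{|n(n-1)(n-2)|}$ versus $\tfrac{\sign n}{2\pi^3}\cdot$(the multiplier of $\HH\D_{-3}$), expanding $\tfrac{1}{n(n-1)(n-2)} = \tfrac{1}{n^3} + O(n^{-5})$ — delivers the displayed splitting $\tfrac{k^2}{4}\Lambda_3 = -\tfrac{k^2}{4}\HH\D_{-3} + \tfrac{k^2}{4}(\Lambda_3+\HH\D_{-3})$ with the parenthesized operator in $\mathrm{OPS}(-5)$.

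The main obstacle is bookkeeping rather than conceptual: one must (i) justify that all the series in $z^2=k^2|{\bf x}(t)-{\bf x}(\tau)|^2$ converge together with all their $(t,\tau)$-derivatives, uniformly on the torus, so that the resulting $A^{(j)},B^{(j)}$ really are $C^\infty$ bi-periodic — this is routine because $|{\bf x}(t)-{\bf x}(\tau)|$ is smooth and bounded and $\log(4\sin^2\frac{t-\tau}{2})\in L^p$ for all $p$ with the singular set a measure-zero diagonal; and (ii) pin down the numerical constants ($-\tfrac{1}{4\pi}$, the factor $2$ from $\log(4\sin^2) = 2\log(2|\sin|)$, the $-2\pi$ from the Fourier normalization of $\rho_r$, and the $\tfrac{k^2}{4}$) so that the convolution parts come out exactly as $\Lambda_1$ and $\tfrac{k^2}{4}\Lambda_3$, and that the leftover $(e_1-1)$-factored kernels have the exact exponents $j-1$ claimed. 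Once the factorization $|{\bf x}(t)-{\bf x}(\tau)|^2 = 4\sin^2\frac{t-\tau}{2}\,\chi(t,\tau)$ and the mapping property $\Lambda_r\in\mathrm{OPS}(-r)$ are in hand, everything else is term-by-term matching, and I would organize it as: (a) establish the $\chi$-factorization; (b) insert the Hankel series and collect the $\log$-coefficient and the smooth remainder to get the $j=1$ form; (c) Taylor-expand the $\log$-coefficient on the diagonal successively, each time peeling one $\Lambda_r$ and raising $j$ by the appropriate amount, to get the $j=3$ and $j=4$ forms; (d) convert $\Lambda_1,\Lambda_3$ to $\HH\D_{-1},\HH\D_{-3}$ via \eqref{eq:HD-1} and the binomial expansion of $\widehat\rho_3$.
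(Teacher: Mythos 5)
Your route is the same as the paper's: the paper's proof is essentially a pointer to the splitting $\phi_k(\x)=-\tfrac{1}{4\pi}J_0(k|\x|)\log|\x|^2+C(|\x|)$ and to the classical Kussmaul--Martensen/Kress kernel decomposition, together with the standard mapping properties of logarithmic kernels, which is exactly the program you carry out by hand. Most of your bookkeeping is sound: the factorization $|{\bf x}(t)-{\bf x}(\tau)|^2=4\sin^2\tfrac{t-\tau}{2}\,\chi(t,\tau)$, the identification of the diagonal log-coefficient $\widetilde A(t,t)=-\tfrac1{4\pi}$ with the convolution $\Lambda_1$, and the use of $\widehat\rho_r(n)=O(|n|^{-r})$ to read off the orders. (The sign in your opening display should be $-\tfrac{1}{2\pi}\log z\,J_0(z)$ plus an entire even function; since you later use the correct value $-\tfrac1{4\pi}$, this is only a slip.) Two steps, however, need repair.

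First, the passage from $j=1$ to $j=3$: subtracting the diagonal value and extracting a single factor $(e_1(t-\tau)-1)$ only produces a kernel of the $j=2$ type, i.e.\ an operator of order $-2$, and the remark that ``one can always extract one more power of $(e_1-1)$'' is not a legitimate device --- each further extraction leaves behind a diagonal Fourier-multiplier contribution of the type $c(t)\Lambda_2$. What actually gives the $j=3$ form is the specific structure of the coefficient: $\widetilde A(t,\tau)=-\tfrac1{4\pi}J_0(k\,r(t,\tau))$ with $J_0$ even, hence a smooth function of $r^2$, and $r^2=4\sin^2\tfrac{t-\tau}{2}\,\chi=-e_{-1}(t-\tau)\,(e_1(t-\tau)-1)^2\chi$ vanishes to second order on the diagonal; therefore $\widetilde A(t,\tau)-\widetilde A(t,t)$ automatically carries the factor $(e_1(t-\tau)-1)^2$ and no $\Lambda_2$-term appears. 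This is the same evenness you exploit one paragraph later for the $z^2$ term, and it must be invoked already here (the subsequent coefficient check does work out: the diagonal value of the coefficient of $(e_1-1)^2\log(4\sin^2\tfrac{t-\tau}{2})$ is $-\tfrac{k^2}{16\pi}$ and $(e_1-1)^2\log(4\sin^2\tfrac{\cdot}{2})=-2\rho_3$, yielding exactly $\tfrac{k^2}{4}\Lambda_3$). Second, your final verification is arithmetically false: $\tfrac{1}{n(n-1)(n-2)}=\tfrac{1}{n^3}+\tfrac{3}{n^4}+O(n^{-5})$, and the analogous computation for $n<0$ shows that the symbol of $\Lambda_3+\HH\D_{-3}$ behaves like $3\,\sign(n)\,|n|^{-4}$; so this argument only gives $\Lambda_3+\HH\D_{-3}\in\mathrm{OPS}(-4)$, not $\mathrm{OPS}(-5)$ (an $O(|n|^{-5})$ agreement would require the symmetric factor $4\sin^2\tfrac{\cdot}{2}$, whose large-$n$ coefficients are $\tfrac{1}{n(n^2-1)}$, in place of $(e_1-1)^2$). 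Since only the $\mathrm{OPS}(-4)$ property of the remainder is used afterwards (see the remark immediately following the lemma), the expansion you obtain still suffices for the rest of the analysis, but as a justification of the displayed $\mathrm{OPS}(-5)$ claim the step fails as written.
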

\begin{proof} The proof is based on the decomposition of the fundamental solution of the Helmholtz equation
\[
 \phi_k(\bm{x})  = -\frac{1}{4\pi} J_0(k|\bm{x}|)\log|\bm{x}|^2+C(|{\bm x}|)
\]
with $J_0$ the Bessel function of first kind and order zero and $C$ a smooth function.
We refer the reader to \cite[\S 10.4]{hsiao2008boundary} and \cite{DomTurc:2023} or  \cite[Ch. 12]{Kress} for the decomposition of the integral operator.
 It is a well-established result that integral operators of this kind define operators of order $-j$ (see, for example,  \cite[Ch. 7]{Saranen}).
\end{proof}

As consequence, we derive the following expansion for the single layer operator
\[
   \V_k = \frac{1}2\HH\D_{-1}-  \frac{k^2}4\HH\D_{-3} +\mathrm{OPS}(-4).
\]
Actually, it can be shown with little effort that the remainder is of order $-5$. However, we do not use this additional regularity order.

\begin{lemma}\label{lemma:2.3}
 It holds
 \begin{eqnarray*}
  \W_k &=& \frac{1}2\HH\D+\frac{k^2}4\HH\D_{-1} +\underbrace{
\frac{k^2}4(\D\Lambda_3\D+ \HH\D_{-1})}_{\in\mathrm{OPS}(-3)}+ \W_k^{(2)}
 \end{eqnarray*}
 where
 \[
\W_{k}^{(2)}  :=  \D\V_{k}^{(4)} \D +k^2\widetilde{\V}_{k}^{(3)} \in\mathrm{OPS}(-2)
 \]
 with
 \begin{equation}\label{eq:02:lemma:2.3}
  \widetilde{\V}_{k}^{(3)}{\varphi} = \int_{0}^{2\pi } \widetilde{A}^{(3)}(\cdot,\tau)
  (e_1(\cdot-\tau)-1)^2\log \left(4\sin^2 \frac{\cdot -\tau}2\right) \varphi(\tau)\,{\rm d}\tau +\int_{0}^{2\pi } \widetilde{B}^{(3)}(\cdot,\tau) \varphi(\tau)\,{\rm d}\tau
 \end{equation}
and
 \begin{eqnarray*}
 \widetilde{A}^{(3)}(t,\tau) &:=&  A^{(3)}(t,\tau)\bm{t}(t)\cdot\bm{t} (\tau)+
  \frac{k^2}{4\pi  (e_1(t-\tau)-1)^2}(1- \ttt(t)\cdot \ttt(\tau) ),\\
 \widetilde{B}^{(3)}(t,\tau) &:=&B^{(3)}(t,\tau)\bm{t}(t)\cdot\bm{t} (\tau).
 \end{eqnarray*}
 are smooth and biperiodic.
\end{lemma}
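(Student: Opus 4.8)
The plan is to start from Maue's identity~\eqref{eq:W}, which circumvents the strongly singular kernel of $\W_k$ entirely: after parametrization and the identification $\partial_\ttt\leftrightarrow\D$ (recall also~\eqref{eq:dt:SL}), it expresses $\W_k$ as $\D\V_k\D$ plus a $k^2$-multiple of the scalar integral operator $\varphi\mapsto\int_0^{2\pi}\phi_k(\x(\cdot)-\x(\tau))\,\ttt(\cdot)\cdot\ttt(\tau)\,\varphi(\tau)\,\mathrm{d}\tau$. Into the first term I substitute $\V_k=\Lambda_1+\tfrac{k^2}4\Lambda_3+\V_k^{(4)}$ from Lemma~\ref{lemma:2.2}, so that $\D\V_k\D=\D\Lambda_1\D+\tfrac{k^2}4\D\Lambda_3\D+\D\V_k^{(4)}\D$. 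Since $\D$, $\HH$, $\Lambda_1=\tfrac12\HH\D_{-1}$ and $\Lambda_3$ are Fourier multipliers they commute, and elementary symbol bookkeeping — using $\D\D_{-1}=\I-\J$, $\D\J=\J\D=0$ and $\D\HH\D_{-3}\D=\HH\D_{-1}$ — yields $\D\Lambda_1\D=\tfrac12\HH\D$ and, invoking $\Lambda_3=-\HH\D_{-3}$ modulo $\mathrm{OPS}(-5)$ (Lemma~\ref{lemma:2.2}), $\D\Lambda_3\D=-\HH\D_{-1}+\D(\Lambda_3+\HH\D_{-3})\D$ with the last summand in $\mathrm{OPS}(-3)$; finally $\D\V_k^{(4)}\D\in\mathrm{OPS}(-2)$ since $\V_k^{(4)}\in\mathrm{OPS}(-4)$. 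This pins down the $\mathrm{OPS}(1)$, $\mathrm{OPS}(-1)$ and $\mathrm{OPS}(-3)$ parts coming from $\D\V_k\D$.

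For the geometric term I write $\ttt(t)\cdot\ttt(\tau)=1-\bigl(1-\ttt(t)\cdot\ttt(\tau)\bigr)$. The constant $1$ reproduces a $k^2$-multiple of $\V_k$, whose $\mathrm{OPS}(-1)$ part is proportional to $\Lambda_1=\tfrac12\HH\D_{-1}$ and combines with the $-\tfrac{k^2}4\HH\D_{-1}$ already extracted from $\tfrac{k^2}4\D\Lambda_3\D$ to leave the net term $\tfrac{k^2}4\HH\D_{-1}$ of the statement. The remaining part is the integral operator whose kernel is a $k^2$-multiple of $\phi_k(\x(\cdot)-\x(\tau))\bigl(1-\ttt(\cdot)\cdot\ttt(\tau)\bigr)$, and here the decisive point is that, for an arc-length parametrization, $\ttt=\x'$ with $\x'\cdot\x'\equiv1$ forces $\x'\cdot\x''\equiv0$, so $1-\ttt(t)\cdot\ttt(\tau)$ vanishes to second order on the diagonal and $(e_1(t-\tau)-1)^{-2}\bigl(1-\ttt(t)\cdot\ttt(\tau)\bigr)$ is smooth and biperiodic. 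Inserting the Kussmaul-Martensen splitting $\phi_k(\x(t)-\x(\tau))=-\tfrac1{4\pi}\log\bigl(4\sin^2\tfrac{t-\tau}2\bigr)+A^{(3)}(t,\tau)(e_1(t-\tau)-1)^2\log\bigl(4\sin^2\tfrac{t-\tau}2\bigr)+B^{(3)}(t,\tau)$ supplied by Lemma~\ref{lemma:2.2} (and absorbing the smooth nonvanishing factor relating $4\sin^2\tfrac{t-\tau}2$ and $(e_1(t-\tau)-1)^2$ into the coefficients), one verifies that this remainder has the log-singular-plus-smooth form~\eqref{eq:02:lemma:2.3} with the stated $\widetilde{A}^{(3)}$ and $\widetilde{B}^{(3)}$, and therefore lies in $\mathrm{OPS}(-3)$.

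It then remains to collect by pseudodifferential order: the $\mathrm{OPS}(1)$ part is $\tfrac12\HH\D$, the $\mathrm{OPS}(-1)$ part is $\tfrac{k^2}4\HH\D_{-1}$, the $\mathrm{OPS}(-3)$ Fourier multiplier $\tfrac{k^2}4\bigl(\D\Lambda_3\D+\HH\D_{-1}\bigr)=\tfrac{k^2}4\D(\Lambda_3+\HH\D_{-3})\D$ is kept on display, and everything left over — the $\D\V_k^{(4)}\D$ contribution of $\D\V_k\D$ together with $k^2$ times the $\mathrm{OPS}(-3)$ remainder of the geometric term — is gathered into $\W_k^{(2)}=\D\V_k^{(4)}\D+k^2\widetilde{\V}_k^{(3)}$, which lies in $\mathrm{OPS}(-2)$ because the second summand lies in $\mathrm{OPS}(-3)\subset\mathrm{OPS}(-2)$. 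I expect the main obstacle to be exactly this last bookkeeping: no single hard estimate is involved, but one must track contributions of several different orders issuing from the two distinct sources — the composition $\D\V_k\D$ on one side and the geometric operator $\ttt\cdot\V_k[\ttt\,\cdot\,]$ on the other — and check that, after the cancellations, precisely the stated principal and subprincipal Fourier multipliers survive while the explicit kernel of $\widetilde{\V}_k^{(3)}$ emerges as in~\eqref{eq:02:lemma:2.3}.
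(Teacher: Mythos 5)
Your proposal is correct and follows essentially the same route as the paper: Maue's formula \eqref{eq:W} combined with the splittings of $\V_k$ from Lemma \ref{lemma:2.2}, with the geometric term $\ttt\cdot\V_k[\ttt\,\cdot\,]$ tamed by writing $\ttt(t)\cdot\ttt(\tau)=1-(1-\ttt(t)\cdot\ttt(\tau))$ and using the second-order vanishing of $1-\ttt(t)\cdot\ttt(\tau)$ on the diagonal (which is exactly the paper's term $\ttt\cdot\HH\D_{-1}[\ttt\,\cdot\,]-\HH\D_{-1}$). Your write-up merely fills in the Fourier-multiplier bookkeeping that the paper leaves implicit, and your identification of the kernel $\widetilde{A}^{(3)},\widetilde{B}^{(3)}$ and of the diagonal value involving $\kappa^2$ agrees with the paper (up to the paper's stray constant in $\widetilde{A}^{(3)}$).
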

\begin{proof} It is consequence of the Maue's formula \eqref{eq:W} which in this context can be written as
\begin{eqnarray*}
 \W_k &=& \D \V_k\D +k^2 \ttt\cdot \V_k[\ttt\cdot] \\
 &&=
 \frac{1}2\HH\D+\frac{k^2}4\HH\D_{-1}+ \D\V_{k}^{(4)} \D +
 \ttt\cdot \V_{k}^{(3)}[\ttt\cdot]+\frac{k^2}2 \left(\ttt \cdot \HH\D_{-1}[\ttt \cdot]-\HH\D_{-1} \right){,}
\end{eqnarray*}
with \eqref{eq:def:rhor} and Lemma \ref{lemma:2.2}.
\end{proof}

Notice that
\[
 \widetilde{A}^{(3)}(t,t) = A^{(3)}(t,t) -\frac{k^2}{8\pi} \kappa^2(t)
\]
where  $\kappa$ is the signed curvature given by
\begin{equation}\label{eq:def:kappa}
  \kappa = -\nnn\cdot \partial_{\ttt} \ttt =|{\bf x}''|.
 \end{equation}

\begin{lemma}\label{lemma:2.4} The   adjoint double layer operator and double layer operator can be written as
\begin{eqnarray*}
\mathrm{K}_{k}^\top    \varphi &:=& \int_{0}^{2\pi } C(\cdot,\tau)  (e_1(\cdot-\tau)-1)^2\log \left(4 \sin^2 \frac{\cdot -\tau}2\right) \varphi(\tau)\,{\rm d}\tau +\int_{0}^{2\pi } D(\cdot,\tau) \varphi(\tau)\,{\rm d}\tau,
 \\
 \mathrm{K}_{k} \varphi &:=& \int_{0}^{2\pi } C(\tau,\cdot)
 (e_1( \tau-\cdot )-1)^2\log \left(4 \sin^2 \frac{\cdot -\tau}2\right) \varphi(\tau)\,{\rm d}\tau +\int_{0}^{2\pi } D(\tau,\cdot) \varphi(\tau)\,{\rm d}\tau
\end{eqnarray*}
with $C$ and $D$ smooth, which implies that $\K,\ \K^\top\in\mathrm{OPS}(-3)$.
\end{lemma}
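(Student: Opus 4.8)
\emph{Proof proposal.}
The plan is to differentiate the Kussmaul--Martensen splitting of the fundamental solution already exploited in the proof of Lemma~\ref{lemma:2.2},
\[
 \phi_k(\x) = -\frac{1}{4\pi}\,J_0(k|\x|)\log|\x|^2 + C_0(\x),
\]
with $J_0$ the Bessel function of order zero and $C_0$ smooth on $\mathbb{R}^2$ (and even), and to read off the parametrized kernels. For $\K_k^\top$ one needs $\partial_{\nnn(\x)}\phi_k(\x-\y)=\nabla_\x\phi_k(\x-\y)\cdot\nnn(\x)$, which splits into a logarithmic term $-\frac{k}{4\pi}\,\dfrac{J_0'(k|\x-\y|)}{|\x-\y|}\big((\x-\y)\cdot\nnn(\x)\big)\log|\x-\y|^2$, an a priori strongly singular term $-\dfrac{1}{2\pi}\,J_0(k|\x-\y|)\,\dfrac{(\x-\y)\cdot\nnn(\x)}{|\x-\y|^2}$, and a smooth contribution $\partial_{\nnn(\x)}C_0(\x-\y)$. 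Since $J_0'=-J_1$ and the maps $z\mapsto J_1(z)/z$ and $z\mapsto J_0(z)$ are analytic and even, the functions $J_1(k|{\bf x}(t)-{\bf x}(\tau)|)/(k|{\bf x}(t)-{\bf x}(\tau)|)$ and $J_0(k|{\bf x}(t)-{\bf x}(\tau)|)$ — being analytic functions of the smooth biperiodic quantity $|{\bf x}(t)-{\bf x}(\tau)|^2$ — are smooth and biperiodic.

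\emph{Geometric identities.} I would then invoke two facts of exactly the nature used in Lemma~\ref{lemma:2.2} and in \cite[\S10.4]{hsiao2008boundary}, \cite{DomTurc:2023}. First, since $\nnn(t)\perp{\bf x}'(t)$, a Taylor expansion with remainder gives $({\bf x}(t)-{\bf x}(\tau))\cdot\nnn(t)=(e_1(t-\tau)-1)^2\,a(t,\tau)$ with $a$ smooth and biperiodic (one uses that $e_1(t-\tau)-1$ has a simple zero exactly at $t\equiv\tau\ (\mathrm{mod}\ 2\pi)$). Second, injectivity and regularity of ${\bf x}$ give $|{\bf x}(t)-{\bf x}(\tau)|^2=|e_1(t-\tau)-1|^2\,\chi(t,\tau)$ with $\chi$ smooth, biperiodic and strictly positive. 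The decisive cancellation is
\[
 \frac{(e_1(\theta)-1)^2}{|e_1(\theta)-1|^2}=\frac{-4\,e^{i\theta}\sin^2(\theta/2)}{4\sin^2(\theta/2)}=-e_1(\theta),
\]
so that $\dfrac{({\bf x}(t)-{\bf x}(\tau))\cdot\nnn(t)}{|{\bf x}(t)-{\bf x}(\tau)|^2}=-e_1(t-\tau)\,\dfrac{a(t,\tau)}{\chi(t,\tau)}$ is smooth and biperiodic; hence the seemingly strongly singular middle term is in fact a smooth kernel, and nothing of order $0$ survives — this is why $\K_k^\top$ and $\K_k$ land in $\mathrm{OPS}(-3)$ rather than $\mathrm{OPS}(0)$.

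\emph{Assembling the kernel.} Writing $\log|{\bf x}(t)-{\bf x}(\tau)|^2=\log\!\big(4\sin^2\tfrac{t-\tau}{2}\big)+\log\chi(t,\tau)$, the logarithmic term of $\partial_{\nnn(\x)}\phi_k$ becomes $(e_1(t-\tau)-1)^2$ times a smooth biperiodic factor times $\log(4\sin^2\tfrac{t-\tau}{2})+\log\chi(t,\tau)$. Collecting the coefficient of $(e_1(\cdot-\tau)-1)^2\log(4\sin^2\tfrac{\cdot-\tau}{2})$ into $C(\cdot,\tau)$, and all remaining smooth pieces (the $\log\chi$ contribution, the $\partial_{\nnn}C_0$ term, and the now-regular strongly singular term) into $D(\cdot,\tau)$, yields the stated representation of $\K_k^\top$. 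For $\K_k$ I would not repeat the computation: since $\phi_k$ is even, its kernel $\partial_{\nnn(\y)}\phi_k(\x-\y)$ at $(\x,\y)$ coincides with the kernel $\partial_{\nnn(\x)}\phi_k(\x-\y)$ of $\K_k^\top$ evaluated at $(\y,\x)$, so in the parametrization the kernel of $\K_k$ at $(t,\tau)$ equals that of $\K_k^\top$ at $(\tau,t)$; using $\log(4\sin^2\tfrac{\tau-t}{2})=\log(4\sin^2\tfrac{t-\tau}{2})$ this gives precisely the second formula with $C(\tau,\cdot)$, $D(\tau,\cdot)$ and $(e_1(\tau-\cdot)-1)^2$. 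Finally, an operator whose kernel is $(\text{smooth})\cdot(e_1(\cdot-\tau)-1)^2\log(4\sin^2\tfrac{\cdot-\tau}{2})$ plus a smooth kernel is a pseudodifferential operator of order $-3$, since $\widehat\rho_3(n)=O(|n|^{-3})$ (cf.\ \eqref{eq:fourier:rho}); I would cite the same mapping result \cite[Ch.~7]{Saranen} used in Lemma~\ref{lemma:2.2}, concluding $\K_k,\K_k^\top\in\mathrm{OPS}(-3)$.

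\emph{Main obstacle.} The only genuine content is the two ``division'' statements — smoothness and biperiodicity of $a$ and $\chi$, with $\chi>0$ — together with the identity $(e_1-1)^2/|e_1-1|^2=-e_1$ that converts what looks like a Cauchy-type kernel into a smooth one; everything else is bookkeeping identical to the single-layer case. These division lemmas are standard in the Kussmaul--Martensen framework \cite[Ch.~12]{Kress} and are already implicit in Lemma~\ref{lemma:2.2}, so in the write-up I would state them once and reuse them. The one point requiring care is the asymmetric placement of the arguments of $C$ and $D$ in the two formulas, which simply encodes whether the normal derivative falls on the $\x$- or the $\y$-variable.
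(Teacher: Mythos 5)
Your proposal is correct and follows exactly the route the paper relies on (but does not spell out): the same Kussmaul--Martensen splitting of $\phi_k$ used for Lemma~\ref{lemma:2.2}, the standard division lemmas giving $({\bf x}(t)-{\bf x}(\tau))\cdot\nnn(t)=(e_1(t-\tau)-1)^2a(t,\tau)$ and $|{\bf x}(t)-{\bf x}(\tau)|^2=|e_1(t-\tau)-1|^2\chi(t,\tau)$, and the mapping result of \cite[Ch.~7]{Saranen} for kernels of the form $(\text{smooth})\cdot(e_1(\cdot-\tau)-1)^{2}\log\bigl(4\sin^2\tfrac{\cdot-\tau}{2}\bigr)$, as in \cite[\S 10.4]{hsiao2008boundary}, \cite[Ch.~12]{Kress}. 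Your cancellation identity $(e_1-1)^2/|e_1-1|^2=-e_1$ and the evenness argument transferring the result from $\K_k^\top$ to $\K_k$ are both sound, so the write-up correctly supplies the details the paper leaves to the references.
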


Finally, we analyze the integral operator
\begin{eqnarray*}
 [{\V}_{k,\ttt,\nnn} \varphi](t)  &=&  \ttt(t)\cdot \V_{k}[\nnn\varphi](t){.}
 \end{eqnarray*}

 \begin{lemma}\label{lemma:2.5}
  It holds
 \begin{equation}\label{eq:01:lemma:2.5}
{\V}_{k,\ttt,\nnn}  \varphi  =  -\frac{1}{2\pi}  \int_{0}^{2\pi} {A}_{\ttt}(\cdot,\tau)(e_1(\cdot-\tau)-1)\log\left(4\sin^2\frac{\cdot-\tau}2\right) \varphi(\tau)\,{\rm d}\tau
+\int_{0}^{2\pi} {B}_{\ttt} (\cdot,\tau)  \varphi(\tau)\,{\rm d}\tau
 \end{equation}
where
\begin{eqnarray*}
{A}_{\ttt}(t,\tau) &:=&
\frac{1}{e_1(t-\tau)-1} (\ttt(t)\cdot \nnn(\tau))
A^{(1)}(t,\tau), \\
{B}_{\ttt}(t,\tau) &:=& B^{(1)}(t,\tau)(\ttt(t)\cdot \nnn(\tau)),
\end{eqnarray*}
with $A^{(1)}$ and $B^{(1)}$ as in Lemma \ref{lemma:2.2}. Furthermore, ${\V}_{k,\ttt,\nnn} \in\mathrm{OPS}(-2)$.
 \end{lemma}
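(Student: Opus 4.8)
The plan is to reduce the claim to Lemma \ref{lemma:2.2} by a direct manipulation of kernels, exploiting the fact that $\V_{k,\ttt,\nnn}$ is obtained from the parameterized single layer operator $\V_k$ by inserting $\nnn(\tau)$ against the density and pairing with $\ttt(t)$. First I would write out, using the notation $(\V_k^{(1)}\varphi)(t)=\V_k\varphi(t)$ of Lemma \ref{lemma:2.2}, the kernel of $\V_k$ with its logarithmic splitting, namely
\[
\phi_k({\bf x}(t)-{\bf x}(\tau)) = A^{(1)}(t,\tau)\log\left(4\sin^2\tfrac{t-\tau}2\right) + B^{(1)}(t,\tau),
\]
with $A^{(1)},B^{(1)}$ smooth and biperiodic (this is exactly the $j=1$ case of \eqref{eq:01:lemma:2.2}, noting $(e_1(t-\tau)-1)^{0}=1$). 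Substituting this into $[\V_{k,\ttt,\nnn}\varphi](t)=\ttt(t)\cdot\int_0^{2\pi}\phi_k({\bf x}(t)-{\bf x}(\tau))\,\nnn(\tau)\varphi(\tau)\,{\rm d}\tau$ immediately produces the two integrals in \eqref{eq:01:lemma:2.5}, provided I identify the prefactors: the coefficient multiplying the logarithmic kernel becomes $(\ttt(t)\cdot\nnn(\tau))A^{(1)}(t,\tau)$ and the smooth-kernel coefficient becomes $(\ttt(t)\cdot\nnn(\tau))B^{(1)}(t,\tau)$. The only cosmetic point is the factor $(e_1(t-\tau)-1)$ that the statement pulls out: one writes $(\ttt(t)\cdot\nnn(\tau))A^{(1)}(t,\tau) = \bigl(e_1(t-\tau)-1\bigr)\cdot {A}_\ttt(t,\tau)$ with ${A}_\ttt$ as defined in the statement, so this is just a rearrangement, not a new estimate, modulo the $-\frac1{2\pi}$ normalization that matches the convention in \eqref{eq:def:rhor}.

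The one genuine analytic point — and the step I expect to be the crux — is that the rearranged coefficient ${A}_\ttt(t,\tau) = \frac{1}{e_1(t-\tau)-1}(\ttt(t)\cdot\nnn(\tau))A^{(1)}(t,\tau)$ is in fact \emph{smooth and biperiodic}, not merely smooth away from the diagonal. This requires observing that $\ttt(t)\cdot\nnn(\tau)$ vanishes on the diagonal $t=\tau$ (since $\ttt(t)\cdot\nnn(t)=0$) to exactly first order, and that $e_1(t-\tau)-1 = e^{i(t-\tau)}-1$ also vanishes to exactly first order on the diagonal with non-vanishing derivative; hence the quotient extends smoothly across $t=\tau$ by the standard "division of a smooth function vanishing on a submanifold by a defining function" argument (Hadamard's lemma applied in the variable $t-\tau$, using periodicity to patch globally). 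I would spell this out: write $\ttt(t)\cdot\nnn(\tau) = \int_0^1 \frac{d}{d\sigma}\bigl(\ttt(\tau+\sigma(t-\tau))\cdot\nnn(\tau)\bigr)\,{\rm d}\sigma\cdot(t-\tau) = (t-\tau)g(t,\tau)$ with $g$ smooth and biperiodic, and similarly $e_1(t-\tau)-1 = (t-\tau)h(t-\tau)$ with $h$ smooth, $2\pi$-periodic, and nowhere zero; the quotient is then $g(t,\tau)/h(t-\tau)\cdot A^{(1)}(t,\tau)$, manifestly smooth and biperiodic.

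Finally, the mapping property $\V_{k,\ttt,\nnn}\in\mathrm{OPS}(-2)$ follows from the general principle — cited already in the proof of Lemma \ref{lemma:2.2} from \cite[Ch. 7]{Saranen} and \cite[\S 10.4]{hsiao2008boundary} — that a periodic integral operator whose kernel is of the form $A(t,\tau)(e_1(t-\tau)-1)^{m}\log(4\sin^2\tfrac{t-\tau}2) + B(t,\tau)$ with $A,B$ smooth biperiodic is a pseudodifferential operator of order $-(m+2)$; here $m=1$, giving order $-3$... but one must be careful: the presence of the extra factor $(\ttt(t)\cdot\nnn(\tau))$ already supplies one order of vanishing, yet it has been absorbed into ${A}_\ttt$, so the effective exponent on $(e_1(\cdot-\tau)-1)$ in \eqref{eq:01:lemma:2.5} is $m=1$ and the order is $-3$? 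The statement claims $-2$, which is the correct reading once one accounts for the fact that $\ttt(t)\cdot\nnn(\tau)$, unlike a pure power of $(e_1(t-\tau)-1)$, does \emph{not} gain a full order in the symbolic sense — it is the product structure that matters, and here the honest count is: base order $-1$ from $\Lambda_1$-type kernel, minus one more order from the single vanishing factor absorbed, giving $-2$. I would therefore invoke the cited results in the form: the operator with kernel $\widetilde A(t,\tau)(e_1(t-\tau)-1)\log(4\sin^2\tfrac{t-\tau}2)+\widetilde B(t,\tau)$ is in $\mathrm{OPS}(-2)$, which is precisely \eqref{eq:01:lemma:2.2} with $j=2$ up to the harmless relabeling, completing the proof.
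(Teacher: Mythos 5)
Your proposal is correct and follows essentially the paper's own (one-line) argument: substitute the $j=1$ kernel splitting of Lemma \ref{lemma:2.2} into the definition $[\V_{k,\ttt,\nnn}\varphi](t)=\ttt(t)\cdot\V_k[\nnn\varphi](t)$, use the first-order vanishing of $\ttt(t)\cdot\nnn(\tau)$ across the (periodic) diagonal to conclude that $A_{\ttt}$ extends to a smooth biperiodic function, and identify the resulting kernel with the $j=2$ form in \eqref{eq:01:lemma:2.2}. The only slip is in your final paragraph: the rule encoded in Lemma \ref{lemma:2.2} is that a kernel $A(t,\tau)\,(e_1(t-\tau)-1)^{j-1}\log\bigl(4\sin^2\frac{t-\tau}{2}\bigr)+B(t,\tau)$ with $A,B$ smooth biperiodic gives an operator of order $-j$ (compare $\Lambda_j$ and \eqref{eq:fourier:rho}), not $-(m+2)$ with $m=j-1$, so one factor of $e_1-1$ yields order $-2$ directly and there is no genuine subtlety about $\ttt\cdot\nnn$ ``not gaining a full order'' (your initial count simply misstated the rule, and your closing identification with the $j=2$ case is the correct and sufficient argument); likewise $(e_1(s)-1)/s$ is not periodic and vanishes at $s=\pm 2\pi$, but your remark about patching by periodicity already repairs that point.
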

\begin{proof}
 Straightforward from the definition, Lemma \ref{lemma:2.3} and \eqref{eq:HD-1}.
\end{proof}


Let us point out that in this case
\[
   {A}_{\ttt}(t,t) = -\frac{i}{2\pi}\kappa(t).
\]

\subsection{Arc-length parameterized regularized integral equations}

We are now in the position to construct the DtN {(Dirichlet-to-Neumann)} approximations as well as the regularizing operators featured in equations~\eqref{eq:AR}.

In view of the results of the previous subsection, we have that the (arclength-) parameterized versions of the operators    $\mathcal{A}_{\rm DL,\Gamma},\ \mathcal{A}_{\rm SL,\Gamma}$ are given by
  \begin{equation}\label{eq:ADLASL}
  \begin{aligned}
  \mathcal{A}_{\rm DL}\ :=\ &     \frac{1}2\mathcal{H}_0 \HH\D+\frac{1}4
  \begin{bmatrix}
                                         k_{p}^2 & \\
                                            & -k_{s}
                                             \end{bmatrix}\HH\D_{-1}
\
+
\frac14
\begin{bmatrix}
      k_p^2                     &\\
&- k_s^2     \\
    \end{bmatrix}(\D\Lambda_3\D+\HH\D_{-1})
\\
 &\
+ \begin{bmatrix}
     \W^{(2)}_{p}                      &k_s^2 {\V}_{s,\ttt,\nnn}-\K_s^\top \D\\
     k_p^2 {\V}_{p,\ttt,\nnn}-\K^\top_p\D & -\W^{(2)}_{s}\\
    \end{bmatrix},\\
  \mathcal{A}_{\rm SL}\ :=\ & \frac{1}2\mathcal{H}_0
    +\frac{1}4\begin{bmatrix}
                                                    &k_{p}^2 \\
                                              k_{s}^2 &
                                       \end{bmatrix}\HH\D_{-2}
     + \frac14
    \begin{bmatrix}
     &k_s^2\\
     k_p^2&
    \end{bmatrix}
(\Lambda_3+\HH\D_{-3})\D
\\
 &     -
    \begin{bmatrix}
     \K^\top _{p} & \D \V_{s}^{(4)}\\
    \D \V_{p}^{(4)} & - \K^\top_{s}
    \end{bmatrix}
   .
  \end{aligned}
   \end{equation}
In the expressions above
 \begin{equation}\label{eq:defH0}
     \mathcal{H}_0 :=  \begin{bmatrix}
 \mathrm{I}&-\mathrm{H} \\
 -\mathrm{H}&-\mathrm{I}
 \end{bmatrix}
 \end{equation}
 that satisfies $\mathcal{H}_0^2 = 0$ (see the discussion below \eqref{eq:def:C}). That is, the principal part for both $\mathcal{A}_{\rm DL}\in\mathrm{OPS}(1)$ and $\mathcal{A}_{\rm SL}\in\mathrm{OPS}(0)$ is defective.

We turn our analysis now to the DtN approximation operator. The representation formula for the exterior Helmholtz solutions \eqref{eq:2.18} and the jump relations for the boundary layer potentials \eqref{eq:traces} imply
\[
\mathrm{DtN}_k=  2\mathrm{W}_k -2\mathrm{K}_k^\top \mathrm{DtN}_k = 2\mathrm{W}_k +\mathrm{OPS}(-2).
\]
This supports the idea of replacing $\mathrm{DtN}_k$ in regularizer formulations by the hypersingular operator  cf. \cite{MR3343368}. The real wave-number $k$ is typically replaced by  the complexified wavenumber $\widetilde{k} = k+ i\epsilon$ in the definition of approximations to DtN operators in order to ensure the invertibility of the ensuing regularized operators. We follow this idea and, in view of Lemma \ref{lemma:2.3}, we introduce {(see \eqref{eq:def:Ds:and:J} and \eqref{eq:def:H})}
\begin{equation}\label{eq:Yk}
 \mathrm{Y}_k =  \HH\D +\frac{\widetilde{k}^2}2 \HH\D_{-1}
 +\J
\end{equation}
with $\widetilde{k} = k+ i\epsilon$ and $\epsilon>0$. Clearly
\[
 \mathrm{Y}_k: H^s\to H^{s+1}
\]
is invertible, as a Fourier multiplier operator  it suffices to examine its action on $\{e_n\}_{n\in\mathbb{Z}}$,
i.e, $\mathrm{Y}_k\in\mathrm{OPS}(1)$ and
\begin{equation}\label{eq:Yk-2Wk}
\mathrm{Y}_k-2\mathrm{W}_k,\
\mathrm{Y}_k- 2\mathrm{DtN}_k \in \mathrm{OPS}(-1).
\end{equation}
The choice
\[
\mathcal{Y} = \begin{bmatrix}
               \mathrm{\mathrm{Y}_{p}}&\\
                      & \mathrm{Y}_{s}
              \end{bmatrix}
\]
in the representations~\eqref{eq:up_and_us}-\eqref{eq:firstEq}  is then fully justified.

The regularizer operator we will use is suggested by the first  terms in the definition of $\mathcal{A}_{\rm DL}$ and $\mathcal{A}_{\rm SL}$ such as it is presented in
\eqref{eq:ADLASL}, namely
 \begin{equation}\label{eq:R}
  \mathcal{R}:=
\mathcal{H}_0\HH\D
+\frac{1}{2}\begin{bmatrix}
             \widetilde{k}_s^2&\\
                            & -\widetilde{k}_p^2
            \end{bmatrix}\HH\D_{-1}
            +\begin{bmatrix}
             \J&\\
                            &\J
            \end{bmatrix}.
 \end{equation}
 Clearly, $\mathcal{R}\in\mathrm{OPS}(1)$. This choice ensures that $\mathcal{R}$ is an approximate right zero divisor of the principal part of the operator $\mathcal{A}_{\rm comb}$, as will be seen in the following theorem. We refer the reader to \cite{DomTurc:2023} for a thorough analysis of this kind of regularizers. Notice in passing that  the principal part of $\mathcal{R}$ is again  nilpotent:
 Hence
 \[
  \mathcal{R}^2=  \frac12\begin{bmatrix}
 2  \widetilde{k}_s^2\mathrm{I}& (\widetilde{k}_s^2-\widetilde{k}_p^2)\mathrm{H} \\
  (\widetilde{k}_s^2-\widetilde{k}_p^2)\mathrm{H}& 2\widetilde{k}_p^2\mathrm{I}
 \end{bmatrix}(\mathrm{I}-\mathrm{J})
+\begin{bmatrix}
            \J&  \\
                        &     \J
            \end{bmatrix}
            \in\mathrm{OPS}(0)
 \]
 which can be easily seen to be invertible with inverse
 \[
  \frac{2}{(\widetilde{k}_p^2+\widetilde{k}_s^2)^2} \begin{bmatrix}
 2  \widetilde{k}_s^2\mathrm{I}& -(\widetilde{k}_s^2-\widetilde{k}_p^2)\mathrm{H} \\
 - (\widetilde{k}_s^2-\widetilde{k}_p^2)\mathrm{H}& 2\widetilde{k}_p^2\mathrm{I}
 \end{bmatrix}(\mathrm{I}-\mathrm{J})
+\begin{bmatrix}
            \J&  \\
                        &     \J
            \end{bmatrix}.
 \]


%
%
With these choices of regularizing operators, we are now in the position to analyze the ensuing regularized formulations featuring the operator composition $\mathcal{A}_{\rm comb} \cal{R}$. To this end, we first introduce the decomposition
\begin{equation}\label{eq:Acomb}\begin{aligned}
\mathcal{A}_{\rm comb}  \ =\ & \underbrace{ \left( \frac{1}2\mathcal{H}_0 \HH\D+\frac{1}4  \begin{bmatrix}
                                         k_{p}^2 & \\
                                            & -k_{s}
                                       \end{bmatrix}\HH\D_{-1}
    +\frac{1}2\mathcal{H}_0\mathcal{Y}
    +\frac{1}4\begin{bmatrix}
                                                    &k_{p}^2 \\
                                              k_{s}^2 &
                                       \end{bmatrix}\HH\D_{-2}\mathcal{Y}
    \right)}_{\mathcal{A}_{\rm comb,pp}^{(-1)}}\\
&\
+
\underbrace{
\frac14\begin{bmatrix}
      k_p^2                     &\\
&- k_s^2     \\
    \end{bmatrix}(\D\Lambda_3\D+\HH\D_{-1})
    +\frac14
    \begin{bmatrix}
     &k_s^2\\
     k_p^2&
    \end{bmatrix}
\mathcal{Y }(\Lambda_3+\HH\D_{-3})\D
}_{\mathcal{A}_{\rm comb,pp}^{(2)}}
\\
&\   \underbrace{
+
\left(\begin{bmatrix}
     \W^{(2)}_{p}                      &k_s^2 {\V}_{s,\ttt,\nnn}-\K_s^\top \D\\
     k_p^2 {\V}_{p,\ttt,\nnn}-\K^\top_p\D & -\W^{(2)}_{s}\\
    \end{bmatrix}-
    \begin{bmatrix}
     \K^\top _{p} & \D \V_{s}^{(4)}\\
    \D \V_{p}^{(4)} & - \K^\top_{s}
    \end{bmatrix}\mathcal{Y}\right)}_{\mathcal{A}_{\rm comb}^{(2)}}
   .
\end{aligned}
\end{equation}
We stress that $\mathcal{A}_{\rm comb,pp}^{(-1)}\in\mathrm{OPS}(1)$ and $\mathcal{A}_{\rm comb,pp}^{(2)}\in\mathrm{OPS}(-2)$ are Fourier multiplier operators\deleted[id=vD]{, and therefore maps $\mathbb{T}_n\times \mathbb{T}_n$ into itself}. Furthermore, $\mathcal{A}_{\rm comb}^{(2)}\in\mathrm{OPS}(-2)$.
\begin{theorem}\label{theo:01}
It holds
\begin{eqnarray*}
 \mathcal{A}_{\tt comb}\mathcal{R}
 &=& -\frac{1}4\begin{bmatrix}
  \left(k_p^2+k_s^2+\widetilde{k}_p^2+ \widetilde{k}_s^2\right)\mathrm{I}&
    -\left(k_p^2+k_s^2-\widetilde{k}_p^2- \widetilde{k}_s^2\right)\mathrm{H} \\
    \left(k_p^2+k_s^2-\widetilde{k}_p^2- \widetilde{k}_s^2\right)\mathrm{H}&
   \left(k_p^2+k_s^2+\widetilde{k}_p^2+ \widetilde{k}_s^2\right)\mathrm{I}
\end{bmatrix}+\mathrm{OPS}(-1)
\end{eqnarray*}
and both the main part as well as the operator $\mathcal{A}_{\tt comb}\mathcal{R}$ itself are invertible.
\end{theorem}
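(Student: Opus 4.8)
The plan is to compute the composition $\mathcal{A}_{\mathrm{comb}}\mathcal{R}$ using the decomposition \eqref{eq:Acomb} of $\mathcal{A}_{\mathrm{comb}}$ into the three pieces $\mathcal{A}_{\mathrm{comb,pp}}^{(-1)}\in\mathrm{OPS}(1)$, $\mathcal{A}_{\mathrm{comb,pp}}^{(2)}\in\mathrm{OPS}(-2)$, and $\mathcal{A}_{\mathrm{comb}}^{(2)}\in\mathrm{OPS}(-2)$, together with the fact that $\mathcal{R}\in\mathrm{OPS}(1)$. Since $\mathcal{A}_{\mathrm{comb,pp}}^{(2)}\mathcal{R}$ and $\mathcal{A}_{\mathrm{comb}}^{(2)}\mathcal{R}$ are compositions of an operator of order $-2$ with one of order $1$, they both land in $\mathrm{OPS}(-1)$ and get absorbed into the remainder. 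So the entire computation of the principal part reduces to evaluating $\mathcal{A}_{\mathrm{comb,pp}}^{(-1)}\mathcal{R}$ modulo $\mathrm{OPS}(-1)$. Here I would exploit that $\mathcal{A}_{\mathrm{comb,pp}}^{(-1)}$ and $\mathcal{R}$ are both \emph{Fourier multiplier} (matrix) operators, so the composition is just a pointwise product of the $2\times2$ symbol matrices, which are explicit in terms of $\mathrm{H}$, $\mathrm{D}$, $\mathrm{D}_{-1}$, $\mathrm{D}_{-2}$, $\mathrm{J}$ and the wavenumbers.

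Concretely, I would first drop from $\mathcal{A}_{\mathrm{comb,pp}}^{(-1)}$ the terms that, after multiplication by $\mathcal{R}\in\mathrm{OPS}(1)$, produce something of order $\le -1$: the term $\tfrac14\mathrm{diag}(k_p^2,-k_s^2)\mathrm{H}\mathrm{D}_{-1}$ is already $\mathrm{OPS}(-1)$, and $\tfrac14\begin{bmatrix}&k_p^2\\k_s^2&\end{bmatrix}\mathrm{H}\mathrm{D}_{-2}\mathcal{Y}$ is in $\mathrm{OPS}(-1)$ since $\mathcal{Y}\in\mathrm{OPS}(1)$ and $\mathrm{H}\mathrm{D}_{-2}\in\mathrm{OPS}(-2)$; so their product with $\mathcal{R}$ is $\mathrm{OPS}(0)$ at worst — wait, that is not quite negligible, so more care is needed: actually $\mathrm{OPS}(-1)\cdot\mathrm{OPS}(1)\subset\mathrm{OPS}(0)$, which is not $\mathrm{OPS}(-1)$. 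The correct bookkeeping is that these cross terms contribute at order $0$ and must be retained, but one checks that when multiplied against the nilpotent structure of $\mathcal{R}$ they either cancel or feed only the $\mathrm{OPS}(-1)$ remainder after using $\mathcal{H}_0^2=0$ and $\mathrm{H}^2=\mathrm{I}-\mathrm{J}$; I would verify this by direct symbol multiplication. The genuinely leading contribution comes from $\tfrac12\mathcal{H}_0\mathrm{H}\mathrm{D}\cdot\mathcal{H}_0\mathrm{H}\mathrm{D}$, which vanishes to leading order because $\mathcal{H}_0^2=0$, and from the surviving cross terms $\tfrac12\mathcal{H}_0\mathrm{H}\mathrm{D}\cdot\tfrac12\mathrm{diag}(\widetilde k_s^2,-\widetilde k_p^2)\mathrm{H}\mathrm{D}_{-1}$ and $\tfrac12\mathcal{H}_0\mathcal{Y}\cdot\mathcal{H}_0\mathrm{H}\mathrm{D}$, etc. Collecting all order-$0$ surviving pieces and using $\mathrm{H}\mathrm{D}\,\mathrm{H}\mathrm{D}_{-1}=\mathrm{H}^2=\mathrm{I}-\mathrm{J}$ modulo smoothing and $\mathcal{Y}=\mathrm{H}\mathrm{D}+\dots$ gives precisely the stated $2\times2$ matrix with entries built from $k_p^2+k_s^2\pm(\widetilde k_p^2+\widetilde k_s^2)$.

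For the invertibility statement I would argue in two steps. First, the principal-part matrix $M:=-\tfrac14\begin{bmatrix} a\,\mathrm{I}& -b\,\mathrm{H}\\ b\,\mathrm{H}& a\,\mathrm{I}\end{bmatrix}$ with $a=k_p^2+k_s^2+\widetilde k_p^2+\widetilde k_s^2$ and $b=k_p^2+k_s^2-\widetilde k_p^2-\widetilde k_s^2$ is a Fourier multiplier; on the $n$-th Fourier mode it acts (for $n\ne0$) as $-\tfrac14\begin{bmatrix}a& \mp ib\\ \pm ib& a\end{bmatrix}$, whose determinant is $\tfrac1{16}(a^2-b^2)=\tfrac1{16}\cdot 4(k_p^2+k_s^2)(\widetilde k_p^2+\widetilde k_s^2)$, and this is nonzero because $\widetilde k_p^2+\widetilde k_s^2$ has nonzero imaginary part (recall $\widetilde k=k+i\epsilon$, $\epsilon>0$, so $\widetilde k^2 = k^2-\epsilon^2+2ik\epsilon$). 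On the $n=0$ mode one uses the $\mathrm{J}$-contribution so $M$ restricted there is $-\tfrac14 a\,\mathrm{I}$, again invertible. Hence $M:H^s\times H^s\to H^s\times H^s$ is boundedly invertible with uniformly bounded inverse multiplier. Second, $\mathcal{A}_{\mathrm{comb}}\mathcal{R}=M+\mathcal{T}$ with $\mathcal{T}\in\mathrm{OPS}(-1)$ compact on $H^s\times H^s$, so $\mathcal{A}_{\mathrm{comb}}\mathcal{R}$ is Fredholm of index zero; injectivity then follows from the injectivity of the underlying Navier boundary value problem (the exterior Dirichlet problem has a unique solution, as cited earlier, and the potential ansatz \eqref{eq:up_and_us} combined with the invertibility of $\mathrm{Y}_k$ and the uniqueness result gives triviality of the nullspace), whence invertibility. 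I would cite \cite{DomTurc:2023} for the well-posedness input at the continuous level.

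The main obstacle I anticipate is the careful bookkeeping in the symbol multiplication: there are many cross terms of formal order $0$, and it is easy to miss a cancellation or a factor from the noncommutativity of the matrix parts ($\mathcal{H}_0$ does not commute with $\mathrm{diag}(\cdot,\cdot)$, and $\mathcal{H}_0\,\mathrm{diag}(\alpha,\beta)\ne\mathrm{diag}(\alpha,\beta)\,\mathcal{H}_0$ in general) versus the commutativity of the scalar Fourier multipliers $\mathrm{H},\mathrm{D}_r,\mathrm{J}$. Keeping the order-$0$ terms straight while discarding the order-$(-1)$ ones — in particular tracking how $\mathcal{Y}=\mathrm{H}\mathrm{D}+\tfrac{\widetilde k^2}{2}\mathrm{H}\mathrm{D}_{-1}+\mathrm{J}$ interacts with $\mathcal{H}_0$ and with $\mathrm{H}\mathrm{D}_{-2}$ — is where the real work lies; everything else (the Fredholm/uniqueness argument) is standard.
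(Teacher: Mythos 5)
Your overall strategy coincides with the paper's: since $\mathcal{A}_{\rm comb,pp}^{(2)}\mathcal{R}$ and $\mathcal{A}_{\rm comb}^{(2)}\mathcal{R}$ are $\mathrm{OPS}(-2)\cdot\mathrm{OPS}(1)\subset\mathrm{OPS}(-1)$, everything reduces to the Fourier-multiplier composition $\mathcal{A}_{\rm comb,pp}^{(-1)}\mathcal{R}$ exploiting $\mathcal{H}_0^2=0$, and the invertibility is handled by a Fredholm-plus-uniqueness argument deferred to \cite{DomTurc:2023}; this is exactly what the paper does (its proof is essentially the computation carried out later around \eqref{eq:Hps} together with that citation), and your mode-by-mode determinant check of the principal matrix is fine. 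However, the sketch of the symbol computation contains two concrete errors which, taken literally, would not reproduce the stated principal part. First, the claim that the cross terms generated by $\tfrac14\mathrm{diag}(k_p^2,-k_s^2)\HH\D_{-1}$ and by the $\HH\D_{-2}\mathcal{Y}$ block, once multiplied against $\mathcal{R}$, ``either cancel or feed only the $\mathrm{OPS}(-1)$ remainder'' is false: composing each of them with the leading term $\mathcal{H}_0\HH\D$ of $\mathcal{R}$ gives, modulo smoothing operators, $-\tfrac14\,\mathrm{diag}(k_p^2,-k_s^2)\,\mathcal{H}_0$ and $-\tfrac14\,K_{\rm off}\,\mathcal{H}_0\HH$ (with $K_{\rm off}$ the off-diagonal wavenumber matrix), genuine order-zero operators. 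These are precisely the source of the $k_p^2+k_s^2$ entries of the matrix in the statement; if you discard them, the surviving pieces ($T_1R_2$, $T_3R_1$, $T_3R_2$ in your notation) produce a principal part containing only $\widetilde{k}_p^2,\widetilde{k}_s^2$, which contradicts the theorem. All five order-zero products must be retained and summed. Second, with the paper's definition \eqref{eq:def:H} one has $\HH^2=-\I$ exactly (this is used explicitly in Section 5), hence $\HH\D\,\HH\D_{-1}=-(\I-\J)$, not $\I-\J$ as you write; this sign enters every surviving term, and with your identity the overall $-\tfrac14$ prefactor would come out wrong. (Two minor points: on the zero Fourier mode $\HH$ acts as multiplication by $i$, so no separate $\J$-argument is needed for invertibility of the principal part there; and the off-diagonal matrix as printed in \eqref{eq:ADLASL}/\eqref{eq:Acomb} should carry $k_s^2$ in the $(1,2)$ slot, consistently with \eqref{eq:ASL} — with the printed ordering the bookkeeping does not close to the stated matrix.)

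So the framework is the right one and matches the paper, but the core step — the explicit order-zero bookkeeping of $\mathcal{A}_{\rm comb,pp}^{(-1)}\mathcal{R}$ — is asserted rather than done, and the two assertions you do make about it (cancellation of the $k$-dependent cross terms, $\HH^2=\I-\J$) are incorrect and would derail the computation; redoing the multiplication with $\HH^2=-\I$ and all five surviving products yields exactly the matrix $\mathcal{H}_{p,s}$ of the statement. The invertibility discussion is acceptable at the same level of detail as the paper, i.e.\ by appeal to \cite{DomTurc:2023} (injectivity of $\mathcal{R}$ and of $\mathcal{A}_{\rm comb}$ is where the condition $\Im\widetilde{k}^2>0$ is actually used, so it deserves at least that citation rather than only the uniqueness of the exterior Navier problem).
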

\begin{proof} We refer the reader to \cite{DomTurc:2023}.  The decomposition can be derived from straightforward calculations on the term $
 \mathcal{A}_{\rm comb,pp}^{(-1)}\mathcal{R}$ {as it is done in \eqref{eq:Hps}}.
\end{proof}

\begin{remark}
From previous Theorem, the identity
\[
 \begin{bmatrix}
  \varphi_p\\
\varphi_s
 \end{bmatrix}={\cal R}({\cal A}_{\rm comb}{\cal R})^{-1} \begin{bmatrix}
  f_{\nnn}\\
f_{\ttt}
 \end{bmatrix},
\]
the fact that ${\cal R}, \mathcal{Y}\in \mathrm{OPS}(1)$, the continuity of the trace operator $H^r(\Omega)\to H^{r-1/2}(\Gamma)$,
the continuity of the boundary layer potentials ${\rm DL}_{k,\Gamma}:H^s(\Gamma)\to H^{s+1/2}_{\rm loc}(\Omega^+)$, ${\rm SL}_{k,\Gamma}:H^{s-1}(\Gamma)\to H^{s+1/2}_{\rm loc}(\Omega^+)$ (for $s\ge 0$; see \cite{doi:10.1137/0519043} or \cite[Ch. 6 \& 8]{mclean:2000} and references therein for the limit case $s=0$), we can infer that for $r>3/2$ there exists $C>0$,
\[
\|u_p\|_{H^{r-1}_{\rm loc}(\Omega_+)}+
\|u_s\|_{H^{r-1}_{\rm loc}(\Omega_+)}\le C \|\bm{u}\|_{H^{r}_{\rm loc}(\Omega^+)\times H^{r}_{\rm loc}(\Omega^+)}
\]
which can be understood as a result on the stability of the Helmholtz decomposition for the solution of the Dirichlet problem for Navier equations.
\end{remark}


%
%
%
%
%
%
%
%

\section{Nystr\"om discretization for arc-length parametrizations BIEs and error analysis}
\label{err_anal}

In this section, we introduce a complete discrete Nystr\"om method to solve \eqref{eq:AR}. Then, we prove stability and convergence in Sobolev norms.

\subsection{ Nystr\"om method}
Let  $N$ be a positive integer and  denote the discrete space
\begin{equation}\label{eq:TN}
 \mathbb{T}_N = \mathop{\rm span}\langle e_n\ :\ n\in\mathbb{Z},\  -N/2\le n< N/2 \rangle.
\end{equation}
On $\mathbb{T}_N$ we consider the interpolation operator
\[
 \mathbb{T}_N\ni Q_N f\quad \text{s.t.}\quad (Q_Nf)(mh) = f(mh),\quad m\in\mathbb{Z}
\]
where $\{mh\}_{m\in\mathbb{Z}}$ are the grid points with mesh size $h:=2\pi/N$.

The Nystr\"om discretization we propose is basically a projection method in the space $\mathbb{T}_N\times \mathbb{T}_N$. Hence the action of the operator $\mathcal{A}_{\rm comb}\mathcal{R}$ on the elements of the space $\mathbb{T}_N$ must be either   explicitly available  or sufficiently well approximated. In the first case, we have $ \mathcal{A}_{\rm comb,\ pp}^{(1)}$,  $ \mathcal{A}_{\rm comb,\ pp}^{(-1)}$,  $\mathcal{R}$ and $\mathcal{Y}$, due to the fact the four of them are Fourier multiplier operators and therefore act  $\mathbb{T}_N\times  \mathbb{T}_N$ to itself. Hence, the design of a Nystr\"om discretization hinges on the construction of sufficiently accurate approximations
\[
\mathcal{A}_{{\rm comb}, N}^{(2)}\approx
\mathcal{A}_{{\rm comb}}^{(2)} =
\begin{bmatrix}
     \D  \V_{p}^{(4)} \D +\widetilde{\V}_{p }^{(3)}                     &k_s^2   {\V}_{s,\ttt,\nnn}
      -\K_{s}^\top \D\\
     k_p^2   {\V}_{p,\ttt,\nnn} -\K_{p}^\top \D\ & -
     \D  \V_{s}^{(4)} \D +\widetilde{\V}_{s }^{(3)} \\
    \end{bmatrix}-
    \begin{bmatrix}
     \K^\top _{p} & \D  \V_{s}^{(4)}\\
    \D  \V_{p}^{(4)} & - \K^\top_{s}
    \end{bmatrix}\begin{bmatrix}
               \mathrm{Y}_p &\\
                      & \mathrm{Y}_{s}
              \end{bmatrix}.
\]
In other words, we have to describe which quadrature rules are going to be used  in the approximations of the different integrals arising in the operators presented in the matrix
$\mathcal{A}^{(2)}_{{\rm comb}}$. For these purposes we consider singular product integration rules,  first introduced in \cite{Kress} and widely
used since then, which take care of
the different logarithmic singularities present in the definition of the operator $\mathcal{A}_{\rm comb}^{(2)}$.
In addition, we also have to consider the action of the derivative operator
when acting on elements which are not in $\mathbb{T}_N$.

In short, the following operators must be approximated via singular quadratures
\begin{equation}
\begin{aligned}
\D\V^{(4)}_k\D, & &  \widetilde{\V}_k^{(3)}, & &  {\V}_{k,\ttt,\nnn},&& 
 \K_k^\top\D,& &\K_k^\top,&& \D\V_k^{(4)}.
\end{aligned}
\end{equation}
We will describe in what follows such approximations. In view of Lemma \ref{lemma:2.4} we see that the approximation of  the operators $\K_k^\top$ (recall that these operators are in $\mathrm{OPS}(-3)$) involves dealing with integral operators of the following form
\[
(\K_k^\top \varphi)(t)=\int_{0}^{2\pi} C(t,\tau)(e_1(\tau-t)-1)^2\log\left(4\sin^2\frac{t-\tau}2\right)\varphi(\tau)\,{\rm d}\tau +
\int_{0}^{2\pi} D(t,\tau)\varphi(\tau)\,{\rm d}\tau.
\]
We then define the semi-discrete approximations
\begin{equation}\label{eq:log01}
(\K_{k,N}^\top \varphi)(t):
 =\int_{0}^{2\pi} (e_1(t-\tau)-1)^2\log\left(4\sin^2\frac{t-\tau}2\right)
 (Q_N(C(t,\cdot)\varphi)(\tau)\,{\rm d}\tau +
\int_{0}^{2\pi} Q_N (D(t,\cdot)\varphi)(\tau)\,{\rm d}\tau.
\end{equation}
Clearly, in the case of a regular kernel $D(t,\tau)$ we have
\[
 \int_{0}^{2\pi} Q_N (D(t,\cdot)\varphi)(\tau)\,{\rm d}\tau = \frac{\pi}{2N}
 \sum_{j=0}^{N-1}
 D(t,t_j)\varphi(t_j)
\]
which is a simple application of the trapezoidal rule  to the underlying integral. Furthermore, singular quadratures for the first integral operator in the right hand side of equations~\eqref{eq:log01} can be easily derived from the identity, see \eqref{eq:fourier:rho},
\[
-\frac{1}{4\pi}\int_{0}^{2\pi} (e_{1}(\tau)-1)^2\log\left(4\sin^2\frac{\tau}2\right) e_{-n}(\tau)\,{\rm d}\tau= \widehat{\rho}_3(n)
\]
The operator $\mathrm{K}_k^\top \D$ is approximated in the same way, i.e.,
\begin{equation}\label{eq:log01D}
 \mathrm{K}_{k,N}^\top \D\approx \mathrm{K}_k^\top \D
\end{equation}
since as we will see later, the derivative operator is  applied formally only on  trigonometric polynomials in which case it can be computed exactly.

Furthermore, the same technique as in~\eqref{eq:log01} is applied to discretize the operator $\widetilde{\V}_{k}^{(3)}$, which appears in the  regular part of the hypersingular operator $\W_k^{(2)}$. Similarly, see Lemma \ref{lemma:2.2},
\begin{equation}
\begin{aligned}
   [\V_{k,N}^{(4)}\varphi](t) \ := &
 \int_{0}^{2\pi} (e_1(t-\tau)-1)^3\log\left(4\sin^2\frac{t-\tau}2\right)
 (Q_N(A^{(4)}(t,\cdot)\varphi)(\tau)\,{\rm d}\tau \\
  &\ +
\int_{0}^{2\pi} Q_N (B^{(4)}(t,\cdot)\varphi)(\tau)\,{\rm d}\tau
\approx  [ \V_{k}^{(4)}\varphi](t)
\end{aligned}
\end{equation}
which can be computed explicitly using the Fourier coefficients of $\rho_4$ (see again \eqref{eq:fourier:rho}) so that we can introduce
\begin{equation}\label{eq:Wkn}
\W_{k,N}^{(2)} :=  \D Q_N\V_{k,N}^{(4)} \D +\widetilde{\V}_{k,N }^{(3)}\approx
\D \V_{k}^{(4)} \D +\widetilde{\V}_{k }^{(3)} = \W^{(2)}_{k}.
\end{equation}
Observe that the leftmost derivative operator has undergone the approximation of $\D\approx \D Q_N$, but such an approximation is not needed for the rightmost one.
%

Similarly, we construct
\[
 \D Q_N\V_{k,N}^{(4)} \approx \V_{k}^{(4)}.
\]
The last   remaining  operator  is $ {[}{\V}_{k,\ttt,\nnn} \varphi{]}(t)= \ttt(t)\cdot \V_{k}[\nnn\varphi](t)$ that can be dealt with in a similar manner
\begin{equation}\label{eq:log03}
 \begin{aligned}
  [ \V_{k,\ttt,\nnn,N} \varphi](t)\ :=\  & \frac{1}{2\pi}  \int_{0}^{2\pi}
  Q_N({A}_{\ttt}(\cdot,\tau)\varphi)(\tau) (e_1(\tau)-1)\log\left(4\sin^2\frac{t-\tau}2\right) \,{\rm d}\tau
\\
&\ +\int_{0}^{2\pi} Q_N(  {B}_{\ttt}(t,\cdot)\deleted[id=vD]{(\tau)}  \varphi)(\tau)\,{\rm d}\tau \approx [\V_{k,\ttt,\nnn} \varphi](t)
\end{aligned}
 \end{equation}
and whose evaluation is carried out with the help of the expressions $\widehat{\rho}_2(n)$ {(see again \eqref{eq:fourier:rho})}.
%

\begin{remark}\label{remark:the:remark}Notice that all the discretizations above follow the same prototype.
\begin{enumerate}[label={\rm (\alph*)}]
\item First, we have the {\em continuous} operator:
\begin{multline*}
(\mathrm{A}_\delta \varphi)(t) =\frac{1}{2\pi}\int_{0}^{2\pi} A(t,\tau)\delta(t-\tau)\varphi(\tau)\,{\rm d}\tau
= \sum_{n\in\mathbb{Z}} \widehat{\delta}(n)\varphi_A(t;n)  e_n(t)
\\
\varphi_A(t;n) = \widehat{[A(t,\cdot)\varphi]}(n) = \frac{1}{2\pi}\int_0^{2\pi} A(t,\tau)\varphi(\tau)e_{-n}(\tau)\,{\rm d}\tau.
\end{multline*}
\item Second, its numerical approximation is constructed, with $\nu =\lfloor n/2\rfloor$, via
\begin{multline*}
 (\mathrm{A}_{\delta,N}\varphi)(t) =\frac{1}{2\pi}\int_{0}^{2\pi} Q_N(A(t,\cdot)\varphi)(\tau) \delta(t-\tau) \,{\rm d}\tau
 = \sum_{n=-\nu}^{N-\nu-1} \widehat{\delta}(n) \varphi_{A,N}(t;n) e_n(t)\\
  \varphi_{A,N}(t,n)= \widehat{[Q_N(A(t,\cdot)\varphi]}(n)=
  \frac{1}{2\pi}\int_0^{2\pi} Q_N(A(t,\cdot)\varphi)(\tau)e_{-n}(\tau)\,{\rm d}\tau.
\end{multline*}
\end{enumerate}
In these expressions,  the Fourier coefficients $(\widehat\delta(n))_n$ are explicitly known which makes possible the exact evaluation of $\mathrm{A}_{\delta,N}$. Furthermore,   the trigonometric interpolating polynomial $Q_N$ or the discrete (Fourier) approximation of the derivative $\D Q_N\approx \D$  can be fit into this frame  with $A\equiv 1$ and $\delta = 2\pi \delta_0$,  $\delta_0$ being the Dirac delta at zero,    in the first case and
$\delta = 2\pi \delta'_0$ in the second case. This explains why we will be able to  deal with apparently different convergence estimates in the same manner.

Besides, it actually holds
  \[
\varphi_{A,N}(t,n) = h \sum_{\ell=0}^{N-1} (A(t, \cdot)\varphi)(\ell h) e_{-n}(\ell h), \quad n = -\nu,\ldots,N-1-\nu,\ h =\frac{2\pi}{N},
\]
which is just consequence of exactness of the rectangular rule in $\mathbb{T}_{2N-1}$. In particular, it makes FFT techniques directly applicable in the implementation. 

Hence,  when the action of the operators are evaluated at the node points $\{j h\}_{j\in\mathbb{Z}}$, such as the Nystr\"om method requires, we reduce this calculation to the matrix-vector product
\[
 {\bm\psi}_N = \bm{A}_{\delta,N}\bm{\varphi}_N, \quad \text{as approximation of $\psi = {\rm A}_{\delta}\varphi$}
\]
where 
\[
\begin{aligned}
{\bm\psi}_N&\approx (\psi(\ell h))_{\ell =0,\ldots,N-1}\in\mathbb{C}^N,
\quad
{\bm\varphi}_N= (\varphi(\ell h))_{\ell =0,\ldots,N-1}\in\mathbb{C}^N,
\end{aligned}
\]
This matrix $\bm{A}_{\delta,N}$ can be fast computed with
\[
 \bm{A}_{\delta,N} = \bm{A}_{N} \odot\bm{\Delta}_N,
\]
where ``\ $\odot$''  is the Hadamard or element-wise matrix product and $ \bm{A}_{N} := (A(\ell h, m h))_{\ell,m=0}^{N-1}$. Finally,   $\bm{\Delta}_N$ is the matrix given by
\[
\begin{aligned}
\bm{\Delta}_N\ &=\    \frac{1}N  (\bm{P}_N \bm{W}_N)^* \bm{W}_N =\frac{1}N  \bm{W}_N^*\overline{\bm{P}}_N \bm{W}_N  \\
\bm{P}_N\ &=\ {\rm diag}\,(\widehat{\delta}(0),\widehat{\delta}(1),\ldots,\widehat{\delta}(N-1-\mu),\widehat{\delta} (-\mu), \widehat{\delta} (-\mu+1),\dots, \widehat{\delta}(-1)),\\
  \bm{W}_N\ &=\   \left(\exp(-2\pi i \ell m/N)\right)_{\ell,\, m=0}^{N-1}.
 \end{aligned}
\]
That is to say, 
\[
 \bm{\Delta}_N = {\rm iFFT}^\top ( {\rm FFT}(  \overline{\bm{P}}_N  )).
\]
where
${\rm FFT}$ and ${\rm iFFT}^\top$ are the Discrete Fourier Transform and the inverse Discrete Fourier Transform applied  column-wise  and row-wise respectively.
\end{remark}

\subsubsection*{Numerical method}

Using the singular quadratures above, we can define with 
\begin{equation}\label{eq:AcombN}
\mathcal{A}_{{\rm comb},N}^{(2)} =
\begin{bmatrix}
     \D Q_N\V_{p,N}^{(4)} \D +\widetilde{\V}_{p,N }^{(3)}                     &k_s^2   {\V}_{s,\ttt,\nnn,N}
      -\K_{s,N}^\top \D\\
     k_p^2   {\V}_{p,\ttt,\nnn,N} -\K_{p,N}^\top \D\ & -
     \D Q_N\V_{s,N}^{(4)} \D +\widetilde{\V}_{s,N }^{(3)} \\
    \end{bmatrix}-
    \begin{bmatrix}
     \K^\top _{p,N} & \D Q_N\V_{s,N}^{(4)}\\
    \D Q_N\V_{p,N}^{(4)} & - \K^\top_{s,N}
    \end{bmatrix}\begin{bmatrix}
               \mathrm{Y}_p &\\
                      & \mathrm{Y}_{s}
              \end{bmatrix} 
\end{equation}
the discrete approximation of  ${\cal A}_{\rm comb,\mathcal{R}}$ 
\begin{equation}
\begin{aligned}
\mathcal{A}_{{\rm comb},\mathcal{R},N}&:=
\left( \mathcal{A}_{\rm comb,pp}^{(-1)}+  \mathcal{A}_{\rm comb,pp}^{(2)} +
\mathcal{Q}_N
 \mathcal{A}_{{\rm comb},N}^{(2)} \right) { \cal R}   ,\quad
   \mathcal{Q}_N&:=
    \begin{bmatrix}{Q}_N\\
     &{Q}_N
     \end{bmatrix}.
\end{aligned}
\end{equation}

Then, the numerical method can be written as follows: find $( \lambda_{p,N},\, \lambda_{s,N})$ so that
\begin{equation}\label{eq:theMethod}
 \begin{aligned}
\mathcal{A}_{{\rm comb},\mathcal{R},N} \begin{bmatrix}
                                                            \lambda_{p,N}\\
                                                            \lambda_{s,N}
                                                           \end{bmatrix}
=&
\mathcal{Q}_N
\begin{bmatrix}
   f_{\nnn}\\
   f_{\ttt}
   \end{bmatrix}, \\
   &&
\deleted[id=vD]{\mathcal{A}_{{\rm comb},\mathcal{R},N}:=
\left( \mathcal{A}_{\rm comb,pp}^{(-1)}+ \mathcal{A}_{\rm comb,pp}^{(2)}+
\mathcal{Q}_N
 \mathcal{A}_{{\rm comb},N}^{(2)} \right) { \cal R}
   \mathcal{Q}_N:=
    \begin{bmatrix}{Q}_N\\
     &{Q}_N
     \end{bmatrix}.}
\end{aligned}
\end{equation}
Notice that $\lambda_{p,N}, \lambda_{s,N}\in\mathbb{T}_N$ because so does the right hand side. In other words, the true unknowns are the values of $\lambda_{p,N},\ \lambda_{s,N}$ at the grid points $\{mh\}_{m=0}^{N-1}$, $h=2\pi/N$. Besides, this way of writing the equation will simplify the analysis in next subsection. Finally, we can define the densities, the numerical counterpart of \eqref{eq:AR2}, as
\begin{equation}\label{eq:theMethod:02}
 \mathbb{T}_N\times \mathbb{T}_N \ni
\begin{bmatrix*}
\varphi_{p,N}\\
\varphi_{s,N}
\end{bmatrix*}:= {\cal R}\begin{bmatrix}
                                                            \lambda_{p,N}\\
                                                            \lambda_{s,N}
                                                           \end{bmatrix}.
  \end{equation}
%

\subsection{Error analysis}

Our error analysis relies on two main results. The first one is a classical result concerning the error in the trigonometric polynomial interpolation in (periodic) Sobolev norms: for any $q\ge 0$, $r\ge 0$ with $q+r>1/2$, there exists $C_{q,r}>0$ such that
\begin{equation}\label{eq:QN}
 \|Q_N f-f\|_{q}\le C_{q,r}N^{q-r}\|f\|_{q+r}
\end{equation}
for any $N$ and $f\in H^{q+r}$ cf. \cite[Ch. 8]{Saranen}. As a byproduct, we have
\begin{equation}\label{eq:QN:01}
 \|Q_N f\|_{q}\le C_{q}  \|f\|_q
\end{equation}
for $q>1/2$ and, for $q\ge 0$
\begin{equation}\label{eq:QN:02}
\|Q_N f\|_{q}\le  \|f\|_q + C_{q} N^{-1} \|f\|_{q+1}.
\end{equation}
This last inequality will be useful to carry out the error analysis for  Sobolev exponents $q\in[0,1/2)$ since \eqref{eq:QN:01} cannot cover this case.  We will also use the inverse inequalities:
\[
 \|Q_N f\|_{q+r'}\le C N^{r'} \|Q_N f\|_{q}
\]
which hold for any $r'\ge 0$.

Let us stress that we will express the above and forthcoming estimates as operator convergence estimates in appropriate Sobolev norms. Hence, we will write, for instance,
\[
\begin{aligned}
 \|Q_N -\mathrm{I}\|_{H^{q+r}\to H^q}& \le C_{q,r} N^{ -r},\quad &&q\ge 0,\ r\ge 0,\ q+r>1/2,\\
 \|Q_N\mathrm{A}\|_{H^{q+r}\to H^q} & \le C_{q}  \|\mathrm{A}\|_{H^{q+r}\to H^q} + C N^{-1}  \|\mathrm{A}\|_{H^{q+r}\to H^{q+1}}  ,\quad&& q\ge 0,\\
  \|Q_N\|_{H^{q+r}\to H^{q+r'}}& \le   C_{q,r,r'} N^{r'}\|Q_N\|_{H^{q+r}\to H^q}, \quad&&    r'\ge 0,
  \end{aligned}
\]
where $\mathrm{I}$ is the identity and $\mathrm{A}$ is an arbitrary linear continuous operator between the corresponding Sobolev spaces.

The second main result concerns the error in operator norm between the continuous integral operators and the  different semi-discrete approximations proposed in this paper (recall Remark \ref{remark:the:remark}) that can be summarized as follows.

\begin{proposition}\label{prop:main:2a}
Let $\delta\in H^s$, for some $s$, with
\[
|\widehat{\delta}(n)|\le c (1+|n|)^{-m},\quad n\in\mathbb{Z}
\]
and for $A$ smooth and bi-periodic, define
\begin{multline*}
( \mathrm{A}_{\delta} \varphi)(t) :=\frac{1}{2\pi}\int_{0}^{2\pi} A(t,\tau)\delta(t-\tau)\varphi(\tau)\,{\rm d}\tau
= \sum_{n\in\mathbb{Z}} \widehat{\delta}(n)\varphi_A(t;n)  e_n(t)
\\
\varphi_A(t;n) = \widehat{[A(t,\cdot)\varphi]}(n) = \frac{1}{2\pi}\int_0^{2\pi} A(t,\tau)\varphi(\tau)e_{-n}(\tau)\,{\rm d}\tau
\end{multline*}
and its approximation
\begin{multline*}
 ( \mathrm{A}_{\delta,N}\varphi)(t) :=\frac{1}{2\pi}\int_{0}^{2\pi} Q_N(A(t,\cdot)\varphi)(\tau) \delta(t-\tau) \,{\rm d}\tau
 = \sum_{n=-\nu}^{N-\nu-1} \widehat{\delta}(n) \varphi_{A,N}(t;n) e_n(t)\\
  \varphi_{A,N}(t,n)= \widehat{[Q_N(A(t,\cdot)\varphi]}(n)=
  \frac{1}{2\pi}\int_0^{2\pi} Q_N(A(t,\cdot)\varphi)(\tau)e_{-n}(\tau)\,{\rm d}\tau.
\end{multline*}
Then for $q+m>1/2$, with $r\ge -m$  there exists $C_{q,r}>0$ such that
\[
 \| \mathrm{A}_{\delta}   - \mathrm{A}_{\delta,N}  \|_{H^{q+r}\to H^q}
 \le C_{q,r} N^{-r-\min\{q,m\}}.
\]

\end{proposition}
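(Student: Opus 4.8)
The plan is to estimate the error $\mathrm{A}_\delta\varphi - \mathrm{A}_{\delta,N}\varphi$ by splitting it into two pieces: the \emph{truncation error}, coming from restricting the Fourier sum over $n$ to the window $-\nu\le n\le N-\nu-1$, and the \emph{interpolation error}, coming from replacing $\varphi_A(t;n)$ by $\varphi_{A,N}(t;n)$ inside the retained window. Writing
\[
(\mathrm{A}_\delta - \mathrm{A}_{\delta,N})\varphi = \underbrace{\sum_{|n|\ \mathrm{outside\ window}} \widehat\delta(n)\varphi_A(t;n)e_n(t)}_{\text{(I): truncation}} \ +\ \underbrace{\sum_{n\ \mathrm{in\ window}} \widehat\delta(n)\big(\varphi_A(t;n)-\varphi_{A,N}(t;n)\big)e_n(t)}_{\text{(II): aliasing}}.
\]
For both pieces I would exploit that $A$ is smooth and bi-periodic, so the map $\varphi\mapsto A(t,\cdot)\varphi$ is bounded on every $H^\sigma$, uniformly in $t$; and that $t\mapsto \widehat{[A(t,\cdot)\varphi]}(n)$ together with its $t$-derivatives inherit good decay in $n$ from the smoothness of $A$ and the Sobolev regularity of $\varphi$ (this is the standard "kernel is smooth, hence the associated operator gains arbitrarily many orders" mechanism already invoked after Lemma~\ref{lemma:2.2}). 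Concretely, for $\varphi\in H^{q+r}$ one controls $\|\varphi_A(\cdot;n)\|_{H^\sigma}$ by $C_\sigma (1+|n|)^{-(q+r)}\|\varphi\|_{q+r}$ for the relevant range of $\sigma$, using that multiplication by the smooth function $A$ and the Fourier projection are both benign.

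For piece (I), the retained window has width $N$ and is centered (up to an $O(1)$ shift $\nu$) at the origin, so every omitted index satisfies $|n|\gtrsim N$. Bounding $|\widehat\delta(n)|\le c(1+|n|)^{-m}$ and pairing with the decay of $\varphi_A(\cdot;n)$ and with the weight $(1+|n|)^{2q}$ coming from the target norm $H^q$, one gets a tail sum $\sum_{|n|\gtrsim N}(1+|n|)^{2q-2m-2(q+r)}$-type expression, which converges (given $q+m>1/2$, after being slightly careful about whether $q<m$ or $q\ge m$) and is of size $N^{-r-\min\{q,m\}}$. The min appears precisely because the decay budget $(1+|n|)^{-m}$ from $\delta$ can be "spent" either to help convergence of the tail or to produce the extra factor $N^{-m}$; whichever is smaller between $q$ and $m$ is what limits the gain, while the factor $N^{-r}$ is the plain interpolation/regularity gain.

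For piece (II), the key tool is the aliasing identity: for $n$ in the window, $\varphi_{A,N}(t;n)-\varphi_A(t;n)$ is exactly the aliasing sum $\sum_{j\ne 0}\widehat{[A(t,\cdot)\varphi]}(n+jN)$, i.e.\ it is controlled by the \emph{tail beyond} $N$ of the Fourier series of $A(t,\cdot)\varphi$, uniformly in $t$ and in the window index $n$. This is again of order $N^{-(q+r)}\|\varphi\|_{q+r}$ in the appropriate $t$-Sobolev norm, or more sharply one reproduces the interpolation estimate \eqref{eq:QN} applied to the function $\tau\mapsto A(t,\tau)\varphi(\tau)$; summing against $\sum_{n\ \mathrm{window}}|\widehat\delta(n)|(1+|n|)^{q}$ and using $|\widehat\delta(n)|\le c(1+|n|)^{-m}$ once more yields the same $N^{-r-\min\{q,m\}}$ bound. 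The bookkeeping must be done so that the constant $C_{q,r}$ is independent of $N$ and so that the case $r\in[-m,0)$ (where $\delta$ itself is only in a negative-order space) is legitimate — this is why the hypothesis is $r\ge -m$ rather than $r\ge 0$, and one must check that the negative power $(1+|n|)^{r}$ in the source norm is absorbed by the $(1+|n|)^{-m}$ from $\delta$.

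The main obstacle I expect is \emph{uniformity in $t$ together with the sharp exponent}: one must track the $t$-dependence of $\varphi_A(t;n)$ carefully (differentiating in $t$ moves derivatives onto the smooth, harmless factor $A$, not onto $\varphi$) so that the $H^q$-norm in $t$ of the error series is genuinely bounded by the stated power of $N$, and one must identify $\min\{q,m\}$ rather than a cruder exponent by splitting the analysis according to whether the summation weight or the $\delta$-decay dominates. The reduction of the general prototype (including $\delta = 2\pi\delta_0$, giving $Q_N$, and $\delta = 2\pi\delta_0'$, giving $\D Q_N$) to a single argument — already flagged in Remark~\ref{remark:the:remark} — is what makes it worth stating the proposition in this abstract form, and the proof should make that reduction explicit at the outset.
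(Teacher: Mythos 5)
Your truncation-plus-aliasing plan is essentially the same route the paper relies on: its ``proof'' of Proposition~\ref{prop:main:2a} is only a pointer to the techniques of \cite[Ch.~12 and 13]{Kress} and to \cite{BoTuDo:2015,DomCat:2015}, which proceed exactly by combining the trigonometric interpolation/aliasing error analysis (cf.~\eqref{eq:QN}) with the decay $|\widehat{\delta}(n)|\le c(1+|n|)^{-m}$, and your explanation of how $\min\{q,m\}$ arises matches that analysis. The one refinement needed when writing it out is to replace the purely pointwise bound $|\varphi_A(t;n)|\le C(1+|n|)^{-(q+r)}\|\varphi\|_{q+r}$ by the $\ell^2$-in-$n$ information carried by the Sobolev norm (most cleanly obtained by expanding the smooth bi-periodic kernel $A$ in a rapidly convergent double Fourier series, which reduces everything to Fourier multipliers), since summing pointwise bounds over the tail $|n|\gtrsim N$ loses a factor $N^{1/2}$ and would yield only $N^{-r-m+1/2}$ instead of the sharp rate $N^{-r-\min\{q,m\}}$.
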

\begin{proof} The proof is based on the same techniques as those presented in \cite[Ch. 12 and 13]{Kress}. We also refer the reader to \cite{BoTuDo:2015} and \cite{DomCat:2015}.
\end{proof}

\begin{remark}\label{remark:convergence:estimates}
Let us point out that the operator norm convergence estimates for the numerical discretizations used in this paper follow in all cases the prototype
 \begin{equation}\label{eq:01:remark}
  \|\mathrm{L}-\mathrm{L}_{N}\|_{H^{q+r}\to H^q}\le C N^{-r-\min\{q,m\}}
 \end{equation}
 where $-m$ is the order of $\mathrm{L}$ (i.e. $\mathrm{A}\in\mathrm{OPS}(-m)$),  with $q+r>1/2$ and $r\ge -m$. Note that the first restriction ensures, by the Sobolev embedding theorem, that these operators act on continuous functions and therefore the interpolating operator $Q_N$, which is essential in the definition of all considered discretizations $\mathrm{L}_N$,  can be applied.

 Hence, \eqref{eq:01:remark}  can be easily verified for the pairs
 $\{\V_k^{(m)},\V^{(m)}_{k,N}\}$ (logarithmic operators with  indices $m\ge 1$),  $\{\mathrm{I},Q_N\}$ ($m=0$) or even for the differentiation operator $\{\D,\D Q_N\}$ (now with $m=-1$).

This observation paves the way for a better understanding of the underlying convergence estimates in the following subsection and greatly facilitates the subsequent proofs of the results that will be presented below.
\end{remark}


\begin{theorem}\label{th:02}
 For any $q\ge 1$, $r\ge 0$ with $q+r> 3/2 $ it holds
 \[
 \left\|\mathcal{A}_{{\rm comb}}^{(2)}\mathcal{R}-
 \mathcal{Q}_N\mathcal{A}_{{\rm comb},N}^{(2)} \mathcal{Q}_N\mathcal{R}
 \right\|_{H^{q+r}\times H^{q+r} \to H^q\times H^q }
 \le C_{q,r}N^{-r-\min\{ 1,q-2\}}
 \]
 with $C_{q,r}$ independent of $N$.
\end{theorem}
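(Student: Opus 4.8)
The plan is to reduce the matrix-operator estimate to a finite collection of scalar estimates of the prototype form \eqref{eq:01:remark}, and then track carefully how the various compositions and the extra interpolation operators $\mathcal{Q}_N$ and the regularizer $\mathcal{R}\in\mathrm{OPS}(1)$ shift the Sobolev indices. Concretely, I would first write
\[
\mathcal{A}_{{\rm comb}}^{(2)}\mathcal{R}-\mathcal{Q}_N\mathcal{A}_{{\rm comb},N}^{(2)}\mathcal{Q}_N\mathcal{R}
=\left(\mathcal{A}_{{\rm comb}}^{(2)}-\mathcal{Q}_N\mathcal{A}_{{\rm comb},N}^{(2)}\mathcal{Q}_N\right)\mathcal{R},
\]
so that, since $\mathcal{R}:H^{q+r}\times H^{q+r}\to H^{q+r-1}\times H^{q+r-1}$ boundedly, it suffices to bound $\|\mathcal{A}_{{\rm comb}}^{(2)}-\mathcal{Q}_N\mathcal{A}_{{\rm comb},N}^{(2)}\mathcal{Q}_N\|_{H^{q+r-1}\times H^{q+r-1}\to H^q\times H^q}$ by $C_{q,r}N^{-r-\min\{1,q-2\}}$. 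Writing $q'=q$, $r'=r-1$ this is an estimate with shifted exponents satisfying $q'+r'>1/2$; the target rate becomes $N^{-(r'+1)-\min\{1,q'-2\}}=N^{-r'-\min\{2,q'-1\}}$, which is the natural rate for an operator of order $-2$ composed with $\mathcal{Q}_N$ on both sides.

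Next I would split $\mathcal{A}_{{\rm comb}}^{(2)}-\mathcal{Q}_N\mathcal{A}_{{\rm comb},N}^{(2)}\mathcal{Q}_N$ into three telescoping pieces:
\[
\mathcal{A}_{{\rm comb}}^{(2)}-\mathcal{Q}_N\mathcal{A}_{{\rm comb}}^{(2)}\mathcal{Q}_N
+\mathcal{Q}_N\left(\mathcal{A}_{{\rm comb}}^{(2)}-\mathcal{A}_{{\rm comb},N}^{(2)}\right)\mathcal{Q}_N
+\mathcal{Q}_N\mathcal{A}_{{\rm comb}}^{(2)}(\mathcal{Q}_N-\mathrm{I}).
\]
Wait — more efficiently I would use the single telescoping $\mathcal{A}^{(2)}-\mathcal{Q}_N\mathcal{A}^{(2)}\mathcal{Q}_N=(\mathrm{I}-\mathcal{Q}_N)\mathcal{A}^{(2)}+\mathcal{Q}_N\mathcal{A}^{(2)}(\mathrm{I}-\mathcal{Q}_N)$ together with $\mathcal{Q}_N(\mathcal{A}^{(2)}-\mathcal{A}_N^{(2)})\mathcal{Q}_N$ after inserting $\mathcal{A}_N^{(2)}$. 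For the first two terms, $\mathcal{A}_{{\rm comb}}^{(2)}\in\mathrm{OPS}(-2)$ and the interpolation-error bound \eqref{eq:QN} together with the mapping properties give the claimed rate: $(\mathrm{I}-\mathcal{Q}_N):H^{q+2}\to H^q$ costs $N^{-2}$, $\mathcal{A}^{(2)}:H^{q+r-1}\to H^{q+r+1}$, and $(\mathrm{I}-\mathcal{Q}_N)$ applied to the right can be absorbed since $q+r-1>1/2$. For the genuinely new third term, $\mathcal{Q}_N(\mathcal{A}^{(2)}-\mathcal{A}_N^{(2)})\mathcal{Q}_N$, I would expand $\mathcal{A}_{{\rm comb}}^{(2)}$ entry-by-entry into its constituent operators $\D Q_N\V_k^{(4)}\D$, $\widetilde{\V}_k^{(3)}$, $\V_{k,\ttt,\nnn}$, $\K_k^\top\D$, $\K_k^\top$, $\D Q_N\V_k^{(4)}$ (composed where relevant with $\mathrm{Y}_k\in\mathrm{OPS}(1)$), and invoke Proposition~\ref{prop:main:2a} / Remark~\ref{remark:convergence:estimates} for each factor, being careful that the leftmost $\D$ in $\D Q_N\V_k^{(4)}\D$ is discretized ($\D Q_N$, order $-1$, rate $N^{-r-\min\{q,-1\}}$... here one uses $q\ge1$) while the rightmost $\D$ acts exactly on a trigonometric polynomial after $\mathcal{Q}_N$ on the right.

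The bookkeeping core is: each scalar factor $\mathrm{L}$ of order $-m$ contributes, via $\mathrm{L}-\mathrm{L}_N$, a factor $N^{-\rho-\min\{\cdot,m\}}$ while consuming $\rho+m$ derivatives; using the standard product/triangle trick $\mathrm{L}_1\mathrm{L}_2-\mathrm{L}_{1,N}\mathrm{L}_{2,N}=(\mathrm{L}_1-\mathrm{L}_{1,N})\mathrm{L}_2+\mathrm{L}_{1,N}(\mathrm{L}_2-\mathrm{L}_{2,N})$ one distributes the $N^{-2-r+1}$ total budget across the two factors (orders summing to $-2$), and the worst entry — the $\D Q_N\V_k^{(4)}\D$ block and the $\D Q_N\V_k^{(4)}\mathrm{Y}_k$ block, where a discretized derivative ($m=-1$) forces the $\min\{q,m\}=\min\{q,-1\}$ term and hence the constraint $q\ge1$ and the loss encoded in $\min\{1,q-2\}$ rather than $\min\{1,q\}$ — dictates the stated exponent. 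I expect the main obstacle to be exactly this: verifying that the presence of the \emph{discretized} leftmost $\D$ in $\D Q_N\V_k^{(4)}\D$ (and analogously in $\D Q_N \V_k^{(4)}$ composed with $\mathrm{Y}_k$), rather than an exact $\D$, does not degrade the rate below $N^{-r-\min\{1,q-2\}}$ — this requires splitting $\D - \D Q_N = \D(\mathrm{I}-Q_N)$, using that $\V_k^{(4)}\in\mathrm{OPS}(-4)$ has four orders of smoothing to spare against the order-$1$ operator $\D$ and the order-$1$ loss from differentiating the interpolation error, and checking the index inequalities $q+r>3/2$, $q\ge1$ are exactly what is needed for every intermediate Sobolev embedding ($q+r-1>1/2$) and every application of \eqref{eq:QN}–\eqref{eq:QN:02} (including the $q\in[1,3/2)$ regime where one must use \eqref{eq:QN:02} instead of \eqref{eq:QN:01}). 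Once the worst block is under control, the remaining blocks ($\widetilde{\V}_k^{(3)}$, $\V_{k,\ttt,\nnn}$, $\K_k^\top$, $\K_k^\top\D$) are strictly better behaved and give the same or faster rates, so assembling the matrix estimate from the $2\times2$ entrywise bounds completes the proof.
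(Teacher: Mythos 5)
Your proposal is correct and follows essentially the same route as the paper: telescope the error into interpolation-error terms for the exact operator plus the semi-discrete difference $\mathcal{A}_{{\rm comb}}^{(2)}-\mathcal{A}_{{\rm comb},N}^{(2)}$, and bound the latter block by block via Proposition~\ref{prop:main:2a} and Remark~\ref{remark:convergence:estimates} together with the composition triangle trick, the binding blocks (those carrying a right factor $\D$ or $\mathrm{Y}_k$ and the discretized leftmost derivative) producing exactly the rate $N^{-r-\min\{1,q-2\}}$. The only difference is bookkeeping: you factor $\mathcal{R}\in\mathrm{OPS}(1)$ out first and measure the component errors from $H^{q+r-1}$, whereas the paper keeps $\mathcal{Q}_N\mathcal{R}$, pays a factor $N$ through the inverse inequality $\|\mathcal{Q}_N\mathcal{R}\|_{H^{q+r}\times H^{q+r}\to H^{q+r}\times H^{q+r}}\le CN$, and measures the component errors from $H^{q+r}$; the two ledgers balance to the same final estimate.
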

\begin{proof}
Note first the order of the operators involved, $\mathcal{A}_{{\rm comb}}^{(2)}$ and $\mathcal{R}$, the estimate of convergence for the interpolating operator and the inverse inequality for $\mathbb{T}_N$ implies that for any $q,r\ge 0$ there exists $C>0$ such that
\begin{eqnarray*}
 \left\|\mathcal{A}_{{\rm comb}}^{(2)}\mathcal{R}-\mathcal{A}_{{\rm comb}}^{(2)}\mathcal{Q}_N\mathcal{R},
 \right\|_{H^{q+r}\times H^{q+r} \to H^q\times H^q }&\le& C  \left\|\mathrm{I}-Q_N
 \right\|_{H^{q+r-1}  \to H^{\max\{q-2,0\}}}\le C N^{ -r-\min\{1, q-1 \}}\\
\|\mathcal{Q}_N\mathcal{R}\|
 _{H^{q+r}\times H^{q+r} \to H^{q+r}\times H^{q+r}}&\le&  C  N,\\
 \left\|(\mathcal{I}-\mathcal{Q}_N)\mathcal{A}_{{\rm comb}}^{(2)}\mathcal{Q}_N\mathcal{R}\|
 \right\|_{H^{q+r}\times H^{q+r} \to H^q\times H^q }&\le& C  \left\|\mathrm{I}-Q_N
 \right\|_{H^{q+r}  \to H^{q }} \|\mathcal{Q}_N\mathcal{R}\|_{H^{q+r}\times H^{q+r} \to H^{q+r}\times H^{q+r} },\\
 &\le& C N^{-1-r}
 \end{eqnarray*}
(second bound is just an inverse inequality)
and therefore
 \begin{multline*}
 \left\|\mathcal{A}_{{\rm comb}}^{(2)}\mathcal{R}-
 \mathcal{Q}_N\mathcal{A}_{{\rm comb},N}^{(2)} \mathcal{Q}_N\mathcal{R}
 \right\|_{H^{q+r}\times H^{q+r} \to H^q\times H^q }\\
  \le C
  N \left\|\mathcal{A}_{{\rm comb}}^{(2)}-
\mathcal{A}_{{\rm comb},N}^{(2)}
 \right\|_{H^{q+r}\times H^{q+r} \to H^q\times H^q } +  C N^{-r-\min\{ 1,q-1\} }.
 \end{multline*}
According to the definition of the operators involved (see \eqref{eq:Acomb} and \eqref{eq:AcombN}), and noticing that $\D, \mathrm{Y}_k\in\mathrm{OPS}(1)$,  we see that the results is consequence of
\begin{subequations}\label{eq:02:proof:main:Th}

\begin{eqnarray}
 \|\D\V_{k}^{(4)}-Q_N\D\V_{k,N}^{(4)}\|_{H^{q+r-1}\to H^{q}}\\
 &&\hspace{-3cm}\ \le\ \|\D\V_{k}^{(4)}-{\mathrm Q}_N\D\V_{k}^{(4)}\|_{H^{q+r-1}\to H^{q}}+\|{\mathrm Q}_N\D\V_{k}^{(4)}-{\mathrm Q}_N\D\V_{k,N}^{(4)}\|_{H^{q+r-1}\to H^{q}}\nonumber\\
 &&\hspace{-3cm}\ \le\
 C_{q,r} N^{-r-2} +\|{\mathrm Q}_N\D\|_{H^{q+1}\to H^{q}}
 \|\V_{k}^{(4)}-\V_{k,N}^{(4)}\|_{H^{q+r-1}\to H^{q+1}}\nonumber\\
 &&\hspace{-3cm}\ \le\ C_{q,r} N^{-r-\min\{2,q-1\}}
  \label{eq:02a:proof:main:Th}
\end{eqnarray}
and the estimates (recall that ${ \V}_{k,\ttt,\nnn}$ is a logarithmic operator of order $-2$)
\begin{eqnarray}
 \| { \V}_{k,\ttt,\nnn}-{\V}_{k, \ttt,\nnn,N}\|_{H^{q+r}\to H^{q}}&\le&  C_{q,r}N^{ -r-\min\{2,q\}} \label{eq:02b:proof:main:Th},\\
 \| \widetilde{\V}_{k}^{(3)}-\widetilde{\V}_{k,N}^{(3)}\|_{H^{q+r }\to H^{q}}
 &\le&
 C_{q,r}N^{ -r-\min\{3,q\}} \label{eq:02c:proof:main:Th},\\
 \| { \K}_{k}^\top - {\K}_{k,N}^\top \|_{H^{q+r-1}\to H^{q}}&\le&  C_{q,r}N^{  -r-\min\{2,q-1\}}.  \label{eq:02d:proof:main:Th}
 \end{eqnarray}

\end{subequations}
The result is then proven.

\end{proof}

 We are ready to state the stability and convergence of the method:

\begin{theorem}\label{theo:4.5}
For $N$ large enough  the equations of the numerical method \eqref{eq:theMethod} admits a unique solution $(\lambda_{p,N},\lambda_{s,N})$ which satisfies, for any $q>2$,
\[
\| \lambda_{p,N}\|_{q}+\|\lambda_{s,N}\|_{q}\le C_q \big(
\|  \lambda_{p}\|_{q}+\|\lambda_{s}\|_{q}\big)
\]
with $C$ independent of $(\lambda_p,\ \lambda_s)$, the exact solution of \eqref{eq:AR}, and $N$.
Furthermore, we have the following error estimate
\[
\| \lambda_{p}-\lambda_{p,N}\|_{q}+\|\lambda_{s}-\lambda_{s,N}\|_{q}\le  C_{q,r}  N^{-r}\big(
 \|  \lambda_{p}\|_{q+r}+\|\lambda_{s}\|_{q+r}\big)
\]
with  $C_{q,r}>0$ independent of $N$ and $(\lambda_p,\lambda_s)$.
\end{theorem}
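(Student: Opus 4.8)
The strategy is the standard one for Nyström discretizations of regularized second-kind formulations: reduce the numerical scheme to a perturbation of an invertible operator equation on a fixed Sobolev space, apply a Neumann-series / collective-compactness style argument to get stability for $N$ large, and then bound the consistency error to get convergence. First I would fix $q>2$ and $r\ge 0$ and work on the space $H^q\times H^q$. Recall from Theorem~\ref{theo:01} that $\mathcal{A}_{\rm comb}\mathcal{R}=\mathcal{M}+\mathcal{T}$ where $\mathcal{M}$ is the explicit invertible Fourier-multiplier matrix (built from $\mathrm{I}$ and $\mathrm{H}$) and $\mathcal{T}\in\mathrm{OPS}(-1)$; in particular $\mathcal{A}_{\rm comb}\mathcal{R}:H^q\times H^q\to H^q\times H^q$ is boundedly invertible. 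The discrete operator $\mathcal{A}_{{\rm comb},\mathcal{R},N}$ differs from $\mathcal{A}_{\rm comb}\mathcal{R}$ only in the treatment of the $\mathrm{OPS}(-2)$ block: schematically
\[
\mathcal{A}_{{\rm comb},\mathcal{R},N}-\mathcal{A}_{\rm comb}\mathcal{R}
=\big(\mathcal{Q}_N\mathcal{A}_{{\rm comb},N}^{(2)}\mathcal{Q}_N-\mathcal{A}_{\rm comb}^{(2)}\big)\mathcal{R}
+\mathcal{A}_{\rm comb,pp}^{(-1)}(\mathrm{I}-\mathcal{Q}_N)\ \text{(on the relevant factors)} ,
\]
and since $\mathcal{A}_{\rm comb,pp}^{(-1)}$, $\mathcal{R}$, $\mathcal{Y}$ are Fourier multipliers they map $\mathbb{T}_N\times\mathbb{T}_N$ to itself, so the only genuinely discretized piece is the $\mathcal{A}_{\rm comb}^{(2)}\mathcal{R}$ term. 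By Theorem~\ref{th:02}, $\|\mathcal{A}_{\rm comb}^{(2)}\mathcal{R}-\mathcal{Q}_N\mathcal{A}_{{\rm comb},N}^{(2)}\mathcal{Q}_N\mathcal{R}\|_{H^q\times H^q\to H^q\times H^q}\le C N^{-\min\{1,q-2\}}\to 0$ as $N\to\infty$ (here $r=0$). So $\mathcal{A}_{{\rm comb},\mathcal{R},N}\to\mathcal{A}_{\rm comb}\mathcal{R}$ in operator norm on $H^q\times H^q$.

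Next, stability. Because $\mathcal{A}_{\rm comb}\mathcal{R}$ is invertible on $H^q\times H^q$ and $\|\mathcal{A}_{{\rm comb},\mathcal{R},N}-\mathcal{A}_{\rm comb}\mathcal{R}\|\to 0$, a classical perturbation lemma (Neumann series) gives that for $N$ large $\mathcal{A}_{{\rm comb},\mathcal{R},N}$ is invertible with $\|\mathcal{A}_{{\rm comb},\mathcal{R},N}^{-1}\|_{H^q\times H^q\to H^q\times H^q}\le 2\|(\mathcal{A}_{\rm comb}\mathcal{R})^{-1}\|$, uniformly in $N$. One subtlety: the discrete equation \eqref{eq:theMethod} lives on $\mathbb{T}_N\times\mathbb{T}_N$ with right-hand side $\mathcal{Q}_N(f_{\nnn},f_{\ttt})^\top$, so I must check that the restriction of $\mathcal{A}_{{\rm comb},\mathcal{R},N}$ to $\mathbb{T}_N\times\mathbb{T}_N$ is what is being inverted; since $\mathcal{R}$ maps $\mathbb{T}_N\times\mathbb{T}_N$ into itself and the first factor $(\mathcal{A}_{\rm comb,pp}^{(-1)}+\mathcal{A}_{\rm comb,pp}^{(2)}+\mathcal{Q}_N\mathcal{A}_{{\rm comb},N}^{(2)})$ maps into $\mathbb{T}_N\times\mathbb{T}_N$ after the outer $\mathcal{Q}_N$ is applied to the data, the finite-dimensional solvability follows from the injectivity of the (uniformly bounded below) operator. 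Applying $\mathcal{A}_{{\rm comb},\mathcal{R},N}^{-1}$ to $\mathcal{Q}_N(f_{\nnn},f_{\ttt})^\top=\mathcal{Q}_N\mathcal{A}_{\rm comb}\mathcal{R}(\lambda_p,\lambda_s)^\top$ and using $\|\mathcal{Q}_N\|_{H^q\to H^q}\le C_q$ (valid since $q>1/2$, cf. \eqref{eq:QN:01}) together with boundedness of $\mathcal{A}_{\rm comb}\mathcal{R}$ on $H^q\times H^q$ yields the stated a-priori bound $\|\lambda_{p,N}\|_q+\|\lambda_{s,N}\|_q\le C_q(\|\lambda_p\|_q+\|\lambda_s\|_q)$.

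Finally, convergence. Write the standard error identity: from $\mathcal{A}_{{\rm comb},\mathcal{R},N}(\lambda_{p,N},\lambda_{s,N})^\top=\mathcal{Q}_N\mathcal{A}_{\rm comb}\mathcal{R}(\lambda_p,\lambda_s)^\top$ and the definition of $\mathcal{A}_{{\rm comb},\mathcal{R},N}$,
\[
\mathcal{A}_{{\rm comb},\mathcal{R},N}\Big((\lambda_{p,N},\lambda_{s,N})^\top-\mathcal{Q}_N(\lambda_p,\lambda_s)^\top\Big)
=\mathcal{Q}_N\Big(\mathcal{A}_{\rm comb}\mathcal{R}-\mathcal{Q}_N\mathcal{A}_{{\rm comb},N}^{(2)}\mathcal{Q}_N\mathcal{R}-(\text{multiplier part})\mathcal{Q}_N\Big)(\lambda_p,\lambda_s)^\top .
\]
The multiplier part commutes with $\mathcal{Q}_N$ on $\mathbb{T}_N$, so after cancellation the right-hand side is $\mathcal{Q}_N\big(\mathcal{A}_{\rm comb}^{(2)}\mathcal{R}-\mathcal{Q}_N\mathcal{A}_{{\rm comb},N}^{(2)}\mathcal{Q}_N\mathcal{R}\big)(\lambda_p,\lambda_s)^\top$ plus a term $(\text{multiplier})(\mathrm{I}-\mathcal{Q}_N)(\lambda_p,\lambda_s)^\top$; both are controlled by $C_{q,r}N^{-r-\min\{1,q-2\}}(\|\lambda_p\|_{q+r}+\|\lambda_s\|_{q+r})$ using Theorem~\ref{th:02} (with that $r$) and \eqref{eq:QN} for the trace/multiplier remainder, together with $\|\mathcal{Q}_N\|_{H^q\to H^q}\le C_q$. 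Multiplying by the uniformly bounded $\mathcal{A}_{{\rm comb},\mathcal{R},N}^{-1}$ gives $\|(\lambda_{p,N},\lambda_{s,N})-\mathcal{Q}_N(\lambda_p,\lambda_s)\|_q\le C_{q,r}N^{-r}(\|\lambda_p\|_{q+r}+\|\lambda_s\|_{q+r})$; combining with the interpolation estimate $\|\mathcal{Q}_N(\lambda_p,\lambda_s)-(\lambda_p,\lambda_s)\|_q\le C_{q,r}N^{-r}(\|\lambda_p\|_{q+r}+\|\lambda_s\|_{q+r})$ from \eqref{eq:QN} and the triangle inequality finishes the proof. The main obstacle I anticipate is bookkeeping the exact factors of $\mathcal{Q}_N$ and verifying that the Fourier-multiplier pieces genuinely commute with $\mathcal{Q}_N$ on $\mathbb{T}_N$ so that the only error contributions are the $\mathrm{OPS}(-2)$-block discretization (Theorem~\ref{th:02}) and the single interpolation error of the data; the restriction $q>2$ enters precisely because Theorem~\ref{th:02} requires $q+r>3/2$ with the $\min\{1,q-2\}$ exponent positive for the $r=0$ stability step.
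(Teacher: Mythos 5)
Your proposal is correct and follows essentially the same route as the paper: operator-norm consistency from Theorem~\ref{th:02}, a Neumann-series perturbation argument giving uniform invertibility of $\mathcal{A}_{{\rm comb},\mathcal{R},N}$ on $H^q\times H^q$ for $N$ large, and then the standard error identity combined with the interpolation estimates \eqref{eq:QN}--\eqref{eq:QN:01}. The only (cosmetic) difference is bookkeeping: you compare $(\lambda_{p,N},\lambda_{s,N})$ with $\mathcal{Q}_N(\lambda_p,\lambda_s)$ and add the interpolation error, whereas the paper bounds $\lambda_{p,N}-\lambda_p$ directly, absorbing the data term $\mathcal{Q}_N f-f$ via the continuity of $\mathcal{A}_{\rm comb}\mathcal{R}$ on $H^{q+r}\times H^{q+r}$; both yield the same $O(N^{-r})$ estimate.
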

\begin{proof} We recall again the relation between exact
\begin{equation}\label{eq:Proof:00}
\mathcal{A}_{\rm comb}
{ \cal R}\begin{bmatrix}
                                                            \lambda_{p}\\
                                                            \lambda_{s}
                                                           \end{bmatrix} =
\left(   \mathcal{A}_{\rm comb,pp}^{(-1)} + \mathcal{A}_{\rm comb,pp}^{(2)} +
 \mathcal{A}_{{\rm comb}}^{(2)}\right){ \cal R}\begin{bmatrix}
                                                            \lambda_{p,N}\\
                                                            \lambda_{s,N}
                                                           \end{bmatrix}
=
\begin{bmatrix}
    f_{\nnn}\\
     f_{\ttt}
   \end{bmatrix}
\end{equation}
and numerical (see \eqref{eq:theMethod}) solution:
\begin{equation}\label{eq:Proof:01}
\mathcal{A}_{{\rm comb},N}
{ \cal R}\begin{bmatrix}
             \lambda_{p,N}\\
            \lambda_{s,N}
           \end{bmatrix} =
\left(  \mathcal{A}_{\rm comb,pp}^{(-1)}+ \mathcal{A}_{\rm comb,pp}^{(2)} +
\mathcal{Q}_N
 \mathcal{A}_{{\rm comb},N}^{(2)}\mathcal{Q}_N \right)  { \cal R}  \begin{bmatrix}
    \lambda_{p,N}\\
    \lambda_{s,N}
               \end{bmatrix}
=
\begin{bmatrix}
   Q_N f_{\nnn}\\
   Q_N f_{\ttt}
   \end{bmatrix}.
\end{equation}
As consequence of Theorem  \ref{th:02}, and for $q>2$,
\[
\left\| (\mathcal{A}_{{\rm comb}, N}^{(2)}- \mathcal{A}_{{\rm comb}}^{(2)})\mathcal{R}\right\|_{H^q\times H^q\to H^q\times H^q}\to 0
\]
and therefore $\mathcal{A}_{{\rm comb}, N}\mathcal{R}:H^q\times H^q\to H^q\times H^q$
is uniformly continuous with uniformly continuous inverse  provided that $N$ is large enough.

Furthermore,
\begin{eqnarray*}
\| \lambda_{p,N} -\lambda_p\|_{q}+
\| \lambda_{s,N} -\lambda_s\|_{q} &\le&  \left\|
\mathcal{A}_{{\rm comb}, N} \mathcal{R}\begin{bmatrix}
 \lambda_{p,N} -\lambda_p\\
  \lambda_{s,N} -\lambda_s
\end{bmatrix}
\right\|_q \\
&& \hspace{-3cm} = \  \left\|\left( \mathcal{A}_{{\rm comb}}^{(2)}{ \cal R}  -\mathcal{Q}_N
 \mathcal{A}_{{\rm comb},N}^{(2)}   \mathcal{Q}_N{ \cal R}\right)  \begin{bmatrix}
  \lambda_p\\
  \lambda_q
  \end{bmatrix}\right\|_{q} \\
&& \hspace{-3cm}\qquad   +
   \left\|
  \begin{bmatrix}
{Q}_N f_{\nnn}-f_{\nnn} \\
    {Q}_N f_{\ttt}-f_{\ttt}
    \end{bmatrix}
    \right\|_{q}\\
&& \hspace{-3cm} \le \  C_{q,r} N^{-r-\min\{ 1,q-2\}} \big(
 \|  \lambda_{p}\|_{q+r}+\|\lambda_{s}\|_{q+r}\big)+
   C'_{q,r} N^{-r} \big(
 \|f_{\nnn}\|_{q+r}+\|f_{\ttt}\|_{q+r}\big)\\
&& \hspace{-3cm} \le \   C''_{q,r}N^{-r}  \big(
 \|  \lambda_{p}\|_{q+r}+\|\lambda_{s}\|_{q+r}\big)
\end{eqnarray*}
where in the last step we have used the continuity $\mathcal{A}_{{\rm comb}}\mathcal{R}: H^{q+r}\times H^{q+r}\to H^{q+r}\times H^{q+r}$.
\end{proof}

\begin{corollary}
Let $(\varphi_p,\varphi_s)$ be the given  by \eqref{eq:AR2}, and $(\varphi_{p,N},\varphi_{s,N})$ that given by \eqref{eq:theMethod:02}. Then for any $N$ large enough, and for any $q>1$, $r\ge 0$ there exists $C_{r,q},\ C'_{r,q}>0$ so that
\[
\|   \varphi_{p}-\varphi_{p,N}\|_{q}+\|\varphi_{s}-\varphi_{s,N}\|_{q}\le  C_{q,r}  N^{-r}\big(
 \|  \varphi_{p}\|_{q+r}+\|\varphi_{s}\|_{q+r}\big)\le C'_{q,r}  N^{-r}\big(
 \|  f_{\nnn}\|_{q+r+1}+\|f_{\ttt}\|_{q+r+1}\big)
\]
\end{corollary}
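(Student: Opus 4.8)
The plan is to reduce the claim to the stability and convergence result for the auxiliary densities $(\lambda_{p},\lambda_{s})$ already established in Theorem~\ref{theo:4.5}. The entry point is the algebraic identity furnished by the definitions: by \eqref{eq:AR2} and \eqref{eq:theMethod:02} the physical densities are the image of the $\lambda$'s under the regularizer, so that
\[
\begin{bmatrix}\varphi_{p}-\varphi_{p,N}\\[0.3ex] \varphi_{s}-\varphi_{s,N}\end{bmatrix}
=\mathcal{R}\begin{bmatrix}\lambda_{p}-\lambda_{p,N}\\[0.3ex] \lambda_{s}-\lambda_{s,N}\end{bmatrix}.
\]
Since $\mathcal{R}\in\mathrm{OPS}(1)$ is bounded from $H^{q+1}\times H^{q+1}$ into $H^{q}\times H^{q}$, the error of the densities in $H^{q}$ is controlled by the error of $(\lambda_{p},\lambda_{s})$ in $H^{q+1}$. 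This is precisely why the corollary asks only for $q>1$ rather than $q>2$: the exponent at which Theorem~\ref{theo:4.5} has to be invoked is $q+1$.

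Concretely, I would first apply Theorem~\ref{theo:4.5} with the Sobolev exponent $q+1$ (admissible because $q>1$ implies $q+1>2$) to get
\[
\|\lambda_{p}-\lambda_{p,N}\|_{q+1}+\|\lambda_{s}-\lambda_{s,N}\|_{q+1}\le C_{q,r}\,N^{-r}\big(\|\lambda_{p}\|_{q+1+r}+\|\lambda_{s}\|_{q+1+r}\big),
\]
and then use the boundedness of $\mathcal{R}$ to pass to the left-hand side. To close the estimate I would use that $(\lambda_{p},\lambda_{s})$ solves \eqref{eq:AR}, hence $(\lambda_{p},\lambda_{s})^{\top}=(\mathcal{A}_{\rm comb}\mathcal{R})^{-1}(f_{\nnn},f_{\ttt})^{\top}$, and that by Theorem~\ref{theo:01} the operator $(\mathcal{A}_{\rm comb}\mathcal{R})^{-1}$ is bounded on $H^{s}\times H^{s}$ for every $s$; this gives $\|\lambda_{p}\|_{q+1+r}+\|\lambda_{s}\|_{q+1+r}\le C\big(\|f_{\nnn}\|_{q+1+r}+\|f_{\ttt}\|_{q+1+r}\big)$, i.e. the rightmost bound of the statement. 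The intermediate bound uses the same ingredients: applying $\mathcal{R}\in\mathrm{OPS}(1)$ to $(\lambda_{p},\lambda_{s})$ shows $\|\varphi_{p}\|_{q+r}+\|\varphi_{s}\|_{q+r}\le C\big(\|\lambda_{p}\|_{q+1+r}+\|\lambda_{s}\|_{q+1+r}\big)$, so the $N^{-r}$-bound on the density error can be phrased in terms of $\|\varphi_{p}\|_{q+r}+\|\varphi_{s}\|_{q+r}$, and this last quantity is in turn dominated by $\|f_{\nnn}\|_{q+r+1}+\|f_{\ttt}\|_{q+r+1}$ because $\varphi=\mathcal{R}(\mathcal{A}_{\rm comb}\mathcal{R})^{-1}f$ with $\mathcal{R}$ of order $1$ and $(\mathcal{A}_{\rm comb}\mathcal{R})^{-1}$ of order $0$.

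The only delicate point — and the part I expect to cost a line or two of care — is the consistent bookkeeping of the one-order shift introduced by $\mathcal{R}$. Because the principal part $\mathcal{H}_{0}\mathrm{H}\mathrm{D}$ of $\mathcal{R}$ is defective (its symbol is nilpotent), $\mathcal{R}$ is not elliptic of order $1$ and one cannot give back the derivative lost when applying it; this is exactly why the natural estimate for the error of the physical densities, and the stability bound for $\varphi$ itself, must be stated with one extra order of regularity of the data, $\|f\|_{q+r+1}$ instead of $\|f\|_{q+r}$. Everything else is routine: one just has to check that the hypotheses of Theorems~\ref{theo:4.5} and \ref{theo:01} are met at the shifted Sobolev level $q+1$, and invoke the continuity of the (regularized) solution operator already recorded in the remark following Theorem~\ref{theo:01}.
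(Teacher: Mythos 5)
Your route coincides with the paper's: the identity $(\varphi_p-\varphi_{p,N},\,\varphi_s-\varphi_{s,N})^\top=\mathcal{R}(\lambda_p-\lambda_{p,N},\,\lambda_s-\lambda_{s,N})^\top$ coming from \eqref{eq:AR2} and \eqref{eq:theMethod:02}, the boundedness of $\mathcal{R}\in\mathrm{OPS}(1)$ from $H^{q+1}\times H^{q+1}$ to $H^{q}\times H^{q}$, Theorem~\ref{theo:4.5} applied at the shifted exponent $q+1$ (admissible because $q>1$), and the invertibility of $\mathcal{A}_{\rm comb}\mathcal{R}\in\mathrm{OPS}(0)$ from Theorem~\ref{theo:01} to trade norms of $(\lambda_p,\lambda_s)$ for norms of $(f_{\nnn},f_{\ttt})$. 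This is exactly the paper's (two-line) argument and it does deliver the rightmost bound $C'_{q,r}N^{-r}\big(\|f_{\nnn}\|_{q+r+1}+\|f_{\ttt}\|_{q+r+1}\big)$.

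One step of your write-up is stated in the wrong direction, though. For the intermediate bound you invoke $\|\varphi_p\|_{q+r}+\|\varphi_s\|_{q+r}\le C\big(\|\lambda_p\|_{q+r+1}+\|\lambda_s\|_{q+r+1}\big)$ (true, since $\varphi=\mathcal{R}\lambda$ and $\mathcal{R}\in\mathrm{OPS}(1)$) and conclude that the $N^{-r}$ estimate "can be phrased in terms of $\|\varphi_p\|_{q+r}+\|\varphi_s\|_{q+r}$". Replacing $\|\lambda\|_{q+r+1}$ by $\|\varphi\|_{q+r}$ on the right-hand side of an upper bound requires the \emph{reverse} inequality $\|\lambda\|_{q+r+1}\le C\|\varphi\|_{q+r}$, i.e. a gain of one order for $\mathcal{R}^{-1}$; but, as you yourself point out in your closing paragraph, the principal symbol of $\mathcal{R}$ in \eqref{eq:R} is nilpotent, and a direct computation on Fourier modes shows $\mathcal{R}^{-1}=\mathcal{R}(\mathcal{R}^2)^{-1}\in\mathrm{OPS}(1)$, not $\mathrm{OPS}(-1)$, so this reverse inequality is not available. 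What your chain actually proves is the error bound in terms of $\|\lambda_p\|_{q+r+1}+\|\lambda_s\|_{q+r+1}$ and hence of $\|f_{\nnn}\|_{q+r+1}+\|f_{\ttt}\|_{q+r+1}$, together with the separate estimate $\|\varphi\|_{q+r}\le C\|f\|_{q+r+1}$; the middle member of the displayed chain is not justified by the inequality you cite. In fairness, the paper's own proof ("straightforward, since \dots $\mathcal{R}\in\mathrm{OPS}(1)$ and $\mathcal{A}_{\rm comb}\mathcal{R}\in\mathrm{OPS}(0)$ is invertible") is equally silent on this point, so your proposal matches the paper on the substantive conclusion; just do not present $\|\varphi\|_{q+r}\le C\|\lambda\|_{q+r+1}$ as a justification of the intermediate estimate.
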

\begin{proof}
It is straightforward, since
\[
 \begin{bmatrix}
  \varphi_{p}-\varphi_{p,N}\\
  \varphi_{s}-\varphi_{s,N}
 \end{bmatrix}
= {\cal R} \begin{bmatrix}
  \lambda_{p}-\lambda_{p,N}\\
  \lambda_{s}-\lambda_{s,N}
 \end{bmatrix}
\]
${\cal R}\in\mathrm{OPS}(1)$ and, by Theorem \ref{theo:01}, ${\cal A}_{\rm comb}{\cal R}\in{\rm OPS}(0)$ is invertible.
\end{proof}

\section{Arbitrary parametrizations}\label{gen}

Although arc-length parametrizations ${\bf x}:[0,2\pi]\to\Gamma$
can be constructed for rather general geometries, it is important to analyze Nystr\"om discretizations for arbitrary smooth $2\pi$-periodic parametrizations
$\widecheck{\bf x}:\mathbb{R}\to\Gamma$,
and we show in this section how to extend the analysis to the general case. The main difficulty arises from the fact $|{\bf x}'|$ is no longer constant, which in turn makes  the regularizing operators $\mathcal{Y}$ and $\mathcal{R}$, as well as the tangential derivative $\mathrm{\partial}_{\ttt}$, to no longer be Fourier multiplier operators. Furthermore, the principal part operator $\mathcal{A}_{\rm comb,pp}^{(-1)}+ \mathcal{A}_{\rm comb,pp}^{(2)} $ requires a more delicate analysis in order to extract its principal symbol in Fourier multiplier form, so that the remainder is a sufficiently regular operator, a trick which is essential to the numerical analysis we have presented in the previous sections.

\subsection{The construction of the regularizing operators}
Let us then start then from an arbitrary smooth $2\pi-$periodic parametrization $\widecheck{\bf x}:\R\to{\Gamma}$  and set from now on
\begin{equation}\label{eq:eta}
\eta(\tau)= |\widecheck{\bf x}'(\tau)|
\end{equation}
the norm of the parametrization. Then
\[
 \widecheck{\D}\varphi := \frac{1}{\eta}\D\varphi = (\partial_{\bm t}\varphi_\Gamma)\circ\widecheck{\bf x}
\]
is just the parameterized tangent derivative. Define also
\[
  \widecheck {\D}_{-1}\varphi =  \D_{-1}(\eta \varphi)
\]
so that for any  $\varphi$ of zero mean.
\[
 \widecheck{\D}_{-1}  \widecheck {\D}\varphi    =\varphi,\quad \text{if }\int_0^{2\pi}
\varphi \eta = \int_{\Gamma}\varphi_\Gamma = 0.
\]
We extend this operator for negative integer values of $r$,
\[
   \widecheck {\D}_{-r}\varphi  =  \D_{-r} (\eta^r\varphi)
\]
and introduce the averaging operator accordingly:
\[
\widecheck{\J}\varphi:= \J(\eta\varphi) = \frac{1}{2\pi}\int_{\Gamma}\varphi_\Gamma.
\]
Notice then $ \widecheck{\J} 1 =1$ (recall Remark \ref{remark:curves:2pi}).

On the other hand, it is not difficult to prove that  (recall the notation $\varphi  = \varphi_\Gamma \circ\widecheck {\bf x}$ and
{assume the parameterizations ${\bf x}$ and $\widecheck{\bf x}$ both start from the same initial points})
\[
 \HH\varphi - \mathrm{p.v.}\int_{0}^{2\pi}  \frac{\eta(\tau)}{|{\bf x}(\cdot)-{\bf x}(\tau)|}\varphi(\tau)\,{\rm d}\tau =
  \HH\varphi - \mathrm{p.v.}\int_{\Gamma}  \frac{1}{|  {\bf x}(\cdot)-{\bf y}|}\varphi_\Gamma ({\bf y})\,{\rm d}{\bf y}
  \in \mathrm{OPS}(-\infty).
\]
(It suffices to show that, using \eqref{eq:def:H},  the difference between the two operators is an integral operator with smooth kernel)
As a simple consequence, if  for any parametrization $\bm{z}$ we set $\HH_{\bf z}:H^s(\Gamma)\to H^s(\Gamma)$  the operator defined by
\[
(\HH_{\bf z}\varphi_\Gamma){\circ {\bf z}} :=\HH  \varphi,
\]
we have that   $\HH_{{\bf x}}-\HH_{\widecheck{\bf x}}$ is a smoothing operator.  In other words, the  Hilbert transform when seeing acting on functions on $\Gamma$ by means of two different smooth parametrizations differ by an operator of order $-\infty$.

The (parameterized) Dirichlet-To-Neumann
and the regularizer operator becomes now
\begin{equation} \label{eq:Ykcheck}
 \widecheck{\mathrm{Y}}_k=  \eta^{-1} \D\HH +\frac{\widetilde{k}^2}2 \HH \D _{-1}\eta
 +\eta^{-1} \J,\quad
  \widecheck{\mathcal{Y}} := \begin{bmatrix}
                                    \widecheck{\mathrm{Y}}_p&\\
                                   &\widecheck{\mathrm{Y}}_s
                                  \end{bmatrix}\in\mathrm{OPS}(1).
\end{equation}
Note that $\eta^{-1} \J$ is used above for $\widecheck{\mathrm{Y}}_k$ instead of the perhaps more natural $\widecheck{\J}=\J\eta$ because it simplifies the analysis. Note, however, that these two alternatives differ in a $\mathrm{OPS}(-\infty)$ operator.

We also define
\begin{equation}
\begin{aligned}
\widecheck{\mathcal{R}}&\ =\
\frac{1}{\eta}
 \mathcal{H}_0
 \HH\D
+\frac{1}{2}\begin{bmatrix}
             \widetilde{k}_s^2&\\
                            & -\widetilde{k}_p^2
            \end{bmatrix}\HH\widecheck{\D}_{-1}
            + \begin{bmatrix}
              \widecheck{\J}&\\
                            &\widecheck{\J}
            \end{bmatrix} \\
& \ =\
 \frac{1}{\eta}
 \mathcal{H}_0
 \HH\D
+\frac{1}{2}\begin{bmatrix}
             \widetilde{k}_s^2&\\
                            & -\widetilde{k}_p^2
            \end{bmatrix}\HH\D_{-1}\eta
            + \begin{bmatrix}
              {\J}&\\
                            & {\J}
            \end{bmatrix}\eta\in\mathrm{OPS}(1),\qquad
 \mathcal{H}_0=\begin{bmatrix}
 \mathrm{I}& -\mathrm{H} \\
 -\mathrm{H}&-\mathrm{I}
 \end{bmatrix}.
 \end{aligned}
\end{equation}
The newly defined operators $\widecheck{\mathrm{Y}}_k$ and respectively $\widecheck{\mathcal{R}}$ are parametrization invariant regularizers  (module regularizing operators of order $-2$). Indeed, it is a simple matter to prove the following regularizing properties with respect to arclength parametrization versions
\[
\begin{aligned}
 \widecheck{\mathrm{Y}}_k \varphi-
 (\mathop{\rm DtoN}\nolimits_k \varphi_\Gamma)\circ{\bf x}
&= \mathrm{OPS}(-2),\quad
 \widecheck{\mathcal{R}}(\varphi_p,\varphi_s)-(\mathcal{R}(\varphi_{p,\Gamma},\varphi_{s,\Gamma}))  \circ{\bf x} &= \mathrm{OPS}(-2).
 \end{aligned}
\]
The first result above follows from \eqref{eq:Yk-2Wk} and \eqref{exp:Wkhat} below. The second result is obtained by substituting $\D_{m}$ with $\widecheck{\D}_{m}$ and utilizing the fact that $\eta \HH-\HH\eta\in{\rm OPS}(-\infty)$ (see discussion about \eqref{eq:def:Cinfty} below), which leads to the simpler expression for $\mathcal{R}$ as proposed above. Furthermore, the following result holds

\begin{proposition}
We have
\begin{enumerate}[label =(\alph*)]
 \item\label{prop:1}
$\widecheck{\mathcal{Y}}_k \in \mathrm{OPS}(1)$ and invertible.
\item\label{prop:2}
$\widecheck{\mathcal{R}} \in\mathrm{OPS}(1)$ and injective.
\end{enumerate}
\end{proposition}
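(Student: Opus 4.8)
Both memberships $\widecheck{\mathrm{Y}}_k\in\mathrm{OPS}(1)$ and $\widecheck{\mathcal R}\in\mathrm{OPS}(1)$ are pure order bookkeeping: each operator is a sum of a term in $\mathrm{OPS}(1)$ (namely $\eta^{-1}\D\HH$, resp.\ $\eta^{-1}\mathcal{H}_0\HH\D$), a term in $\mathrm{OPS}(-1)$ (the $\HH\D_{-1}$ piece pre/post\-composed with multiplication by $\eta^{\pm1}$), and a term in $\mathrm{OPS}(-\infty)$ (the piece built from $\J$), and multiplication by the smooth positive functions $\eta^{\pm1}$ lies in $\mathrm{OPS}(0)$. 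The content of the proposition is the invertibility of $\widecheck{\mathrm{Y}}_k$ and the injectivity of $\widecheck{\mathcal R}$, and for both I would use the same device: pairing the homogeneous equation with the density itself in the intrinsic inner product $\int_0^{2\pi}(\cdot)\,\overline{(\cdot)}\,\eta\,\mathrm{d}t$ of $L^2(\Gamma)$ and reading off the imaginary part.

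For (a), decompose $\widecheck{\mathrm{Y}}_k=\eta^{-1}(\D\HH+\J)+\tfrac{\widetilde k^2}{2}\HH\D_{-1}\eta$. The first summand is an isomorphism $H^{s+1}\to H^{s}$ (it is multiplication by $\eta^{-1}$ composed with the Fourier multiplier $\D\HH+\J$, whose symbol is $-|n|$ for $n\neq0$ and $1$ for $n=0$, hence never zero), and the second summand lies in $\mathrm{OPS}(-1)$, hence is compact $H^{s+1}\to H^s$; thus $\widecheck{\mathrm{Y}}_k$ is Fredholm of index $0$ and it suffices to prove injectivity. Since $\widecheck{\mathrm{Y}}_k$ is elliptic of order $1$ (principal symbol $-\eta(t)^{-1}|\xi|$), any $\varphi$ with $\widecheck{\mathrm{Y}}_k\varphi=0$ is smooth, so the pairing below is legitimate. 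Using that $\D\HH=\HH\D$ and $\HH\D_{-1}$ are Fourier multipliers with real symbols, the latter with nonnegative symbol $|n|^{-1}$, and that $\int_0^{2\pi}(\eta^{-1}\J\varphi)\,\overline\varphi\,\eta\,\mathrm{d}t=2\pi|\widehat\varphi(0)|^2$ (this is where the normalization $\eta^{-1}\J$, rather than $\J\eta$, is used), the identity $\int_0^{2\pi}(\widecheck{\mathrm{Y}}_k\varphi)\overline\varphi\,\eta\,\mathrm{d}t=0$ becomes
\[
 -a+\tfrac{\widetilde k^2}{2}\,b+2\pi|\widehat\varphi(0)|^2=0,\qquad a:=-\langle\D\HH\varphi,\varphi\rangle_{L^2}\ge0,\quad b:=\langle\HH\D_{-1}(\eta\varphi),\eta\varphi\rangle_{L^2}\ge0 .
\]
Taking imaginary parts and using $\mathrm{Im}\,\widetilde k^2=2k\epsilon>0$ forces $b=0$, i.e.\ $\eta\varphi\equiv\gamma$ is constant. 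Plugging $\varphi=\gamma/\eta$ back into $\widecheck{\mathrm{Y}}_k\varphi=0$ (and using $\D_{-1}\gamma=0$, $\J\gamma=\gamma$) gives $\gamma\,\eta^{-1}\big(\D\HH(\eta^{-1})+m\big)=0$ with $m:=\widehat{\eta^{-1}}(0)=\tfrac1{2\pi}\int_0^{2\pi}\eta^{-1}\,\mathrm{d}t>0$; integrating over $[0,2\pi]$ and noting that a full $\D$-derivative, hence also its Hilbert transform, has zero mean, we get $2\pi m\gamma=0$, so $\gamma=0$ and $\varphi=0$. Thus $\widecheck{\mathrm{Y}}_k$ is injective, hence invertible with bounded inverse.

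For (b), I would run the analogous argument on the two scalar components of $\widecheck{\mathcal R}(\varphi_p,\varphi_s)^\top=0$. Using $\mathcal{H}_0\HH\D(\varphi_p,\varphi_s)^\top=(\HH\D\varphi_p+\D\varphi_s,\ \D\varphi_p-\HH\D\varphi_s)^\top$ (which uses $\HH^2=-\mathrm I$), multiplying each component by $\eta$, and pairing the first with $\varphi_p$ and the second with $\varphi_s$ in $L^2(\mathrm{d}t)$, both pairings vanish. The diagonal contributions $\langle\HH\D\varphi_{p,s},\varphi_{p,s}\rangle$, the hypersingular-type terms $\tfrac{\widetilde k_s^2}{2}\langle\HH\D_{-1}(\eta\varphi_p),\eta\varphi_p\rangle$ and $-\tfrac{\widetilde k_p^2}{2}\langle\HH\D_{-1}(\eta\varphi_s),\eta\varphi_s\rangle$, and the $\J$-terms $2\pi|\widehat{\eta\varphi_{p,s}}(0)|^2$ are handled exactly as in (a), while the cross terms satisfy $\langle\D\varphi_s,\varphi_p\rangle_{L^2}=-\overline{\langle\D\varphi_p,\varphi_s\rangle_{L^2}}$ by periodic integration by parts, so they share the same imaginary part. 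Taking imaginary parts of the two vanishing pairings yields
\[
 \mathrm{Im}\,\langle\D\varphi_p,\varphi_s\rangle=-k_s\epsilon\, b_p,\qquad
 \mathrm{Im}\,\langle\D\varphi_p,\varphi_s\rangle=k_p\epsilon\, b_s,\qquad b_p,b_s:=\langle\HH\D_{-1}(\eta\varphi_{p,s}),\eta\varphi_{p,s}\rangle\ge0 ,
\]
hence $k_s b_p+k_p b_s=0$, which forces $b_p=b_s=0$; therefore $\eta\varphi_p\equiv\gamma_p$ and $\eta\varphi_s\equiv\gamma_s$ are constants. Substituting $\varphi_p=\gamma_p/\eta$, $\varphi_s=\gamma_s/\eta$ into the two equations and integrating over $[0,2\pi]$ (every full $\D$-derivative integrates to $0$, while $\int_0^{2\pi}\eta=2\pi$) gives first $2\pi\gamma_p=0$ and then, with $\gamma_p=0$, $2\pi\gamma_s=0$; so $(\varphi_p,\varphi_s)=(0,0)$ and $\widecheck{\mathcal R}$ is injective.

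The main obstacle is exactly this injectivity. One cannot bootstrap regularity of $\ker\widecheck{\mathcal R}$ directly from $\widecheck{\mathcal R}$, because its principal symbol $\eta^{-1}\mathcal{H}_0\,(-|\xi|)$ is nilpotent; the remedy is to observe that $\widecheck{\mathcal R}^{\,2}\in\mathrm{OPS}(0)$ is elliptic of order $0$—a computation that hinges crucially on the fact that $[\HH,M_g]\in\mathrm{OPS}(-\infty)$ for smooth $g$, which is what makes the ``nilpotent part squared'' smoothing—so that $\ker\widecheck{\mathcal R}\subseteq\ker\widecheck{\mathcal R}^{\,2}\subseteq C^\infty$. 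Beyond that, the remaining care is confined to the variational bookkeeping: isolating the imaginary part of the quadratic identities relies on the chosen lower-order normalizations ($\eta^{-1}\J$ in $\widecheck{\mathrm{Y}}_k$, $\mathrm{diag}(\J,\J)\eta$ in $\widecheck{\mathcal R}$) and on the standing assumptions $k_p,k_s>0$, $\epsilon>0$, which guarantee $\mathrm{Im}\,\widetilde k_p^2,\mathrm{Im}\,\widetilde k_s^2>0$.
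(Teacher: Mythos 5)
Your proof is correct and follows essentially the same route as the paper: take imaginary parts of $\eta$-weighted pairings of the homogeneous equation with the density, use $\mathrm{Im}\,\widetilde{k}_{p}^{2},\mathrm{Im}\,\widetilde{k}_{s}^{2}>0$ together with the positive semi-definiteness of $\HH\D_{-1}$ to force $\eta\varphi$ constant, and then apply the $\eta$-weighted mean (i.e.\ $\J\eta$, which is why the final integration must be done on the $\eta$-multiplied equations, as your parenthetical indicates) to kill the constants. The only differences are cosmetic or additive: you use two component pairings instead of the paper's single pairing against $(\eta\overline{\varphi}_{1},-\eta\overline{\varphi}_{2})$, and you supply the Fredholm index-zero step for the invertibility of $\widecheck{\mathrm{Y}}_k$ and the elliptic-regularity remark via $\widecheck{\mathcal{R}}^{2}$, which the paper leaves implicit.
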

\begin{proof}
We will prove only \ref{prop:2}, since \ref{prop:1} can be shown  using the same techniques. Let $(\varphi_1,\varphi_2)\in \mathcal{N}(\widecheck{\mathcal{R}})$ and consider
 \begin{eqnarray*}
  \big\langle \begin{bmatrix}\eta\overline{\varphi_1}\\-\eta\overline{\varphi_2}
          \end{bmatrix}
,\widecheck{\mathcal{R}} \begin{bmatrix} { \varphi_1}\\  { \varphi_2}
           \
          \end{bmatrix}\big\rangle
          &:=& (\overline{\varphi}_1,\HH\D\varphi_1)+ (\overline{\varphi}_2,\HH\D\varphi_2)
         + (\overline{\varphi}_1,\D\varphi_2)- (\overline{\varphi}_2,\D\varphi_1)\\
         && +\frac12\widetilde{k}_p^2(\overline{\eta\varphi}_1,\HH\D_{-1}\eta\varphi_1)+
         \frac12\widetilde{k}_s^2(\overline{\eta\varphi}_2,\HH\D_{-1}\eta\varphi_2)+
          |\J\eta\varphi_1|^2- |\J\eta\varphi_2|^2.
 \end{eqnarray*}
Using integration by parts and noticing that $\HH\D $ is positive semi-definite we have that the imaginary part of the scalar product above is given by
 \[
  \Im  \langle \begin{bmatrix}\eta\overline{\varphi_1}\\-\eta\overline{\varphi_2}
          \end{bmatrix}
,\widecheck{\mathcal{R}} \begin{bmatrix} { \varphi_1}\\  { \varphi_2}
           \
          \end{bmatrix}\rangle = \frac12\Im \left( \widetilde{k}_p^2(\overline{\eta\varphi}_1,\HH\D_{-1}\eta\varphi_1)+
         \widetilde{k}_s^2(\overline{\eta\varphi}_2,\HH\D_{-1}\eta\varphi_2)\right).
 \]
 But,
\[
  \Im \left( \widetilde{k}_p^2(\overline{\eta\varphi}_1,\HH\D_{-1}\eta\varphi_1)+
         \widetilde{k}_s^2(\overline{\eta\varphi}_2,\HH\D_{-1}\eta\varphi_2)\right) =0
\]
if and only if $\eta\varphi_1$ and $\eta\varphi_2$ are constants (notice that $\Im \widetilde{k}_p^2  ,\  \Im \widetilde{k}_s^2 > 0$ and that $\HH\D_{-1}$ is positive definite in $H_0^{1/2}:=\{\varphi\in H^{1/2}\ :\ \widehat{\varphi}(0)=0\}$).

Assume then that $(\varphi_1,\varphi_2)=(\alpha_1\eta^{-1},\alpha_2\eta^{-1})\in \mathcal{N}(\widecheck{\mathcal{R}})$. Then,
\[
 {\bf 0}=\widecheck{\mathrm{J}}\widecheck{\mathcal{R}}\begin{bmatrix}
                            \varphi_1\\
                            \varphi_2
                           \end{bmatrix}
=\mathrm{J}\eta \left(\mathrm{J}\eta\begin{bmatrix}
                            \varphi_1\\
                            \varphi_2
                           \end{bmatrix}\right)
                           =\begin{bmatrix}
                            \alpha_1\\
                            \alpha_2
                           \end{bmatrix}
\]
from which we conclude $\alpha_1=\alpha_2=0$.

\end{proof}

Having presented the construction of the regularizer operators, we are ready to introduce the parametrized combined field regularized formulation. Hence,  set
 \begin{subequations}\label{eq:theequations:hat}
  \begin{eqnarray}\label{eq:firstEq:hat}
 \widecheck{\mathcal{A}}_{\rm comb}   &=&
 \widecheck{\mathcal{A}}_{\rm DL}-    \widecheck{\mathcal{A}}_{\rm SL} \widecheck{\cal Y} ,\quad \widecheck{\mathcal{Y}}  = \begin{bmatrix}
                                              \widecheck{\mathrm{Y}}_{p} & \\
                                            & \widecheck{\mathrm{Y}}_{s}
                                       \end{bmatrix} \\
 \widecheck{\mathcal{A}}_{\rm DL}&:=&
\begin{bmatrix}
\widecheck{\W}_{p}&\frac{1}2 \widecheck{\D} + k_{s}^2\ttt \cdot \widecheck{\V}_{s}[\nnn\,\cdot\,]-\widecheck{\K}_{s}^\top \widecheck{\D}       \\
\frac{1}2 \widecheck{\D} + k_{p}^2\ttt \cdot \widecheck{\V}_{p}[\nnn\,\cdot\,]-\widecheck{\K}_{p}^\top \widecheck{\D}  &-\widecheck{\W}_{s}
\end{bmatrix}\label{eq:ADL:check}
\\
\widecheck{\mathcal{A}}_{\rm SL} &:=&
\begin{bmatrix}
-\frac{1}2 \I + \widecheck{\K}_{p}^\top &  \widecheck{\D}\widecheck{\V}_{s}  \\
\widecheck{\D} \widecheck{\V}_{p}  &\frac{1}2 \I - \widecheck{\K}_{s}^\top
\end{bmatrix}.\label{eq:ASL:check}
\end{eqnarray}
\end{subequations}
Note that the following notation convention has been employed:
\[
  \widecheck{\mathrm{A}}_k  \widecheck{\varphi}  = (\mathrm{A}_{k,\Gamma}  \varphi_\Gamma)\circ \widecheck{\bf x}      , \quad  \widecheck{\mathrm{A}}_k  \in\{\V_k,\K_k^\top, \W_k\}
\]
(As a reminder, in this work we have used the convention: $\widecheck{\varphi}=\varphi_\Gamma\circ\widecheck{\bf x}$).

Then, the boundary integral formulation, counterpart  of \eqref{eq:AR}-\eqref{eq:AR2},  is the following: For
\[
    \widecheck{f}_{\nnn} :=
   (\gamma_{\Gamma} {\bf u}^{\rm inc} \cdot {\bm n})\circ\widecheck{\bf x},\quad
   \widecheck{f}_{\ttt}:=
   (\gamma_{\Gamma} {\bf u}^{\rm inc} \cdot {\bm t})\circ\widecheck{\bf x}
\]
we solve first
 \begin{equation}\label{eq:ARcheck}
  \widecheck{\mathcal{A}}_{\rm comb} \widecheck{\mathcal{R}}\begin{bmatrix}
                              \widecheck\lambda_{p}\\
                              \widecheck\lambda_{s}
                             \end{bmatrix}
={ -}\begin{bmatrix}
   \widecheck{f}_{\nnn}  \\
   \widecheck{f}_{\ttt}
   \end{bmatrix},\quad
 \end{equation}
 followed by
 \begin{equation}\label{eq:ARcheck2}
\begin{bmatrix}
                              \widecheck\varphi_{p}\\
                              \widecheck\varphi_{s}
                             \end{bmatrix}   =
\widecheck{\mathcal{R}}\begin{bmatrix}
                              \widecheck\lambda_{p}\\
                              \widecheck\lambda_{s}
                             \end{bmatrix}.
 \end{equation}
The Helmholtz decomposition for the solution of the Navier equation  ${\bm u}$ can be next constructed from the pair $(\widecheck\varphi_{p},\
\widecheck\varphi_{s})$ using the boundary layer potential ansatz \eqref{eq:up_and_us} with $\widecheck{\mathrm{Y}}_k$ instead.

\subsection{ Parameterized Helmholtz BIO}

We start from the single layer operator for which it is possible to prove
 \begin{eqnarray*}
 \widecheck{ \mathrm{V}}_k &=&
\frac{1}2\HH\widecheck{\D}_{-1}  +\frac{k^2}{4}\HH\widecheck{\D}_{-3}+
  \widecheck{ \mathrm{V}}^{(4)}_k
 \end{eqnarray*}
where
\begin{equation}\label{eq:V5}
 \widecheck{ \mathrm{V}}^{(4)}_k=
\frac{k^2}4 (\Lambda_3- \HH\D_{-3}) \eta^3 +
 \widetilde{ \mathrm{V}}^{(4)}_k\in\mathrm{OPS}(-4)
\end{equation}
and
 \[
   \widetilde{ \mathrm{V}}^{(4)}_k \varphi = \int_0^{2\pi}  \widecheck{A}^{(4)}(\cdot,\tau)(e_1(\cdot-\tau)-1)^3\log\left(4\sin^2 \frac{\cdot-\tau}2\right)\varphi(\tau)\,{\rm d}\tau +
    \int_0^{2\pi}  \widecheck{B}^{(4)}(\cdot,\tau) \varphi(\tau)\,{\rm d}\tau
 \]
 for certain smooth bi-periodic functions $\widecheck{A}^{(4)}$ and $\widecheck{B}^{(4)}$. The proof follows from the same techniques used in Lemma \ref{lemma:2.2} but noticing now that $\eta(\tau)=|\widecheck{\bf x}'(\tau)|\ne 1$,

Next, it holds
\begin{eqnarray*}
 \widecheck{\W}_k &=&  \widecheck{\D} \widecheck{\V}_k\eta \widecheck{\D} + k^2  \ttt\cdot \widecheck{\V}_{k}( \ttt\cdot).
\end{eqnarray*}
Then, straightforward calculations show
 \begin{eqnarray*}
 \widecheck{\W}_k  &=& \frac1{2\eta} {\HH}\D - \frac{k^2}{4}\eta^{-1}{\HH}{\D}_{-2}\eta^2\D +
 \widecheck{\D}\widecheck{\V}_k^{(4)} {\D} + \frac12{k^2}\HH\D_{-1}\eta + k^2 \widecheck{\V}^{(2)}_{k, \ttt}
 \end{eqnarray*}
with
 \[
  \widecheck{\V}^{(2)}_{k, \ttt}\varphi = \int_{0}^{2\pi}  \widecheck{A}_{\ttt}(
  \replaced[id = vD]{\cdot}{t},\tau)(e_{1}(\replaced[id = vD]{\cdot}{t}-\tau)-1)\log\left(4 \sin^2\frac{\replaced[id = vD]{\cdot}{t}-\tau}2\right)\varphi(\tau)\,{\rm d}\tau+
  \int_{0}^{2\pi}  \widecheck{B}_{\ttt}\added[id = vD]{(\cdot,\tau)} \varphi(\tau)\,{\rm d}\tau\in\mathrm{OPS}(-2)
 \]
where
\[
\begin{aligned}
 \widecheck{A}_{\ttt}(t,\tau) &= \frac{1}{(e_{1}(t-\tau)-1)}\left[\widecheck{A}^{(1)}(t,\tau) (\ttt(t)\cdot\ttt(\tau)) +\frac{1}{4\pi} \right], \\
\widecheck{B}_{\ttt}(t,\tau) &= \widecheck{B}^{(1)}(t,\tau) (\ttt(t)\cdot\ttt(\tau)),
\end{aligned}
\]
$\widecheck{A}^{(1)}$, $\widecheck{B}^{(1)}$ being the functions arising in the splitting in logarithmic and smooth part of the kernel of $\widecheck{\V}_k$ .

Using
 \[
  \eta^2\D\eta^{-1} = \D \eta -2\eta'\mathrm{I}.
 \]
 we can finally rewrite
 \begin{eqnarray*}
 \widecheck{\W}_k
 &=&\frac1{2\eta} {\HH} {\D}+
  \frac{k^2}{4} {\HH}{\D}_{-1}\eta +
  \frac{k^2}4 ( (\HH\D_{-1}-\eta^{-1}\HH\D_{-1}\eta) +2\eta^{-1}\HH\D_{-2}\eta')\eta
\\
&&
+\eta^{-1}{\D}\widecheck{\V}_k^{(4)}\eta^{-1}{\D} +k^2 \widecheck{\V}^{(2)}_{k, \ttt}.
 \end{eqnarray*}

Hence,
\begin{equation}\label{exp:Wkhat}
  \widecheck{\W}_k  =  \frac1{2\eta} {\HH} {\D} +
  \frac{k^2}{4} {\HH}{\D}_{-1} \eta + \widecheck{\W}_k^{(2)}
\end{equation}
with
\begin{equation}\label{eq:Wkcheck:02}
\begin{aligned}
\widecheck{\W}_k^{(2)}\ := \ &
\frac{k^2}4
\mathrm{C}^{(2)}_{\eta}\eta
+ \frac{k^2}{4\eta} \HH\D_{-2}(\eta^2)'
  +
\eta^{-1} \D\widecheck{\V}_k^{(4)}\D +k^2\widecheck{\V}^{(2)}_{k, \ttt}
 \in
 \mathrm{OPS}(-2).
 \end{aligned}
\end{equation}
We have defined above
\begin{subequations}\label{eq:Ca}
\begin{equation}\label{eq:Ca:01}
\begin{aligned}
(\mathrm{C}_a^{(2)} \varphi)(t) &\ :=\  (\HH\D_{-1}\varphi)(t)-a^{-1}(t)(\HH\D_{-1}a\varphi)(t) \\
&\ =\
 -\frac{1}{4\pi}\int_0^{2\pi} \frac{a(t)-a(\tau)}{a(t)(e_{1}(t-\tau)-1)}(e_{1}(t-\tau)-1)\log\left(4 \sin^2\frac{t-\tau}2\right)\varphi(\tau)\,{\rm d}\tau
 \\
 &\ =\
\frac{1}{2\pi} \int_0^{2\pi} r_a(t,\tau)\rho_1(t-\tau)\varphi(\tau)\,{\rm d}\tau\in\mathrm{OPS}(-2)
\end{aligned}
\end{equation}
where
\begin{equation}\label{eq:Ca:2}
r_a(t,\tau):= \begin{cases}
-ia'(t) & t-\tau= 2\pi\ell,\ \ell\in\mathbb{Z}\\
\displaystyle\frac{a(t)-a(\tau)}{a(t)(e_{1}(t-\tau)-1)}, & \text{otherwise}\\
              \end{cases}
\end{equation}
\end{subequations}
turns out to be a smooth bi-periodic function. Notice then, equivalently,
\[
 (\mathrm{C}_a^{(2)} \varphi)(t) = \sum_{n =-\infty}^\infty \left[\widehat{\rho}_0(n)-\widehat{\rho}_0(n-1)\right] \varphi_{r_a}(t;n) e_n(t)
 = \sum_{n =-\infty}^\infty  \widehat{\rho}_1(n)  \varphi_{r_a}(t;n) e_n(t)
\]
where
\[
 \varphi_{r_a}(t;n) = \widehat{[r_a(t,\cdot)\varphi]}(n).
\]
The analysis is quite similar for the operators $\widecheck{\K}^\top_{k}$, and $ \widecheck{\V}_{k,\ttt,\nnn}:=\ttt \cdot \widecheck{\V}_{k}[\nnn\,\cdot\,]$, and therefore we omit further details  for these operators for the sake of brevity.

Summarizing, we have
 \begin{equation}\label{eq:firstEq:v2}
 \begin{aligned}
 \widecheck{\mathcal{A}}_{\rm comb} &\  :=\
 {\widecheck{\mathcal{A}}_{\rm DL,pp}^{}} - {\widecheck{\mathcal{A}}_{\rm SL,pp}^{}}\eta  \widecheck{\mathcal{Y}}+{\widecheck{\mathcal{A}}_{\rm comb}^{(2)}}.
 \end{aligned}
 \end{equation}
with
\begin{equation}\label{eq:Acomb:02}\begin{aligned}
 {\widecheck{\mathcal{A}}_{\rm DL,pp}^{}}  \ &:=\  \frac{1} {2\eta} \mathcal{H}_0  \HH\D  +\frac{1}4  \begin{bmatrix}
                                         k_{p}^2 & \\
                                            & -k_{s}^2
                                       \end{bmatrix}\HH {\D}_{-1}\eta\in\mathrm{OPS}(1) \\
{\widecheck{\mathcal{A}}_{\rm SL,pp}^{}}  \ &:=\
    \frac{1}{2\eta}\mathcal{H}_0
    +\frac{1}{4\eta}\begin{bmatrix}
                                                    &k_{s}^2 \\
                                              k_{p}^2 &
                                       \end{bmatrix}\HH {\D}_{-2}\eta^2
                                        +\frac{1}{2\eta}\begin{bmatrix}
                   0&i\\
                   i&0
                  \end{bmatrix} \J
  \in\mathrm{OPS}(0)
  \\
{\widecheck{\mathcal{A}}_{\rm comb}^{(2)}}  \ &:=    \begin{bmatrix}
     \widecheck{\W}^{(2)}_{p}                      &k_s^2 \widecheck{\V}_{s,\ttt,\nnn}-\widecheck{\K}_s^\top \widecheck{\D} \\
     k_p^2 \widecheck{\V}_{p,\ttt,\nnn}-\widecheck{\K}^\top_p\widecheck{\D}  & -\widecheck{\W}_{s}^{(2)}\\
    \end{bmatrix}  -
    \begin{bmatrix}
     \widecheck{\K}^\top _{p} & \widecheck{\D} \widecheck{\V}_{s}^{(4)}\\
    \widecheck{\D} \widecheck{\V}_{p}^{(4)} & - \widecheck{\K}^\top_{s}
    \end{bmatrix}  \widecheck{\mathcal{Y}}\in\mathrm{OPS}(-2)
\end{aligned}
\end{equation}
where, according to~\eqref{eq:Ykcheck},
\[
   \widecheck{\mathcal{Y}}  :=  \frac{1}{\eta}\begin{bmatrix*}
                                    \I&\\
                                      &\I
                                   \end{bmatrix*}
 (\D\HH+\J) +\frac12
 \begin{bmatrix*}
                                    \widetilde{k}_p^2\\
                                      &\widetilde{k}_s^2
                                   \end{bmatrix*}\HH \D _{-1}\eta.
\]
We notice, however, that unlike what happens with arc-length parametrizations, operators $ {\widecheck{\mathcal{A}}_{\rm DL,pp}^{}}$,  ${\widecheck{\mathcal{A}}_{\rm SL,pp}^{}}$, $\widehat{\mathcal{Y}}$, and neither $\widehat{\mathcal{R}}$, are  Fourier multipliers. Further manipulations are needed to decompose them as sums of Fourier multipliers and sufficiently smoothing operators.

\subsection{ Nystr\"om discretization}

We are now in the position to describe the semi-discretizations of some of the integral operators entering the regularized formulations.   First, in order to deal with ${\widecheck{\mathcal{A}}_{\rm comb}^{(2)}}$, we set
\begin{subequations}
 \begin{align}
 \widecheck{\V}_{k,N}^{(4)}\ := \ &
\frac{k^2}4 (\Lambda_3- \HH\D_{-3}) Q_N \eta^3  +\widetilde{\V}_{k,N}^{(4)}\approx
 \widecheck{\V}_{k}^{(4)}\label{eq:5.10a}
\\
 \widecheck{\W}_{k,N}^{(2)}\ := \ &
 \frac{k^2}4 \mathrm{C}_{\eta,N}^{(2)}\eta
 +\frac{k^2}4 Q_N \eta^{-1}\HH\D_{-2}Q_N (\eta^2)'\label{eq:5.10b}
  +
  \eta^{-1}{\D}Q_N\V_{k,N}^{(4)}\eta^{-1}{\D} +\widecheck{\V}^{(2)}_{k, \ttt,N}\approx
   \widecheck{\W}_{k}^{(2)} \\
  \mathrm{C}_{a,N}^{(2)}\varphi \added[id = vD]{(t)}  \ := \ &
\frac{1}{2\pi} \int_0^{2\pi} Q_N( r_a(t,\cdot)\varphi)(\tau)\rho_1(t-\tau)\,{\rm d}\tau
=\sum_{n=-\nu}^{N-\nu-1} \widehat{\rho}(n) \varphi_{r_a,N}(t;n) e_n(t)\label{eq:5.10c}
\end{align}
\end{subequations}
(see \eqref{eq:Ca}  for the last expression) as well as $\widecheck{\K}^\top_{k,N}\approx \widecheck{\K}^\top_{k} $, and $
\widecheck{\V}_{k,\ttt,\nnn,N} \approx \widecheck{\V}_{k,\ttt,\nnn}$ .

We also introduce the discretization for $\widecheck{\mathcal{R}}$ and $\widecheck{\mathcal{Y}}$, which is what is expected in view of what we have discussed so far:
\begin{eqnarray}
 \widecheck{\mathcal{R}}_N &=&
 \frac{1}{\eta}\mathcal{H}_0\HH\D
+\frac{1}{2}\begin{bmatrix}
             \widetilde{k}_s^2&\\
                            & -\widetilde{k}_p^2
            \end{bmatrix}\HH\D_{-1}Q_N \eta
            + \begin{bmatrix}
              {\J}&\\
                            & {\J}
            \end{bmatrix}Q_N\eta,\quad\mathcal{H}_0= \begin{bmatrix}
 \mathrm{I}&-\mathrm{H}
 \label{eq:RcheckN}
 \\
 -\mathrm{H}&-\mathrm{I}
 \end{bmatrix} \\
  \widecheck{\mathcal{Y}}_N &=& \begin{bmatrix}
                                    \widecheck{\mathrm{Y}}_{p,N}&\\
                                   &\widecheck{\mathrm{Y}}_{s,N}
                                  \end{bmatrix},\quad \widecheck{\mathrm{Y}}_{k,N} := \eta^{-1} \D\HH +\frac{\widetilde{k}^2}2 \HH \D _{-1}Q_N\eta
 +\eta^{-1} \J Q_N
 \label{eq:YcheckN}
\end{eqnarray}

For these operators we can prove the following convergence result:
\begin{proposition}\label{prop:5.2}
For any $q+r>1/2$, $q\ge 0$
\begin{eqnarray}                                                                                                                        \label{eq:01:prop:5.2}
\| \widecheck{\V}_{k}^{(4)}-\widecheck{\V}_{k,N}^{(4)}\|_{H^{q+r}\to H^q}&\le&C_{q,r} N^{-r-\min\{q,4\}},\\                              \label{eq:02:prop:5.2}
\| \widecheck{\W}_{k}^{(2)}-\widecheck{\W}_{k,N}^{(2)}\|_{H^{q+r}\to H^q}&\le&C_{q,r} N^{-r-\min\{q,2\}},\\                              \label{eq:03:prop:5.2}
\| \widecheck{\K}^\top_{k}-\widecheck{\K}_{k,N}^\top \|_{H^{q+r}\to H^q}&\le&C_{q,r} N^{-r-\min\{q,3\}},\\                               \label{eq:04:prop:5.2}
\| \widecheck{\V}_{k,\ttt,\nnn}-\widecheck{\V}_{k,\ttt,\nnn,N} \|_{H^{q+r}\to H^q}&\le&C_{q,r} N^{-r-\min\{q,2\}},                      \label{eq:05:prop:5.2}
\end{eqnarray}
and
\begin{eqnarray}
\| \widecheck{\mathcal{Y}}_{N}-\widecheck{\mathcal{Y}} \|_{H^{q+r}\times H^{q+r}\to H^q\times H^{q}}&\le&C_{q,r} N^{-r-\min\{q,1\}},\\   \label{eq:06:prop:5.2}
\| \widecheck{\mathcal{R}}_{N}-\widecheck{\mathcal{R}} \|_{H^{q+r}\times  H^{q+r}\to H^q\times H^q}&\le&C_{q,r} N^{-r-\min\{q,1\}},      \label{eq:07:prop:5.2}
\end{eqnarray}
where $C_{q,r}$, possibly different in each occurrence, depends only on $q$, $r$.
\end{proposition}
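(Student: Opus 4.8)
The plan is to reduce all six estimates to a single mechanism --- the product integration estimate of Proposition~\ref{prop:main:2a}, packaged in the prototype form \eqref{eq:01:remark} of Remark~\ref{remark:convergence:estimates} --- combined with the interpolation bounds \eqref{eq:QN}--\eqref{eq:QN:02} and the inverse inequalities on $\mathbb{T}_N$, exactly as these were combined in the arc-length analysis of Theorems~\ref{th:02} and~\ref{theo:4.5}. The starting observation is that, by the splittings of Section~\ref{gen} --- \eqref{exp:Wkhat}--\eqref{eq:Ca} for $\widecheck{\W}_k^{(2)}$, \eqref{eq:V5} for $\widecheck{\V}_k^{(4)}$, the analogous (omitted) splittings for $\widecheck{\K}_k^\top$ and $\widecheck{\V}_{k,\ttt,\nnn}$, and \eqref{eq:Ykcheck}, \eqref{eq:RcheckN}--\eqref{eq:YcheckN} for the regularizers --- every operator appearing on the left of the six inequalities is a finite sum of terms of the schematic shape $\mathrm{B}_1\,\mathrm{A}_\delta\,\mathrm{B}_2$, where $\mathrm{A}_\delta$ is one of the singular integral operators of Remark~\ref{remark:the:remark} (its Fourier symbol $\widehat\delta(n)$ of size $(1+|n|)^{-m}$ for the relevant $m$), or is itself a Fourier multiplier, and where each $\mathrm{B}_i$ is a composition of multiplications by fixed smooth $2\pi$-periodic functions ($\eta^{\pm1}$, $(\eta^2)'$, $\eta^3$, the smooth amplitude $r_\eta$, components of $\ttt$ and $\nnn$, \dots), of the Fourier multipliers $\HH,\Lambda_r,\D_{-r},\J$, and of the derivative $\D$. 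Correspondingly $\widecheck{\V}_{k,N}^{(4)}$, $\widecheck{\W}_{k,N}^{(2)}$, \dots\ are the very same sums with $\mathrm{A}_\delta$ replaced by $\mathrm{A}_{\delta,N}$ and with one interpolation operator $Q_N$ inserted at every non-multiplier slot of the $\mathrm{B}_i$'s --- to the left of each leftmost $\D$ and around each multiplication by $\eta$, but never to the right of the rightmost $\D$ --- precisely as dictated by \eqref{eq:5.10a}--\eqref{eq:5.10c}, \eqref{eq:RcheckN}, \eqref{eq:YcheckN} and as in \eqref{eq:Wkn}.

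For a single such term I rely on the splitting
\begin{align*}
 \mathrm{B}_1\mathrm{A}_\delta\mathrm{B}_2-\mathrm{B}_{1,N}\mathrm{A}_{\delta,N}\mathrm{B}_{2,N}
 &= \mathrm{B}_1\mathrm{A}_\delta\,(\mathrm{B}_2-\mathrm{B}_{2,N})
  + \mathrm{B}_1\,(\mathrm{A}_\delta-\mathrm{A}_{\delta,N})\,\mathrm{B}_{2,N}\\
 &\quad + (\mathrm{B}_1-\mathrm{B}_{1,N})\,\mathrm{A}_{\delta,N}\mathrm{B}_{2,N},
\end{align*}
which reduces matters to: (i) the middle term, estimated by \eqref{eq:01:remark}/Proposition~\ref{prop:main:2a} with $\delta\in\{2\pi\delta_0,\,2\pi\delta_0',\,2\pi\rho_j\}$, applied at the Sobolev level shifted by the (integer) orders of $\mathrm{B}_1$ and $\mathrm{B}_2$; and (ii) the errors $\mathrm{B}_i-\mathrm{B}_{i,N}$, each of which is an $(\mathrm{I}-Q_N)$ acting on a smooth-function-times-density (controlled by \eqref{eq:QN}) plus --- when $\mathrm{B}_i$ contains a multiplication --- a benign $Q_N$-of-a-product remainder. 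Since every fixed smooth multiplication is bounded on all $H^s$, since $\HH,\Lambda_r,\D_{-r},\J,\D$ shift Sobolev indices by their orders, and since $Q_N$ obeys \eqref{eq:QN:01}--\eqref{eq:QN:02} and the inverse inequalities, each piece is $O(N^{-r-\min\{q,m_\ast\}})$ with $m_\ast$ the order of the composite term; this is the same bookkeeping as in \eqref{eq:02a:proof:main:Th}--\eqref{eq:02d:proof:main:Th}. The only parametrization-specific inputs are the harmless extra smooth factors $\eta^{\pm1},\dots$, and the already-recorded facts that $\eta\HH-\HH\eta$, $\HH_{\bf x}-\HH_{\widecheck{\bf x}}$ and $\eta^{-1}\J-\widecheck{\J}$ are of order $-\infty$, which let one replace $\widecheck{\mathcal R}$ and $\widecheck{\mathcal Y}$ by their ``straightened'' forms (the right-hand sides of \eqref{eq:RcheckN}, \eqref{eq:YcheckN}) up to a smoothing error.

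I would then run this term by term, in the order: first the estimate for $\widecheck{\V}_k^{(4)}$ (needed by the others), then $\widecheck{\W}_k^{(2)}$, then $\widecheck{\K}_k^\top$ and $\widecheck{\V}_{k,\ttt,\nnn}$, then $\widecheck{\mathcal Y}_N$ and $\widecheck{\mathcal R}_N$. For $\widecheck{\V}_k^{(4)}$: in \eqref{eq:V5} the first summand is a Fourier multiplier of order $\le-4$ post-composed with multiplication by $\eta^3$, so its discretization error is a pure $(\mathrm{I}-Q_N)$ error hit by a smoothing multiplier, hence $O(N^{-r-\min\{q,4\}})$ by \eqref{eq:QN}; the second summand $\widetilde{\V}_k^{(4)}$ is an $\mathrm{A}_\delta$ with $\delta=2\pi\rho_4$ ($m=4$ by \eqref{eq:fourier:rho}) between smooth multiplications, so the propagation step applies. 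For $\widecheck{\W}_k^{(2)}$ (see \eqref{eq:Wkcheck:02}, \eqref{eq:5.10b}) one uses the just-proved $\widecheck{\V}_k^{(4)}$ bound \emph{two Sobolev orders higher}, to absorb the two tangential derivatives surrounding $\widecheck{\V}_k^{(4)}$ exactly as in \eqref{eq:02a:proof:main:Th}, together with: the term $\mathrm C^{(2)}_\eta\eta$, which --- upon using $(e_1(t-\tau)-1)\rho_1(t-\tau)=\rho_2(t-\tau)$, cf.\ \eqref{eq:fourier:rho} --- is an $\mathrm{A}_\delta$ with smooth amplitude $r_\eta$ and $\delta=2\pi\rho_2$, hence of effective order $-2$; the Fourier-multiplier sandwich $\eta^{-1}\HH\D_{-2}(\eta^2)'$, of order $-2$; and $\widecheck{\V}^{(2)}_{k,\ttt}$, again an $\mathrm{A}_\delta$ with $m=2$. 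For $\widecheck{\K}_k^\top$ and $\widecheck{\V}_{k,\ttt,\nnn}$: these are $\mathrm{A}_\delta$'s with $\delta=2\pi\rho_3$ ($m=3$) and $\delta=2\pi\rho_2$ ($m=2$) respectively, between smooth multiplications, so the propagation step gives $\min\{q,3\}$ and $\min\{q,2\}$ directly. For $\widecheck{\mathcal Y}_N$ and $\widecheck{\mathcal R}_N$: here the ``$\mathrm{A}_\delta$'' slots are the identity or the exact Fourier multiplier $\HH\D_{-1}$, so the only errors come from the replacements of $\eta$ by $Q_N\eta$ and of $\mathrm{I}$ by $Q_N$; composing the leftover $(\mathrm{I}-Q_N)$-times-(multiplication by $\eta$) with the order $-1$ operator $\HH\D$, respectively $\HH\D_{-1}$, and optimising the intermediate Sobolev index gives $O(N^{-r-\min\{q,1\}})$, every $\J$-term is $O(N^{-\infty})$, and the term $\eta^{-1}\mathcal{H}_0\HH\D$ is identical in $\widecheck{\mathcal R}$ and $\widecheck{\mathcal R}_N$ and contributes nothing.

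I expect the main obstacle to be the $\widecheck{\W}_k^{(2)}$ estimate: one must (a) identify the correct effective convolution kernel of each summand so that Proposition~\ref{prop:main:2a} returns the advertised order --- in particular recognise, via the $(e_1-1)$ factor, that $\mathrm C^{(2)}_\eta$ carries a $\rho_2$-kernel and not a $\rho_1$-kernel, without which one would only obtain $\min\{q,1\}$ in place of $\min\{q,2\}$ --- and (b) place the $Q_N$'s asymmetrically around the two derivatives in the $\widecheck{\V}_k^{(4)}$-sandwich so that, as in \eqref{eq:Wkn} and \eqref{eq:02a:proof:main:Th}, no power of $N$ is lost when passing the right-most, un-interpolated $\D$ through. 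Everything else is a routine, if lengthy, repetition of the arc-length bookkeeping.
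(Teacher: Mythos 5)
Your proposal follows essentially the same route as the paper's own (very terse) proof: a term-by-term application of the interpolation estimates \eqref{eq:QN}--\eqref{eq:QN:02} and of Proposition~\ref{prop:main:2a} in the prototype form of Remark~\ref{remark:convergence:estimates}, with the single genuinely new ingredient being the estimate for $\mathrm{C}^{(2)}_\eta$ (which you correctly identify as an order $-2$, $\rho_2$-kernel product-integration operator, i.e.\ exactly \eqref{eq:Cetans:0}), and with the same asymmetric placement of $Q_N$ around the derivatives in the $\widecheck{\W}_k^{(2)}$ sandwich as in \eqref{eq:Wkn} and \eqref{eq:02a:proof:main:Th}. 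Two harmless slips: the $\J$-terms in $\widecheck{\mathcal Y}_N$, $\widecheck{\mathcal R}_N$ give errors $O(N^{-q-r})$ (limited by the interpolation error), not $O(N^{-\infty})$, and the operator you compose with in those terms is $\HH\D_{-1}$ (order $-1$), not $\HH\D$; neither affects the stated exponents.
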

\begin{proof}
These results follow from a careful application of the error estimates for the trigonometric interpolating operator cf. \eqref{eq:QN} and  Proposition \ref{prop:main:2a} to the  different terms involved which includes the new estimate
\begin{equation}\label{eq:Cetans:0}
 \| \mathrm{C}_{\eta}^{(2)}-\mathrm{C}_{\eta,N}^{(2)}\|_{H^{q+r}\to H^q}\le C N^{-r-\min\{q,2\}}.
\end{equation}
(Recall also Remark \ref{remark:convergence:estimates}).
\end{proof}

 Next, simple, but tedious calculations, show that  the principal part of \eqref{eq:firstEq:v2} can be rewritten as
\[
\begin{aligned}
{\widecheck{\mathcal{A}}_{\rm DL,pp}^{}}-  {\widecheck{\mathcal{A}}_{\rm SL,pp}^{}}\eta  \widecheck{\mathcal{Y}}
 \ =\ &
 \widecheck{\mathcal{A}}^{}_{\rm pp}-\frac1{ \eta} \mathcal{H}_0
\mathrm{J}\\
 & + \frac{1}{4}\begin{bmatrix}
                          & k_s^2\\
                    k_p^2 &
                   \end{bmatrix}
                   \left(  \eta^{-1} \HH\D_{-2}\eta^2\HH\D +\D_{-1}\eta+\eta^{-1} \HH\D_{-2}\eta^2\J\right)
                 \\
   & + \frac{1}4
                 \begin{bmatrix}
                     & \widetilde{k}_s^2\\
                     \widetilde{k}_p^2   &
                 \end{bmatrix}(\HH-\eta^{-1} \HH \eta) \HH\D_{-1}\eta\\
& + \frac{1}8\begin{bmatrix}
                         & k_s^2 \widetilde{k}_s^2\\
                   k_p^2 \widetilde{k}_p^2&
                  \end{bmatrix}\eta^{-1}\HH\D_{-2}\eta^3 \HH\D_{-1}\eta
 +\frac1{2\eta}\begin{bmatrix*}
                                     &  i\\
                                     i&
                                  \end{bmatrix*}
\mathrm{J}
+\frac1{2\eta} \mathcal{H}_0
\mathrm{J}
\\
&
 + \frac{1}4
                 \begin{bmatrix}
                     & i\widetilde{k}_s^2\\
                     i\widetilde{k}_p^2   &
                 \end{bmatrix} \eta^{-1}\J\eta \HH\D_{-1}\eta
\end{aligned}
\]
with
\[
\begin{aligned}
 \widecheck{\mathcal{A}}^{}_{\rm pp}\ =\ &
\frac{1}{\eta}\mathcal{H}_0 (\HH\D+\mathrm{J})
 +\frac{1}4\begin{bmatrix}
              (k_p^2+\widetilde{k}_p^2)\mathrm{I} & (k_s^2-\widetilde{k}_s^2)\HH\\
              (k_p^2-\widetilde{k}_p^2)\HH        & -(k_s^2+\widetilde{k}_s^2)\mathrm{I}
                                          \end{bmatrix}\HH\D_{-1}\eta\in\mathrm{OPS}(1).
\end{aligned}
\]
We can simply further the expressions above to facilitate the analysis. We first note that
\[
 \mathcal{H}_0
\mathrm{J}
= \begin{bmatrix}
           1&-i\\
           -i&-1
          \end{bmatrix}\J.
\]
On the other hand, recall  that $\mathrm{C}^{(2)}_a\in\mathrm{OPS}(-2)$ as specified in~\eqref{eq:Ca}. We can then define accordingly
\[
 \mathrm{C}^{(3)}_a := \HH\D_{-2}-a^{-1} \HH\D_{-2} a
 \in\mathrm{OPS}(-3),\quad
 \mathrm{C}^{(\infty)}_a := \HH -a^{-1} \HH  a\in\mathrm{OPS}(-\infty).
\]
Indeed, and since
\[
 \HH\D_{-2}\varphi:= -i\sum_{n\ne 0}\frac{1}{n^2}\widehat{\varphi}(n)e_{n}\deleted[id=vD]{(t)} \quad
 \HH \varphi:=
i \bigg[ \widehat{\varphi}(0)+\sum_{n\ne 0}\mathop{\rm \rm sign}(n)\widehat{\varphi}(n)e_{n}\deleted[id=vD]{(t)}\bigg]
\]
it is straightforward to show that (see also \eqref{eq:5.10c})
\begin{eqnarray}
 \mathrm{C}^{(3)}_a \varphi(t) &=&  i\left[  \varphi_{r_a}(t;0) -\varphi_{r_a}(t;1)e_1(t)+  \sum_{n\ne 0,1} \frac{2n-1}{n^2(n-1)^2} \varphi_{r_a}(t;n) e_n(t)
  \right]\label{eq:def:C3}
  \\
  \mathrm{C}^{(\infty)}_a\varphi(t) &=& 2i   \varphi_{r_a}(t;0) =\frac{i}{\pi} \int_{0}^{2\pi} r_a(t,\tau)\varphi(\tau)\,{\rm d}\tau.\label{eq:def:Cinfty}
\end{eqnarray}
Besides, noticing  that $\HH^2 =-\I$ and using the identity
\[
 \D_{-1}\eta  =   -\HH\D_{-2}\HH\D\eta=-\HH\D_{-2}\HH\eta'-\HH\D_{-2}\HH\eta\D=
 \D_{-2}\eta' -\HH\D_{-2}\HH\eta\D,
\]
we can rewrite
\[
\begin{aligned}
  \eta^{-1} \HH\D_{-2}\eta^2\HH\D +\D_{-1}\eta\ & =\   (\eta^{-1} \HH\D_{-2}\eta  -\HH\D_{-2})\eta\HH\D +
  \HH\D_{-2}\eta(\HH-\eta^{-1}\HH\eta)\D +\D_{-2}\eta' \\
  &=\ -  \mathrm{C}^{(3)}_{\eta}\eta\HH\D +\HH\D_{-2}\eta\mathrm{C}^{(\infty)}_{\eta}\D+\D_{-2}\eta'.
\end{aligned}
\]

In conclusion, we can write
 \[{\widecheck{\mathcal{A}}_{\rm DL,pp}^{}}-  {\widecheck{\mathcal{A}}_{\rm SL,pp}^{}}\eta  \widecheck{\mathcal{Y}}
 \ =\
 \widecheck{\mathcal{A}}_{\rm pp}+
 \widecheck{\mathcal{A}}_{\rm pp}^{(2)}
\]
where
\[
\begin{aligned}
 \widecheck{\mathcal{A}}^{(2)}_{\rm pp}\ =\ &  -\frac{1}{4}\begin{bmatrix}
                         & k_s^2\\
                   k_p^2 &
                  \end{bmatrix}
                  \left(\mathrm{C}^{(3)}_\eta\eta \HH\D
                  - \HH\D_{-2} \eta \mathrm{C}^{(\infty)}_{\eta}\D-  \D_{-2}\eta'  -
                   \eta^{-1}\HH \D_{-2}\eta^2 \J  \right)
                 \\
    &+  \frac{1}4
                 \begin{bmatrix}
                   & \widetilde{k}_p^2      \\
                     \widetilde{k}_s^2 &
                 \end{bmatrix}\mathrm{C}^{(\infty)}_\eta\HH\D_{-1}\eta
                 + \frac{1}8\begin{bmatrix}
                         & k_s^2 \widetilde{k}_s^2\\
                   k_p^2 \widetilde{k}_p^2&
                  \end{bmatrix}\eta^{-1}\HH\D_{-2}\eta^3 \HH\D_{-1}\eta
               + \frac12\begin{bmatrix*}
                                   -1 & 2i\\
                                   2i& 1
                                  \end{bmatrix*}
                                     \eta^{-1}\mathrm{J}\\
    & + \frac{1}4
                 \begin{bmatrix}
                   & i\widetilde{k}_s^2      \\
                     i\widetilde{k}_p^2 &
                 \end{bmatrix}\eta^{-1} \J\eta\HH\D_{-1}\eta  \\
\in \ &\mathrm{OPS}(-2).
\end{aligned}
\]
After (right-)multiplying the principal part $ \widecheck{\mathcal{A}}_{\rm pp}$  by the regularizer
\begin{equation}
\widecheck{\mathcal{R}} =
 \frac{1}{\eta}\mathcal{H}_0\HH\D
+\frac{1}{2}\begin{bmatrix}
             \widetilde{k}_s^2&\\
                            & -\widetilde{k}_p^2
            \end{bmatrix}\HH\D_{-1}\eta
            +  \begin{bmatrix}
              {\J}&\\
                            & {\J}
            \end{bmatrix}\eta
            \end{equation}
and using that $\mathcal{H}_0^2=0$,   we obtain that with the following operator
 \begin{equation}\label{eq:Hps}
 \mathcal{H}_{p,s}:=-\frac{1}4\begin{bmatrix}
  \left(k_p^2+k_s^2+\widetilde{k}_p^2+ \widetilde{k}_s^2\right)\mathrm{I}&
    -\left(k_p^2+k_s^2-\widetilde{k}_p^2- \widetilde{k}_s^2\right)\mathrm{H} \\
    \left(k_p^2+k_s^2-\widetilde{k}_p^2- \widetilde{k}_s^2\right)\mathrm{H}&
   \left(k_p^2+k_s^2+\widetilde{k}_p^2+ \widetilde{k}_s^2\right)\mathrm{I}
\end{bmatrix},
 \end{equation}
 which will turn out to be the principal part of $\widecheck{{\cal A}}_{\rm pp}\widecheck{\cal R}$ (see also Theorem \ref{theo:01}), it holds
\[
\begin{aligned}
 \widecheck{\mathcal{A}}^{}_{\rm pp}\widecheck{\mathcal{R}} \
 = \ &\mathcal{H}_{p,s} + \underbrace{\frac18 \begin{bmatrix}
     \widetilde{k}_s^2(k_p^2+\widetilde{k}_p^2)\I & -\widetilde{k}_p^2(k_s^2-\widetilde{k}_s^2)\HH \\
          \widetilde{k}_s^2(k_p^2-\widetilde{k}_p^2)\I & \widetilde{k}_p^2(k_s^2+\widetilde{k}_s^2)\HH \\
              \end{bmatrix} (\HH\D_{-1}\eta)^2}_{\widecheck{\mathcal{A}}_{\rm pp,\mathcal{R}}^{(2)}}
  \\
  &+\frac{1}2 \begin{bmatrix}
                    &   \widetilde{k}_p^2 \\
                    -\widetilde{k}_s^2  &
                  \end{bmatrix} \mathrm{C}_\eta^\infty
  +\frac{1}{\eta}\begin{bmatrix*}
                   \I & \HH \\
                  -\HH& \I
                 \end{bmatrix*}\D\mathrm{C}_\eta^\infty \eta^{-1} \HH\D
                 \\
              &
              +\begin{bmatrix}
                   1 & -i\\
                   -i & -1
\end{bmatrix}  \eta^{-1}\mathrm{J}\eta \begin{bmatrix}
                   \HH\D &\D\\
                   \D & - \HH\D
\end{bmatrix} \\
  &
   +\frac{1}4\begin{bmatrix}
  \left(k_p^2+k_s^2+\widetilde{k}_p^2- \widetilde{k}_s^2\right) &
    -\left(k_p^2+k_s^2+\widetilde{k}_p^2- \widetilde{k}_s^2\right)i \\
    \left(k_p^2+k_s^2-\widetilde{k}_p^2+ \widetilde{k}_s^2\right)i&
   \left(k_p^2+k_s^2-\widetilde{k}_p^2+ \widetilde{k}_s^2\right)
\end{bmatrix} \mathrm{J}
  \\
  &
  +\frac12 \begin{bmatrix}
  \widetilde{k}_s^2  &      \widetilde{k}_p^2  i \\
   -\widetilde{k}_s^2i&
    \widetilde{k}_p^2
\end{bmatrix}
\eta^{-1}\J\eta
  +\begin{bmatrix*}
                   1 & -i\\
                   -i&-1
                  \end{bmatrix*}\eta^{-1}\J\eta
     \\
  &
  + \frac{1}4\begin{bmatrix}
  \left(k_p^2+ \widetilde{k}_p^2\right)\I &
     \left(k_s^2- \widetilde{k}_s^2\right)\HH \\
    \left(k_p^2- \widetilde{k}_p^2\right)\HH&
   -\left(k_s^2+ \widetilde{k}_s^2\right)\I
\end{bmatrix} \HH\D_{-1}\eta\mathrm{J}\eta\\
     =\ & \mathcal{H}_{p,s} +  \widecheck{\mathcal{A}}_{\rm pp,\mathcal{R}}^{(2)} +\widecheck{\mathcal{A}}_{\rm pp,\mathcal{R}}^{(\infty)}.
\end{aligned}
\]
In short, the associated pseudodifferential operator to \eqref{eq:Acomb:02} can be expressed as
\begin{equation}\label{eq:5.25}
\widecheck{\mathcal{A}}_{\rm comb}\widecheck{\mathcal{R}}=
 \mathcal{H}_{p,s}+\underbrace{   \widecheck{\mathcal{A}}_{\rm pp,\mathcal{R}}^{(2)} +\widecheck{\mathcal{A}}_{\rm pp,\mathcal{R}}^{(\infty)}+\widecheck{\mathcal{A}}^{(2)}_{\rm pp} \widecheck{\mathcal{R}}  +\widecheck{\mathcal{A}}^{(2)}_{\rm comb}\widecheck{\mathcal{R}}}_{=: \mathcal{A}_{{\rm comb},\mathcal{R}}^{(1)}}
\end{equation}
where the key feature in the latter decomposition is
\[
  \mathcal{A}^{(1)}_{{\rm comb},\mathcal{R}}  \in\mathrm{OPS}(-1).
\]
We point out that the $ \mathcal{H}_{p,s}$ is a  continuous Fourier multiplier operator with a continuous inverse from $H^s\times H^s$ into itself. On the other hand, the operators $\widecheck{\mathcal{A}}_{\rm pp,\mathcal{R}}^{(\infty)}$ and $\widecheck{\mathcal{A}}_{\rm comb}^{(2)}$ depends only on $k_p$, $k_s$ and  $\eta$. Finally, it is the operator $\widecheck{\mathcal{A}}^{(2)}$ that retains the major dependence on the parametrization $\widecheck{\bf x}$ of the curve $\Gamma$ via the regular part of the BIOs.

In order to present our full Nystr\"om discretization of the regularized combined field equations \eqref{eq:ARcheck} we  introduce
\begin{subequations}\label{eq:PartsOfARN}
\begin{eqnarray}
\widecheck{\mathcal{A}}_{{\rm pp},\mathcal{R},N}^{(2)} &\ :=\ &
 \frac18 \begin{bmatrix}
     \widetilde{k}_s^2(k_p^2+\widetilde{k}_p^2)\I & -\widetilde{k}_p^2(k_s^2-\widetilde{k}_s^2)\HH \\
          \widetilde{k}_s^2(k_p^2-\widetilde{k}_p^2)\I & \widetilde{k}_p^2(k_s^2+\widetilde{k}_s^2)\HH \\
              \end{bmatrix} (\HH\D_{-1}Q_N \eta)^2
              \nonumber\\
              %
              %
\\
\widecheck{\mathcal{A}}_{{\rm pp},\mathcal{R},N}^{(\infty)}&\ :=\ &
\deleted[id=vD]{+}\frac{1}2 \begin{bmatrix}
                    &   \widetilde{k}_p^2 \\
                    -\widetilde{k}_s^2  &
                  \end{bmatrix} \mathrm{C}_\eta^\infty
  +\frac{1}{\eta}\begin{bmatrix*}
                   \I & \HH \\
                  -\HH& \I
                 \end{bmatrix*}\D Q_N \mathrm{C}_{\eta,N}^\infty \eta^{-1} \HH\D
                 \\
              &&
              +\begin{bmatrix}
                   1 & -i\\
                   -i & -1
\end{bmatrix}  \eta^{-1}\mathrm{J} Q_N \eta \begin{bmatrix}
                   \HH\D &\D\\
                   \D & - \HH\D
\end{bmatrix}
  \nonumber   \\
  &&
   +\frac{1}4\begin{bmatrix}
  \left(k_p^2+k_s^2+\widetilde{k}_p^2- \widetilde{k}_s^2\right) &
    -\left(k_p^2+k_s^2+\widetilde{k}_p^2- \widetilde{k}_s^2\right)i \\
    \left(k_p^2+k_s^2-\widetilde{k}_p^2+ \widetilde{k}_s^2\right)i&
   \left(k_p^2+k_s^2-\widetilde{k}_p^2+ \widetilde{k}_s^2\right)
\end{bmatrix} \mathrm{J} Q_N
  \nonumber   \\
  &&
  +\frac12 \begin{bmatrix}
  \widetilde{k}_s^2  &      \widetilde{k}_p^2  i \\
   -\widetilde{k}_s^2i&
    \widetilde{k}_p^2
\end{bmatrix}
\eta^{-1}\J  Q_N  \eta
  +\begin{bmatrix*}
                   1 & -i\\
                   -i&-1
                  \end{bmatrix*}\eta^{-1}\J  Q_N \eta
  \nonumber   \\
  &&
  + \frac{1}4\begin{bmatrix}
  \left(k_p^2+ \widetilde{k}_p^2\right)\I &
     \left(k_s^2- \widetilde{k}_s^2\right)\HH \\
    \left(k_p^2- \widetilde{k}_p^2\right)\HH&
   -\left(k_s^2+ \widetilde{k}_s^2\right)\I
\end{bmatrix} \HH\D_{-1} Q_N \eta\mathrm{J} Q_N \eta.\\
   \widecheck{\mathcal{A}}^{(2)}_{{\rm pp},N} &\ :=\ &  -\frac{1}{4}\begin{bmatrix}
                         & k_s^2\\
                   k_p^2 &
                  \end{bmatrix}
                  \left(\mathrm{C}^{(3)}_{\eta,N}\eta \HH\D
                  - \HH\D_{-2} \eta \mathrm{C}^{(\infty)}_{\eta,N}\D-  \D_{-2}Q_N\eta'  -
                   \eta^{-1}\D_{-2}Q_N\eta^2 \J   \right)
               \nonumber  \\
    &&+  \frac{1}4
                 \begin{bmatrix}
                   & \widetilde{k}_p^2      \\
                     \widetilde{k}_s^2 &
                 \end{bmatrix}\mathrm{C}^{(\infty)}_{\eta,N}\HH\D_{-1}Q_N\eta
                 + \frac{1}8\begin{bmatrix}
                         & k_s^2 \widetilde{k}_s^2\\
                   k_p^2 \widetilde{k}_p^2&
                  \end{bmatrix}\eta^{-1}\HH\D_{-2}Q_N\eta^3 \HH\D_{-1}Q_N \eta
\nonumber
 \\
 &&              + \frac12\begin{bmatrix*}
                                   -1 & 2i\\
                                   2i& 1
                                  \end{bmatrix*}
                                     \eta^{-1}\mathrm{J}
        + \frac{1}4 \begin{bmatrix}
                                          & i\widetilde{k}_s^2      \\
                       i\widetilde{k}_p^2 &
                  \end{bmatrix}\eta^{-1} \J Q_N \eta\HH\D_{-1}Q_N\eta
\\
\widecheck{\mathcal{R}}_N &\ :=\ &
 \frac{1}{\eta}\mathcal{H}_0\HH\D
+\frac{1}{2}\begin{bmatrix}
             \widetilde{k}_s^2&\\
                            & -\widetilde{k}_p^2
            \end{bmatrix}\HH\D_{-1}Q_N \eta
            + \begin{bmatrix}
              {\J}&\\
                            & {\J}
            \end{bmatrix}Q_N\eta,\quad\mathcal{H}_0= \begin{bmatrix}
 \mathrm{I}&-\mathrm{H} \\
 -\mathrm{H}&-\mathrm{I}
 \end{bmatrix} \\
{\widecheck{\mathcal{A}}_{{\rm comb},N}^{(2)}} &\ :=\ &    \begin{bmatrix}
     \widecheck{\W}^{(2)}_{p,N}                      &k_s^2 \widecheck{\V}_{s,\ttt,\nnn,N}-\widecheck{\K}_{s,N}^\top \eta^{-1}\D\\
     k_p^2 \widecheck{\V}_{p,\ttt,\nnn,N}-\widecheck{\K}^\top_{p,N}\eta^{-1}\D & -\widecheck{\W}_{s,N}^{(2)}\\
    \end{bmatrix} \nonumber\\
    &&-
    \begin{bmatrix}
     \widecheck{\K}^\top _{p,N} & \eta^{-1}{\D}Q_N  \widecheck{\V}_{s,N}^{(4)}\\
     \eta^{-1}{\D} Q_N \V_{p,N}^{(4)} & - \widecheck{\K}^\top_{s,N}
    \end{bmatrix}   \mathcal{Y}_N,
    \\
  \widecheck{\mathcal{Y}}_ {N}  &\ :=\ & \begin{bmatrix}
                                    \widecheck{\mathrm{Y}}_{p,N}&\\
                                   &\widecheck{\mathrm{Y}}_{s,N}
                                  \end{bmatrix},\quad \widecheck{\mathrm{Y}}_{k,N} = \eta^{-1} \D\HH +\frac{\widetilde{k}^2}2 \HH \D _{-1}Q_N\eta
 +\eta^{-1} \J Q_N
\end{eqnarray}
\end{subequations}
In the expression above,    $\mathrm{C}_{\eta,N}^{(\infty)}$ and $\mathrm{C}_{\eta,N}^{(3)}$ are the discretizations of $\mathrm{C}_{\eta}^{(\infty)}$ and $\mathrm{C}_{\eta}^{(3)}$ constructed using Proposition \ref{prop:main:2a}, as  $\mathrm{C}_{\eta,N}^{(2)}$, introduced in \eqref{eq:5.10c}, is of  $\mathrm{C}_{\eta}^{(2)}$.  Hence, it is easy to check that it holds
\begin{equation}\label{eq:Cetans}
 \| \mathrm{C}_{\eta}^{(\alpha)}-\mathrm{C}_{\eta,N}^{(\alpha)}\|_{H^{q+r}\to H^q}\le C N^{-r-\min\{\alpha,q\}},\quad \alpha\in\{3,\infty\}.
\end{equation}

Then the method is as follows: solve
\begin{equation}\label{eq:theMethod:check}
 \mathcal{A}_{{\rm comb},\mathcal{R},N}  \begin{bmatrix}
                                                            \widecheck{\lambda}_{p,N}\\
                                                            \widecheck{\lambda}_{s,N}
                                                           \end{bmatrix}
=
\left( \mathcal{H}_{p,s} + \mathcal{Q}_N
\mathcal{A}_{{\rm comb},\mathcal{R},N}^{(1)} \right)\begin{bmatrix}
                                                            \widecheck{\lambda}_{p,N}\\
                                                            \widecheck{\lambda}_{s,N}
                                                           \end{bmatrix}
=\mathcal{Q}_N
\begin{bmatrix}
     \widecheck{f}_{\nnn}\\
     \widecheck{f}_{\ttt}
   \end{bmatrix}.
\end{equation}
where
\begin{equation}\label{eq:ARN}
 \mathcal{A}_{{\rm comb},\mathcal{R},N}^{(1)}=
 \widecheck{\mathcal{A}}_{\rm pp,\mathcal{R},N}^{(2)}  +
 \widecheck{\mathcal{A}}_{{\rm pp},\mathcal{R},N}^{(\infty)}+\widecheck{\mathcal{A}}^{(2)}_{{\rm pp}, N}\widecheck{\mathcal{R}}_N  +\widecheck{\mathcal{A}}^{(2)}_{{\rm comb},N}\widecheck{\mathcal{R}}_N\in\mathrm{OPS}(-1)
\end{equation}
and construct next
\begin{equation}\label{eq:ARN:02}
\begin{bmatrix*}
\widecheck{\varphi}_{p,N}\\
\widecheck{\varphi}_{s,N}
\end{bmatrix*}:= \widecheck{\cal R}_N\begin{bmatrix}
                                                            \widecheck{\lambda}_{p,N}\\
                                                            \widecheck{\lambda}_{s,N}
                                                           \end{bmatrix}.
  \end{equation}
\deleted[id=vD]{Notice that again,}
Notice again that since ${\cal H}_{p,q}$ is a Fourier multiplier operator,  and by construction of the operators involved, the pairs $(\widecheck{\lambda}_{p,N},\ \widecheck{\lambda}_{s,N}), \ (\widecheck{\varphi}_{p,N},\ \widecheck{\varphi}_{s,N})$ belong to $\mathbb{T}_N\times \mathbb{T}_N$. That is, the  unknowns in \eqref{eq:ARN} and the densities in \eqref{eq:ARN:02} are uniquely determined by the values of these functions at the grid points $ \{m h\}$, $h=2\pi/N$.

Finally, we are in the position to establish  the approximation properties, in operator norm of  $\mathcal{A}_{{\rm comb},\mathcal{R},N }^{(1)}$ cf. \eqref {eq:ARN} to $\mathcal{A}_{{\rm comb},\mathcal{R} }^{(1)}$ cf. \eqref{eq:5.25}.

\begin{theorem} For any $q,\ r\ge 0$ with $q+r>3/2$, and $N$ sufficiently large, there exists $C_{q,r}$ so that
\begin{eqnarray*}
 \left\|\mathcal{A}_{{\rm comb},\mathcal{R} }^{(1)} -\mathcal{Q}_N \mathcal{A}_{{\rm comb},\mathcal{R},N}^{(1)}
 \right\|_{H^{q+r}\times H^{q+r} \to H^q\times H^q }
 \le C N^{-r-\min\{ 1,q-2\}}.
\end{eqnarray*}
\end{theorem}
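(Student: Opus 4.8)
The argument follows the blueprint of the proof of Theorem~\ref{th:02}, now in the non-arclength setting; the principal structural simplification is that \eqref{eq:ARN} contains no interpolation operator inserted \emph{between} the discretized boundary integral operators and the discretized regularizer, so no inverse inequality is paid at that junction. Using \eqref{eq:5.25} and \eqref{eq:ARN}, one would decompose $\mathcal{A}_{{\rm comb},\mathcal{R}}^{(1)}$ and $\mathcal{A}_{{\rm comb},\mathcal{R},N}^{(1)}$ into the four matching pieces $\widecheck{\mathcal{A}}_{\rm pp,\mathcal{R}}^{(2)}$, $\widecheck{\mathcal{A}}_{\rm pp,\mathcal{R}}^{(\infty)}$, $\widecheck{\mathcal{A}}_{\rm pp}^{(2)}\widecheck{\mathcal{R}}$, $\widecheck{\mathcal{A}}_{\rm comb}^{(2)}\widecheck{\mathcal{R}}$, each of order $\le-1$. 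For a generic piece $\mathrm{L}$ one writes
\[
\mathrm{L}-\mathcal{Q}_N\mathrm{L}_N=(\mathcal{I}-\mathcal{Q}_N)\mathrm{L}+\mathcal{Q}_N(\mathrm{L}-\mathrm{L}_N),
\]
bounds the first summand by $\|\mathcal{I}-\mathcal{Q}_N\|_{H^{q+r+1}\times H^{q+r+1}\to H^q\times H^q}\|\mathrm{L}\|_{H^{q+r}\times H^{q+r}\to H^{q+r+1}\times H^{q+r+1}}\le C_{q,r}N^{-r-1}$ via \eqref{eq:QN} and $\mathrm{L}\in\mathrm{OPS}(-1)$, and absorbs the $\mathcal{Q}_N$ prefactor in the second by \eqref{eq:QN:01} when $q>1/2$ and by \eqref{eq:QN:02} when $0\le q\le1/2$; it then remains to bound $\|\mathrm{L}-\mathrm{L}_N\|_{H^{q+r}\times H^{q+r}\to H^q\times H^q}$ for each of the four pieces.

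For the two genuinely ``principal-part'' pieces $\widecheck{\mathcal{A}}_{\rm pp,\mathcal{R}}^{(2)}$ and $\widecheck{\mathcal{A}}_{\rm pp,\mathcal{R}}^{(\infty)}$ this is routine: their discretizations differ from the exact operators only through the substitutions $\eta\mapsto Q_N\eta$, $\eta^2\mapsto Q_N\eta^2$, $\eta'\mapsto Q_N\eta'$, $\J\mapsto\J Q_N$ and $\mathrm{C}_\eta^{(\infty)}\mapsto\mathrm{C}_{\eta,N}^{(\infty)}$. Telescoping one substitution at a time, each resulting difference isolates either $(\mathrm{I}-Q_N)$ acting on a fixed smooth function ($\eta$, $\eta^2$ or $\eta'$), superalgebraically small by \eqref{eq:QN}, or the factor $\mathrm{C}_\eta^{(\infty)}-\mathrm{C}_{\eta,N}^{(\infty)}$, controlled by \eqref{eq:Cetans} with $\alpha=\infty$ (decay of arbitrary order, since $\mathrm{C}_\eta^{(\infty)}\in\mathrm{OPS}(-\infty)$); the surrounding factors $\eta^{\pm1},\HH,\D_{-1},\D_{-2},\J$ and the constant matrices are fixed continuous operators, and the powers of $N$ produced by the two outer derivatives in mixed terms of the type $\eta^{-1}[\cdots]\,\D\,\mathrm{C}_\eta^{(\infty)}\,\eta^{-1}\HH\D$ are immaterial for the same reason. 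Both pieces therefore contribute terms no larger than $C_{q,r}N^{-r-\min\{1,q-2\}}$, in fact of strictly smaller order.

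The substance of the proof lies in $\widecheck{\mathcal{A}}_{\rm pp}^{(2)}\widecheck{\mathcal{R}}$ and, above all, in $\widecheck{\mathcal{A}}_{\rm comb}^{(2)}\widecheck{\mathcal{R}}$ -- the general-parametrization analogue of Theorem~\ref{th:02} and the main obstacle. Writing such a piece as $\mathrm{L}=\mathrm{B}\widecheck{\mathcal{R}}$ with $\mathrm{B}\in\mathrm{OPS}(-2)$ and $\mathrm{L}_N=\mathrm{B}_N\widecheck{\mathcal{R}}_N$ (with $\widecheck{\mathcal{R}}$, $\widecheck{\mathcal{R}}_N\in\mathrm{OPS}(1)$ uniformly, since their leading derivatives carry no interpolation), one uses
\[
\mathrm{B}\widecheck{\mathcal{R}}-\mathrm{B}_N\widecheck{\mathcal{R}}_N=\mathrm{B}(\widecheck{\mathcal{R}}-\widecheck{\mathcal{R}}_N)+(\mathrm{B}-\mathrm{B}_N)\widecheck{\mathcal{R}}_N;
\]
the first term is handled by \eqref{eq:07:prop:5.2} and the boundedness of $\mathrm{B}\in\mathrm{OPS}(-2)$, and the second is expanded entrywise via \eqref{eq:Acomb:02} and \eqref{eq:ARN} and telescoped -- exactly as in the proof of Theorem~\ref{th:02}, cf.\ \eqref{eq:02a:proof:main:Th} -- down to the constituent estimates of Proposition~\ref{prop:5.2} (with \eqref{eq:QN} for the $\D Q_N\approx\D$ approximations and \eqref{eq:06:prop:5.2} for the $\widecheck{\mathcal{Y}}_N$ factors). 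A short exponent count then shows that the rate $N^{-r-\min\{1,q-2\}}$ is saturated exactly by compositions of a negative-order boundary integral operator flanked by positive-order derivative operators -- $\widecheck{\K}_k^\top\,\eta^{-1}\D\,\widecheck{\mathcal{R}}$ and $\eta^{-1}\D\,\widecheck{\V}_{k}^{(4)}\,\eta^{-1}\D\,\widecheck{\mathcal{R}}$ for $\widecheck{\mathcal{A}}_{\rm comb}^{(2)}\widecheck{\mathcal{R}}$, and $\mathrm{C}_\eta^{(3)}\,\eta\,\HH\D\,\widecheck{\mathcal{R}}$ with $\mathrm{C}_\eta^{(3)}\in\mathrm{OPS}(-3)$ for $\widecheck{\mathcal{A}}_{\rm pp}^{(2)}\widecheck{\mathcal{R}}$ -- in which the two positive-order factors shift Sobolev exponents by $2$ while the discretization rate of the $\mathrm{OPS}(-3)$ factor saturates at $\min\{q,3\}$, producing the exponent $-r-(\min\{q,3\}-2)=-r-\min\{q-2,1\}$. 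Summing the four contributions gives the claimed estimate.

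The laborious part is the entrywise bookkeeping for $\widecheck{\mathcal{A}}_{\rm comb}^{(2)}\widecheck{\mathcal{R}}$: one must verify that in every composition of a positive-order operator with a negative-order one appearing inside $\mathrm{B}_N\widecheck{\mathcal{R}}_N$ the interpolation operators $Q_N$ sit at exactly the positions prescribed in \eqref{eq:5.10a}--\eqref{eq:5.10c} and \eqref{eq:ARN}, so that no composition loses more than the single power of $N$ already accounted for. This is precisely the difficulty of discretizing compositions of pseudodifferential operators of opposite orders emphasized in the introduction, and it is what makes it indispensable that the principal symbols of $\widecheck{\mathcal{A}}_{\rm pp}$ and $\widecheck{\mathcal{R}}$ have first been extracted as Fourier multipliers before being discretized.
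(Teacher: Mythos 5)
Your overall architecture is the same as the paper's: the same reduction of $\mathcal{A}_{{\rm comb},\mathcal{R}}^{(1)}-\mathcal{Q}_N\mathcal{A}_{{\rm comb},\mathcal{R},N}^{(1)}$ to operator-norm differences without the outer $\mathcal{Q}_N$, the same four-piece decomposition coming from \eqref{eq:5.25} and \eqref{eq:ARN}, the splitting $\mathrm{B}\widecheck{\mathcal{R}}-\mathrm{B}_N\widecheck{\mathcal{R}}_N=\mathrm{B}(\widecheck{\mathcal{R}}-\widecheck{\mathcal{R}}_N)+(\mathrm{B}-\mathrm{B}_N)\widecheck{\mathcal{R}}_N$, and the appeal to Proposition~\ref{prop:5.2} and \eqref{eq:Cetans}. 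However, there is a genuine gap in your treatment of the two pieces $\widecheck{\mathcal{A}}_{\rm pp,\mathcal{R}}^{(2)}$ and $\widecheck{\mathcal{A}}_{\rm pp,\mathcal{R}}^{(\infty)}$. You read expressions such as $\HH\D_{-1}Q_N\eta$, $\D_{-2}Q_N\eta'$, $\J Q_N\eta$ in \eqref{eq:RcheckN} and \eqref{eq:PartsOfARN} as the exact Fourier-multiplier operators composed with multiplication by the \emph{interpolant of the fixed smooth coefficient} ($Q_N\eta$, etc.), so that every discrepancy reduces to $(\mathrm{I}-Q_N)$ acting on a fixed smooth function and is superalgebraically small. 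That is not what these symbols mean: they are operator compositions, i.e.\ $\varphi\mapsto\HH\D_{-1}Q_N(\eta\varphi)$, so the interpolation error is incurred on the product $\eta\varphi$, whose smoothness is only that of the density $\varphi\in H^{q+r}$. This is exactly why the rates in Proposition~\ref{prop:5.2} (for instance \eqref{eq:07:prop:5.2}, which you yourself invoke for $\widecheck{\mathcal{R}}-\widecheck{\mathcal{R}}_N$) saturate at algebraic orders $N^{-r-\min\{q,1\}}$ rather than being spectrally small; under your reading those estimates would be superalgebraic and the whole stability question for compositions of opposite orders would be vacuous.

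Consequently, your blanket statement that the powers of $N$ produced by the flanking derivative operators in terms like $\D Q_N\mathrm{C}_{\eta,N}^{(\infty)}\eta^{-1}\HH\D$ or $(\HH\D_{-1}Q_N\eta)^2$ are ``immaterial'' is unjustified: these compositions must be estimated by distributing the loss over shifted Sobolev indices, e.g.
\[
  \|(\HH\D_{-1}\eta)^2-(\HH\D_{-1}Q_N\eta)^2\|_{H^{q+r}\to H^q}
  \le C\bigl(\|\HH\D_{-1}(\mathrm{I}-Q_N)\|_{H^{q+r+1}\to H^q}
  +\|\HH\D_{-1}(\mathrm{I}-Q_N)\|_{H^{q+r}\to H^{q-1}}\bigr),
\]
which yields only the algebraic rates $N^{-r-\min\{q,2\}}$ for $\widecheck{\mathcal{A}}_{\rm pp,\mathcal{R}}^{(2)}$ and $N^{-r-q+1}$ for $\widecheck{\mathcal{A}}_{\rm pp,\mathcal{R}}^{(\infty)}$ (using \eqref{eq:Cetans} with $\alpha=\infty$, whose decay is still capped by $q$, not arbitrary). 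These rates do fit under the claimed bound $N^{-r-\min\{1,q-2\}}$, so the theorem is not endangered and your exponent count for the remaining pieces $\widecheck{\mathcal{A}}_{\rm pp}^{(2)}\widecheck{\mathcal{R}}$ and $\widecheck{\mathcal{A}}_{\rm comb}^{(2)}\widecheck{\mathcal{R}}$ is consistent with the paper's; but as written the argument for the two principal-part pieces rests on a false premise and must be replaced by the norm-shifting estimates above.
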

\begin{proof}
By \eqref{eq:QN:02}
 \[
  \|\bm{\psi}-\mathcal{Q}_N \bm{\varphi}\|_{q}\le \|\mathcal{Q}_N (\bm{\psi}-\bm{\varphi})\|_{q}+ \|\bm\psi-\mathcal{Q}_N \bm\varphi\|_{q}
  \le C'\left(\|\bm{\psi}-\bm{\varphi}\|_{q}+ N^{-1}\|\bm{\psi}-\bm{\varphi}\|_{q+1}+N^{-r}\| \bm{\varphi}\|_{q+r} \right).
 \]
Setting $\bm{\psi} = \mathcal{A}_{{\rm comb},\mathcal{R} }^{(1)}(\varphi_p,\varphi_s)^\top$ and
 $\bm{\varphi}=  \mathcal{A}_{{\rm comb},\mathcal{R},N }^{(1)} (\varphi_p,\varphi_s)^\top$, and taking into account that  $\mathcal{A}_{{\rm comb},\mathcal{R} }^{(1)}\in\mathrm{OPS}(-1)$, we can reduce the result to bound
 \[
   \left\|\mathcal{A}_{{\rm comb},\mathcal{R} }^{(1)} - \mathcal{A}_{{\rm comb},\mathcal{R},N}^{(1)}
 \right\|_{H^{q+r}\times H^{q+r} \to H^q\times H^q }
 +
    N^{-1}\left\|\mathcal{A}_{{\rm comb},\mathcal{R} }^{(1)} - \mathcal{A}_{{\rm comb},\mathcal{R},N}^{(1)}
 \right\|_{H^{q+r}\times H^{q+r} \to H^{q+1}\times H^{q+1} }.
 \]
 The result follows from the following  estimates:
 \begin{enumerate}[label = (\alph*)]
 \item  Estimate
 \[
  \|\widecheck{\mathcal{A}}_{{\rm comb}}^{(2)}\widecheck{\mathcal{R}}-\widecheck{\mathcal{A}}_{{\rm comb},N}^{(2)}\widecheck{\mathcal{R}}_{k,N}\|_{H^{q+r}\times H^{q+r}\to H^{q}\times H^q} \le  C N^{-r-\min\{q-1,1\}}
 \]
which can be derived from Proposition \ref{prop:5.2}, specifically  estimate \eqref{eq:07:prop:5.2},
 \[
   \|\widecheck{\mathcal{Y}}-\widecheck{\mathcal{Y}}_{N}\|_{H^{q+r}\times H^{q+r} \to H^{q}\times H^q},
 \]
 with estimates \eqref{eq:02:prop:5.2}-\eqref{eq:05:prop:5.2}, that imply
\[
  \|\widecheck{\mathcal{A}}_{{\rm comb}}^{(2)}-\widecheck{\mathcal{A}}_{{\rm comb},N}^{(2)}\|_{H^{q+r-1}\times H^{q+r-1}\to H^{q}\times H^q}\le C N^{-r-\min\{q-1,1\}}
\]
 combined with estimate \eqref{eq:07:prop:5.2}
\begin{eqnarray}\label{eq:RN-R}
 \|\widecheck{\mathcal{R}}-\widecheck{\mathcal{R}}_{N}\|_{H^{q+r}\times H^{q+r} \to H^{q}\times H^q}&\le& C N^{r-\min\{q,1\}}.
 \end{eqnarray}
 (Recall that $ \widecheck{\mathcal{R}} \in \mathrm{OPS}(-1)$.)

 \item Estimate
\begin{eqnarray*}
 \|\widecheck{\mathcal{A}}_{{\rm pp}}^{(2)}\widecheck{\mathcal{R}}-\widecheck{\mathcal{A}}_{{\rm pp},N}^{(2)}\widecheck{\mathcal{R}}_N\|_{H^{q+r}\times H^{q+r}\to H^{q}\times H^q}&\le& C N^{-r-\min\{q-1,1\}}
\end{eqnarray*}
which can be derived from
\begin{eqnarray*}
 \|\widecheck{\mathcal{A}}_{{\rm pp}}^{(2)}-\widecheck{\mathcal{A}}_{{\rm pp},N}^{(2)}\|_{H^{q+r-1}\times H^{q+r-1}\to H^{q}\times H^q} \le  C N^{-r-\min\{q-1,1\}}
\end{eqnarray*}
 using \eqref{eq:Cetans}
and \eqref{eq:RN-R}.
\item Estimate
\begin{eqnarray*}
 \|\widecheck{\mathcal{A}}_{{\rm pp},\mathcal{R}}^{(2)}-\widecheck{\mathcal{A}}_{{\rm pp},\mathcal{R}}^{(2)}\|_{H^{q+r }\times H^{q+r }\to H^{q}\times H^q}&\le& C N^{-r-\min\{q-2,2\}}.
\end{eqnarray*}
which follow from the easy-to-prove estimate
\[
\begin{aligned}
  \|(\HH\D_{-1})^2- & (\HH\D_{-1}Q_N \eta)^2\|_{H^{q+r }\to H^{q }} \\
  & \le\ C \left(\|\HH\D_{-1}-  \HH\D_{-1}Q_N \|_{H^{q+r+1}\to H^{q }}+
  \|\HH\D_{-1}-  \HH\D_{-1}Q_N \|_{H^{q+r}\to H^{q-1 }}\right)\\
  &\le  C' N^{-r-\min\{2,q\}}
  \end{aligned}
\]
\item Estimate
\begin{eqnarray*}
 \|\widecheck{\mathcal{A}}_{{\rm pp},\mathcal{R}}^{(\infty)}-\widecheck{\mathcal{A}}_{{\rm pp},\mathcal{R}}^{(\infty)}\|_{H^{q+r }\times H^{q+r }\to H^{q}\times H^q}&\le& C N^{-r- q+1}.
\end{eqnarray*}
 \end{enumerate}

\end{proof}

As consequence we can state  stability and convergence of our Nystr\"om method:
\begin{theorem}\label{theo:5.4}
For $N$ large enough  the equations of the numerical method \eqref{eq:theMethod:check}  admits a unique solution $(\widecheck{\lambda}_{p,N},\widecheck{\lambda}_{s,N})$ which satisfies, for any $q>2$,
\[
\|\widecheck{\lambda}_{p,N}\|_{q}+\|\widecheck{\lambda}_{s,N}\|_{q}\le C_q \big(
\|\widecheck{\lambda}_{p}\|_{q}+\|\widecheck{\lambda}_{s}\|_{q}\big)\le C'_{q,r}  N^{-r}\big(
 \|  \widecheck{f}_{\nnn}\|_{q+r+1}+\|\widecheck{f}_{\ttt}\|_{q+r+1}\big)
\]
for any $\widecheck{\lambda}_p,$ $\widecheck{\lambda}_s$, solution of \eqref{eq:ARcheck}with $C_q>0$ independent of $N$ and of the true solution.
Furthermore, we have the following error estimate
\[
\| \widecheck{\lambda}_{p}-\widecheck{\lambda}_{p,N}\|_{q}+\|\widecheck{\lambda}_{s}-\widecheck{\lambda}_{s,N}\|_{q}\le C_{q,r} N^{-r}\big(
 \|  \widecheck{\lambda}_{p}\|_{q+r}+\|\widecheck{\lambda}_{s}\|_{q+r}\big),
\]
and, if $(\widecheck{\varphi}_p,\widecheck{\varphi}_s)$ are the densities given  by \eqref{eq:ARcheck2}, and $(\widecheck{\varphi}_{p,N},\widecheck{\varphi}_{s,N})$ those given by \eqref{eq:ARN:02}, it holds
\[
\|   \widecheck{\varphi}_{p}-\widecheck{\varphi}_{p,N}\|_{q}+\|\widecheck{\varphi}_{s}-\widecheck{\varphi}_{s,N}\|_{q}\le  C_{q,r}  N^{-r}\big(
 \|  \widecheck{\lambda}_{p}\|_{q+r+1}+\|\widecheck{\lambda}_{s}\|_{q+r+1}\big).
 \]
\end{theorem}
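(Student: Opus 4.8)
The plan is to reproduce the architecture of the proof of Theorem~\ref{theo:4.5}, with the splitting \eqref{eq:5.25} playing the role that Theorem~\ref{theo:01} played there. First I would record the abstract setting: by \eqref{eq:5.25} the continuous regularized operator is $\widecheck{\mathcal{A}}_{\rm comb}\widecheck{\mathcal{R}}=\mathcal{H}_{p,s}+\mathcal{A}_{{\rm comb},\mathcal{R}}^{(1)}$ with $\mathcal{A}_{{\rm comb},\mathcal{R}}^{(1)}\in\mathrm{OPS}(-1)$ and $\mathcal{H}_{p,s}$ a Fourier multiplier that is boundedly invertible on $H^{q}\times H^{q}$ for every $q$; hence $\widecheck{\mathcal{A}}_{\rm comb}\widecheck{\mathcal{R}}$ is a compact perturbation of an isomorphism, and, being Fredholm of index zero, it is an isomorphism of $H^{q}\times H^{q}$ as soon as it is injective — injectivity following, as in Theorem~\ref{theo:01} and \cite{DomTurc:2023}, from well-posedness of the exterior Dirichlet problem for the Navier equation together with the injectivity of $\widecheck{\mathcal{R}}$ proved above. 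As a consequence the exact solution $(\widecheck{\lambda}_{p},\widecheck{\lambda}_{s})$ of \eqref{eq:ARcheck} satisfies two-sided bounds relating $\|\widecheck{\lambda}_{p}\|_{q}+\|\widecheck{\lambda}_{s}\|_{q}$ and $\|\widecheck{f}_{\nnn}\|_{q}+\|\widecheck{f}_{\ttt}\|_{q}$ at every order $q$, which I will use freely to pass between the $\widecheck{\lambda}$-side and the data-side quantities appearing in the statement.

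The second step is existence, uniqueness and uniform invertibility of the discrete operator. Setting $r=0$ in the theorem just established gives, for $q>2$, $\|\mathcal{A}_{{\rm comb},\mathcal{R}}^{(1)}-\mathcal{Q}_N\mathcal{A}_{{\rm comb},\mathcal{R},N}^{(1)}\|_{H^{q}\times H^{q}\to H^{q}\times H^{q}}\le C N^{-\min\{1,q-2\}}$, which vanishes as $N\to\infty$. Thus $\mathcal{A}_{{\rm comb},\mathcal{R},N}=\mathcal{H}_{p,s}+\mathcal{Q}_N\mathcal{A}_{{\rm comb},\mathcal{R},N}^{(1)}$ is an $o(1)$ perturbation of the isomorphism $\widecheck{\mathcal{A}}_{\rm comb}\widecheck{\mathcal{R}}$, and a Neumann series argument yields, for $N$ large, invertibility with $\|\mathcal{A}_{{\rm comb},\mathcal{R},N}^{-1}\|_{H^{q}\times H^{q}\to H^{q}\times H^{q}}\le C_{q}$ uniformly in $N$. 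Since $\mathcal{H}_{p,s}$ preserves $\mathbb{T}_N\times\mathbb{T}_N$ and, by construction, $\mathcal{Q}_N\mathcal{A}_{{\rm comb},\mathcal{R},N}^{(1)}$ and $\mathcal{Q}_N(\widecheck{f}_{\nnn},\widecheck{f}_{\ttt})^{\top}$ take values in $\mathbb{T}_N\times\mathbb{T}_N$, the unique solution $(\widecheck{\lambda}_{p,N},\widecheck{\lambda}_{s,N})$ of \eqref{eq:theMethod:check} lies in $\mathbb{T}_N\times\mathbb{T}_N$ and is pinned down by its nodal values; stability is then immediate by composing the uniform inverse bound with $\|\mathcal{Q}_N\|_{H^{q}\to H^{q}}\le C_{q}$ ($q>1/2$) and the two-sided bounds of the first step.

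For the error in $\widecheck{\lambda}$ I would, exactly as in Theorem~\ref{theo:4.5}, subtract the exact equation $(\mathcal{H}_{p,s}+\mathcal{A}_{{\rm comb},\mathcal{R}}^{(1)})(\widecheck{\lambda}_{p},\widecheck{\lambda}_{s})^{\top}$ from the scheme \eqref{eq:theMethod:check}, obtaining $\mathcal{A}_{{\rm comb},\mathcal{R},N}(\widecheck{\lambda}-\widecheck{\lambda}_N)=(\mathrm{I}-\mathcal{Q}_N)(\widecheck{f}_{\nnn},\widecheck{f}_{\ttt})^{\top}+(\mathcal{A}_{{\rm comb},\mathcal{R}}^{(1)}-\mathcal{Q}_N\mathcal{A}_{{\rm comb},\mathcal{R},N}^{(1)})(\widecheck{\lambda}_{p},\widecheck{\lambda}_{s})^{\top}$; taking $\|\cdot\|_{q}$, using the uniform inverse bound, the interpolation estimate \eqref{eq:QN}, the preceding theorem, and $\|\widecheck{f}_{\nnn}\|_{q+r}+\|\widecheck{f}_{\ttt}\|_{q+r}\le C\,(\|\widecheck{\lambda}_{p}\|_{q+r}+\|\widecheck{\lambda}_{s}\|_{q+r})$, and noting $\min\{1,q-2\}\ge 0$ for $q>2$, produces the claimed rate $N^{-r}$. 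For the densities I would split (componentwise) $\widecheck{\varphi}-\widecheck{\varphi}_N=(\widecheck{\mathcal{R}}-\widecheck{\mathcal{R}}_N)\widecheck{\lambda}+\widecheck{\mathcal{R}}_N(\widecheck{\lambda}-\widecheck{\lambda}_N)$: the first term is controlled by \eqref{eq:07:prop:5.2} applied at regularity level $q+r+1$, which gives the bound $N^{-(r+1)-\min\{q,1\}}\|\widecheck{\lambda}\|_{q+r+1}\le N^{-r}\|\widecheck{\lambda}\|_{q+r+1}$, and the second by the uniform boundedness of $\widecheck{\mathcal{R}}_N:H^{q+1}\times H^{q+1}\to H^{q}\times H^{q}$ together with the $\widecheck{\lambda}$-error estimate at level $q+1$ (legitimate since $q>1$).

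The genuinely load-bearing step is the consistency estimate already proved in the theorem just above this one; it carries all of the parametrization-dependent pseudodifferential bookkeeping. Granting it, what remains is the routine projection-method stability argument for a compact perturbation of the fixed isomorphism $\mathcal{H}_{p,s}$. The one feature absent from the arc-length case of Theorem~\ref{theo:4.5} is that the regularizer is now itself discretized, so one cannot simply write $\widecheck{\varphi}-\widecheck{\varphi}_N=\widecheck{\mathcal{R}}(\widecheck{\lambda}-\widecheck{\lambda}_N)$; the extra term $(\widecheck{\mathcal{R}}-\widecheck{\mathcal{R}}_N)\widecheck{\lambda}$ is the single place where Proposition~\ref{prop:5.2}, specifically estimate \eqref{eq:07:prop:5.2}, enters in the density bound, and I expect that small piece of accounting (together with tracking which Sobolev order the various operator-norm estimates must be applied at) to be the most delicate part of writing the proof out in full.
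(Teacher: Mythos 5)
Your proposal is correct and follows essentially the same route as the paper: the paper's proof of Theorem~\ref{theo:5.4} simply replays the argument of Theorem~\ref{theo:4.5} with the consistency estimate of the preceding theorem in place of Theorem~\ref{th:02}, and handles the density error using \eqref{eq:RN-R} (i.e.\ \eqref{eq:07:prop:5.2}) precisely because $\widecheck{\mathcal{R}}_N$ is no longer a Fourier multiplier — exactly the splitting $(\widecheck{\mathcal{R}}-\widecheck{\mathcal{R}}_N)\widecheck{\lambda}+\widecheck{\mathcal{R}}_N(\widecheck{\lambda}-\widecheck{\lambda}_N)$ you describe. You merely spell out details the paper leaves implicit (Fredholm-plus-injectivity for the continuous operator, the Neumann-series stability step, and the Sobolev-index bookkeeping), all of which are consistent with the paper's intended argument.
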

\begin{proof}
 Follows along the same lines as Theorem \ref{theo:4.5}. Notice that for the last result, and since $\widecheck{\mathcal{R}}_{N}$ is not longer a Fourier multiplier operator, we can use error estimate \eqref{eq:RN-R}.
\end{proof}


\section{Numerical experiments}\label{num}

We will present some numerical experiments for illustrating the theoretical results. First, we describe the considered domains for the different problems
\begin{enumerate}
\item The ellipsoid $\Gamma_{\rm e}$ centered at $(0,0)$ and semiaxes $(r,2r)$ with $r\approx 0.6485$ with the parametrization
\[
 \widecheck{{\bf x}}_{\rm e}(t):=r(\cos t,2 \sin t).
\]

\item The kite shaped curve $\Gamma_{\rm k}$ parameterized with
\[
 \widecheck{{\bf x}}_{\rm k}(t):= r\left(\cos t+\cos 2t,2\sin t\right),\quad r\approx 0.6348
\]

\item The cavity domain $\Gamma_{\rm c}$ given by the parametrization
\[
\widecheck{{\bf x}}_{\rm c}(t)= r\left(   \tfrac45\cos2t+\tfrac25\cos t),
                             \tfrac{7}{12}\sin t + \tfrac{17}{48}\sin2t+\tfrac{3}8\sin 3t - \tfrac{1}{24}\sin 4t  \right),\quad r\approx 0.6799
\]
\end{enumerate}

   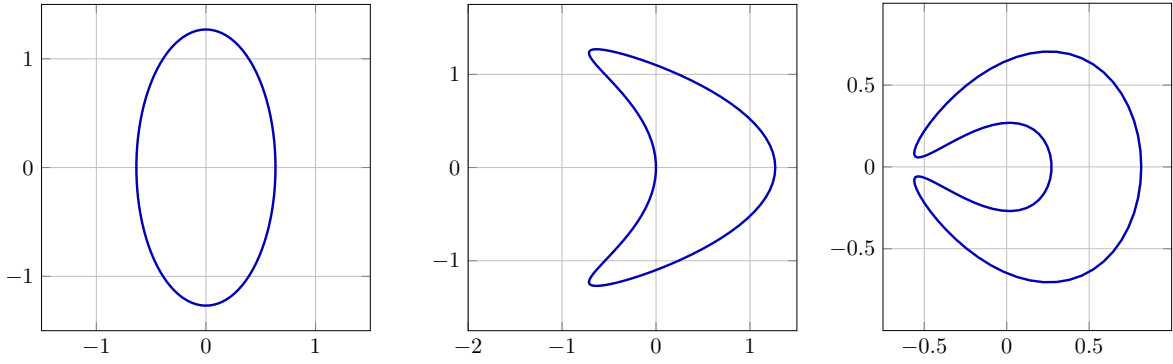
\begin{figure}
\begin{center}
\resizebox{0.30\textwidth}{!}{
\begin{tikzpicture}
  [declare function = {f1(\x)=1; }]
  \begin{axis}[
    xmin=-1.5 ,xmax=1.5 ,
    ymin=-1.5,ymax=1.5,
    grid=both,  axis equal image,
    xtick=    {-2, -1,0, 1,2 },
    ytick=    {-2, -1,0, 1,2 },
  ]
    \addplot[domain=0:360,samples=100,variable=\t,very thick, blue!70!black](%
      {1*0.6348*cos(t)}, %
      {2*0.6348*sin(t)});%
  \end{axis}
\end{tikzpicture}

}
\quad
 \resizebox{0.30\textwidth}{!}{
\begin{tikzpicture}
\begin{axis}[
  xmin=-2 ,xmax=1.5 ,
  ymin=-1.75,ymax=1.75,
  grid=both,  axis equal image,
xtick=    {-2, -1,0, 1,2 },
ytick=    {-2, -1,0, 1,2 },
]
  \addplot[domain=0:360,samples=100,variable=\t,very thick, blue!70!black](%
    {0.6348*(cos( t)+cos(2* t) )},
    {0.6348* 2* sin(t)}%
  );
\end{axis}
\end{tikzpicture}
}
 \resizebox{0.2825\textwidth}{!}{
\begin{tikzpicture}
\begin{axis}[
  xmin=-.75  ,xmax=1 ,
  ymin=-1  ,ymax=1,
  grid=both,  axis equal image,
xtick=    {  -.5,0,0.5 },
ytick=    {  -.5, 0,.5  },
]
  \addplot[domain=0:360,samples=100,variable=\t,very thick, blue!70!black](%
    {.6799 *((4*cos(2*t))/5+(2*cos(t))/5)}, %
    { .6799*((17*sin(2*t))/48+(3*sin(3*t))/8 %
                         -sin(4*t)/24+(7*sin(t))/12))}%
  );
\end{axis}

\end{tikzpicture}
}

\end{center}
\caption{\label{fig:geom} Geometries for the experiments considered in this section: The ellipse, the kite and the cavity domain. Notice that all the curves are of length $2\pi$.}
\end{figure}

In all these cases the curves are of length $2\pi$ (this constrained set the values of parameters $r$ above). The right-hand-side is taken so that
\[
 {\bf u}_{ s}({\bm x}) =  {\Phi}({\bm x}-{\bm x}_0)\begin{bmatrix}
                                                         1\\
                                                         1
                                                        \end{bmatrix}
\]
where $\Phi$ is the $2\times 2$ squared function matrix given
\[
\Phi({\bm x})=\frac{1}{\mu} \phi_0\left(k_s r\right) I_2+\frac{1}{\omega^2} \nabla_{\boldsymbol{x}} \nabla_{\boldsymbol{x}}^{\top}\left(\phi_0\left(k_s r\right)-\phi_0\left(k_p r\right)\right),
\]
the  fundamental solution for Navier equation. In all these cases ${\bm x}_0$ is a point taken in the interior of $\Gamma$. The Lam\'e parameters and the wave-numbers are taken to be
\[
 \lambda=2,\ \mu = 1.
\]
In the first series of experiments we have used arc-length parametrization for $N =
32, \ 64,$  $128,$ $\ldots,\ 1024$. We have measured the error far away from curve, specifically at 1024 points uniformly distributed at the circle of radius $4$ and centered at origin.
{The evaluation of the potentials is carried out by the rectangular rule which is optimal in the current frame: smooth periodic functions.
Let us emphasize that these points are sufficiently far away to benefit from the superconvergence of the algorithms involved, the numerical method and the quadrature rule. However, for points closer to the boundary we can expect, and it has been observed numerically, that this fast convergence is  achieved only for (very) large values of $n$. This phenomenon can be easily explained by the fact that the kernels (and their derivatives) of the Helmholtz single and double layer operators, although smooth, are nearly singular.   Naturally, this affects the convergence of the rectangular quadrature rule employed in evaluating the potentials. Certainly, this is not a new phenomenon in Boundary element methods, but something  quite common for which new careful strategies have to be considered such as increasing the number of quadrature points (evaluation of the density outside of the grid), special quadrature rules or techniques based on continuous expansion of the potentials to the boundary or interior of the domain. Although not used in our experiments, we are confident that these strategies could  also be applied in our cases.}

In Table \ref{tab:01}, we present the results for $\omega=10$ (which results in $k_p\approx 3.53$ and $k_s\approx 5.77$) and $\omega = 100$ (and so $k_p\approx 35.3$ and $k_s\approx 57.5$) for arc-length parametrizations. Fast, superalgebraically  convergence is observed for the ellipse, which is what theory predicts in view of Theorem \ref{theo:4.5}.
The convergence behavior for the kite and cavity curve varies significantly. We hypothesize that this poor convergence arises because the arc-length parametrization only introduces a large number of points at the complex parts of the domain for very large values of $N$, but the second and third derivatives of the arc-length, although formally smooth, are {\em very steep peak functions} at various points. This causes numerical instabilities that deteriorate the convergence of the method.

Same problems are solved in Table \ref{tab:02} but with the natural parametrizations{, i.e. the parameterizations used in the definition of the curves}. In these instances, the fast convergence stated in Theorem \ref{theo:5.4} is evident across all three cases examined. In this case the parametrizations $\widehat{\bf x}_{\rm k}
$, $\widehat{\bf x}_{\rm c}$ behave far better which makes the method converge at his full potential.

{We recall that the primary role of the regularizer is to render well-posed boundary integral formulations since
the principal symbol of these operators (in the pseudodifferential sense) is not an elliptic operator given that its kernel and coimage are not finite dimensional.
We emphasize that the primary function of the regularizer is to ensure well-posed boundary integral formulations. This is necessary because the principal symbol of these operators, in the pseudodifferential sense, is not  an elliptic operato since both the kernel and coimage of these operators are not finite-dimensional.

The preconditioner ${\cal R}$ remedies this defect by rendering the composition ${\cal A}_{\rm comb,\Gamma}{\cal R}$ operator elliptic. Furthermore, we showed how the regularizing operator ${\cal R}$ can lead to robust integral equations of the second-kind for the Helmholtz decomposition approach to the Navier equation, which are ideal for the analysis of Nystrom discretization (see Theorem \ref{theo:01} for arc-length parameterizations and \eqref{eq:Acomb:02} for arbitrary ones).

Our numerical experiments suggest that the use of regularized formulations is effective for low and mid-range frequencies $\omega$, that is the condition numbers of the regularized discrete formulations are lower than those of the combined field formulations, and the condition numbers of the regularized formulations do not depend on the mesh size.  We illustrate this behavior in Table \ref{tab:last:table} where the condition numbers of the BIE formulations are displayed for Experiment \#3 with the combined field formulation (no regularizer or ${\cal R}=I$) and the preconditioned/regularized formulations that use the operator ${\cal R}$ considered in this work. We list the condition numbers of the ensuing Nystrom for the two formulations, and for sufficiently large discretization size $N$, so that numerical error observed in the near field are below $10^{-8}$. We stress again that the continuous operator in the first case ${\cal R}=I$ is not Fredholm since it is a compact perturbation of a defective operator, i.e., an operator with an infinite dimensional kernel and coimage. The condition number of the Nystrom matrices corresponding to these operators grow with the discretization size $N$. Nevertheless, the combined formulation works sufficiently well in practice---that is their Nystrom matrices are invertible for fine enough discretization levels, but we have no mathematical explanation for this fact.

\begin{table}[h]
\centering
\begin{tabular}{|c|c|c|c|c|c|c|c|c|c|}
 \cline{3-8}
\multicolumn{2}{c|}{} & \multicolumn{6}{c|}{$n$} \\ \cline{3-8}
 \multicolumn{1}{c}{} &                                         &       32 &       64 &      128 &      256 &      512 &     1024     \\ \hline
\multirow{2}{*}{$\omega = 10$}&${\cal R}= {\cal I} $            & 2.79E+04 & 1.61E+05 & 9.51E+04 & 2.84E+05 & 1.37E+06& 6.21E+06  \\
                              &${\cal R}$                       & 1.28E+03 & 1.77E+03 & 2.96E+03 & 2.94E+03 & 2.94E+03& 2.94E+03  \\ \hline
\multirow{2}{*}{$\omega =40$} &${\cal R}= {\cal I}$             &    --    &    --    & 1.27E+04 & 2.13E+04 & 8.87E+04& 3.91E+05 \\
                              &${\cal R}$                       &    --    &    --    & 8.12E+03 & 8.12E+03 & 8.12E+03& 8.12E+03  \\ \hline
\multirow{2}{*}{$\omega =160$} &${\cal R}= {\cal I}$            &    --    &    --    &    --    & 7.95E+03 & 6.14E+04& 2.85E+04 \\
                              &${\cal R}$                       &    --    &    --    &    --    & 3.63E+05 & 3.63E+05& 3.63E+05  \\ \hline
\end{tabular}
\caption{Condition numbers of the regularized and the combined field formulation for Experiment 3}
\label{tab:last:table}
\end{table}

On the other hand, the regularized formulation leads to Nystrom matrices whose condition numbers are independent of discretization size $N$ and which, except in the high frequency domain, are order magnitude lower than those corresponding to the combined field formulations. However,  the condition number of the regularized formulations for $\omega = 160$ is significantly worse than for $\omega = 40$, but again we have no explanation for this fact. It is possible to construct different types of regularizers based on square root approximations of DtN operators---see \cite{DomTurc:2023}, and the use of such regularizers mitigate the conditioning of the regularized formulations in the high-frequency regime. The numerical analysis of such regularizers is a bit more complicated and we will carry it out in a different venue.
%
}

\begin{table}
  \begin{center}
\begin{tabular}{|c|c|c|c|c|c|c|c|c|c|c|}
\hline
$N$ & \multicolumn{2}{c|} {$\Gamma_{\rm e}$} &   \multicolumn{2}{c|} {$\Gamma_{\rm k}$} & \multicolumn{2}{c|} {$\Gamma_{\rm c}$}\\
\hline
 & $\omega=10$ & $\omega=100$ &  $\omega=10$ & $\omega=100$ &  $\omega=10$ & $\omega=100$   \\
\hline
32   &      1.42{\tt E-}05  &  4.88{\tt E-}02  &  1.08{\tt E-}02  &   6.69{\tt E-}02  &        4.44{\tt E-}03&   9.52{\tt E-}02\\
64   &      1.61{\tt E-}09  &  3.10{\tt E-}02  &  5.97{\tt E-}03  &   9.18{\tt E-}03  &        1.44{\tt E-}03&   2.98{\tt E-}02\\
128  &      3.63{\tt E-}14  &  3.29{\tt E-}04  &  2.84{\tt E-}04  &   2.31{\tt E-}03  &        8.28{\tt E-}04&   2.87{\tt E-}03\\
256  &      2.64{\tt E-}14  &  1.73{\tt E-}12  &  9.78{\tt E-}04  &   9.19{\tt E-}04  &        1.61{\tt E-}04&   1.56{\tt E-}04\\
512  &      4.20{\tt E-}14  &  1.11{\tt E-}12  &  1.50{\tt E-}06  &   1.11{\tt E-}04  &        4.93{\tt E-}06&   5.85{\tt E-}06\\
1024 &      5.11{\tt E-}14  &  6.94{\tt E-}13  &  3.42{\tt E-}07  &   2.12{\tt E-}06  &        4.62{\tt E-}09&   3.28{\tt E-}09\\
\hline
\end{tabular}
\caption{\label{tab:01}Error of the method for the three considered curves, $\Gamma_{\rm e}$ (ellipse), $\Gamma_{\rm k}$ (kite curve) and $\Gamma_{\rm c}$ the cavity problem for $\omega=10$ and $\omega=100$. In all these three cases the arc-length parametrization has been considered. }
\end{center}
\end{table}


\begin{table}
  \begin{center}
\begin{tabular}{|c|c|c|c|c|c|c|c|c|c|c|}
\hline
$N$ & \multicolumn{2}{c|} {$\Gamma_{\rm e}$} &   \multicolumn{2}{c|} {$\Gamma_{\rm k}$} & \multicolumn{2}{c|} {$\Gamma_{\rm c}$}\\
\hline
 & $\omega=10$ & $\omega=100$ &  $\omega=10$ & $\omega=100$ &  $\omega=10$ & $\omega=100$   \\
\hline
32   &    3.75{\tt E-}06  &     5.50{\tt E-}02  &   1.14{\tt E-}02  &   6.88{\tt E-}02  &  1.25{\tt E-}02  &   1.49{\tt E-}01\\
64   &    2.49{\tt E-}11  &     4.19{\tt E-}01  &   1.36{\tt E-}03  &   1.17{\tt E-}02  &  4.43{\tt E-}03  &   3.61{\tt E-}02\\
128  &    5.77{\tt E-}16  &     5.92{\tt E-}03  &   8.38{\tt E-}03  &   5.39{\tt E-}02  &  2.10{\tt E-}03  &   1.97{\tt E-}02\\
256  &    1.24{\tt E-}15  &     9.05{\tt E-}08  &   2.02{\tt E-}04  &   6.84{\tt E-}05  &  6.42{\tt E-}07  &   1.85{\tt E-}06\\
512  &    1.34{\tt E-}15  &     6.92{\tt E-}13  &   9.03{\tt E-}11  &   1.79{\tt E-}12  &  3.18{\tt E-}15  &   6.79{\tt E-}13\\
1024 &    2.10{\tt E-}15  &     3.89{\tt E-}13  &   6.79{\tt E-}15  &   5.97{\tt E-}13  &  1.06{\tt E-}15  &   4.67{\tt E-}13\\
\hline
\end{tabular}
\caption{\label{tab:02}Error of the method for the three considered curves, $\Gamma_{\rm e}$ (ellipse), $\Gamma_{\rm k}$ (kite curve) and $\Gamma_{\rm c}$ the cavity problem for $\omega=10$ and $\omega=100$. In all these three cases  the parametrization chosen is the {\em natural} one.}
\end{center}
\end{table}


We also point out that the underlying operator defined by the method is a compact perturbation of the invertible operator $
  \mathcal{H}_{p,s}$ (see Theorem \ref{theo:01} and \eqref{eq:Hps}). It is not difficult cf.~\cite{DomTurc:2023} to check that the eigenvalues of this operator are $\{ -(k_p^2 + k_s^2)/2,- (\widetilde{k}_p^2+ \widetilde{k}_s^2)/2 \}$   which implies that the eigenvalues of $\mathcal{A}_{{\rm comb},\mathcal{R} }$ are clustered around these points in the complex plane  Since  $\mathcal{A}_{{\rm comb},\mathcal{R},N }$ converges in operator norm to the continuous one,   the eigenvalues of the matrices of the numerical method inherit this property. As a consequence, we expect that in solution of the corresponding linear system Krylov iterative methods such as GMRES converge in a  low number of iterations which{,} moreover, is essentially independent of the level of discretizations.

This behaviour has been observed  for the three experiments as it is shown for  $N=1024$ (which means that 2048 eigenvalues are displayed for each matrix) and $\omega=10$ in  Figure \ref{fig:01} and $\omega =100$ in Figure \ref{fig:02}.

Regarding GMREs convergence, we displayed in Table \ref{tab:03} and \ref{tab:04} the iterations required to attain convergence, tolerance has been set in all the cases equal to $10^{-9}$, for arc-length and non-arc-lengh parametrizations. We observe that there are very minor gains in iteration counts when using the arclength parametrization for $\omega =10$, and no gains at all for the high-frequency problem.
{We emphasize that the   linear systems considered in this numerical section can still be solved straightforwardly using Gaussian elimination. Our goal in exploring the convergence behavior of GMRES is rather to investigate whether these techniques can potentially be applied in two-dimensional for very large problems and to to gain insight into what we can expect for three-dimensional problems. In the latter case, the use of iterative methods becomes almost mandatory, even for moderate values of $\omega$.}

In other words, the regularizer operator originally proposed for arc-length parametrizations has been successfully extended to arbitrary parametrizations.

{We emphasize that the   linear systems considered in this numerical section can still be solved straightforwardly using Gaussian elimination. Our goal in exploring the convergence behavior of GMRES is rather to investigate whether these techniques can potentially be applied in two-dimensional for very large problems and to to gain insight into what we can expect for three-dimensional problems. In the latter case, the use of iterative methods becomes almost mandatory, even for moderate values of $\omega$.}
\begin{figure}
 \[
  \includegraphics[width=0.5\textwidth]{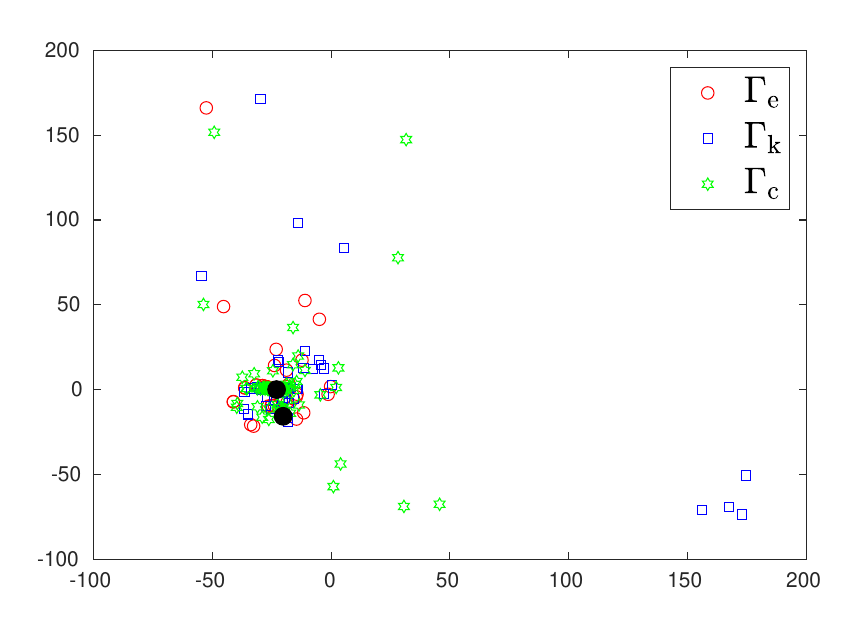}
 \quad
  \includegraphics[width=0.5\textwidth]{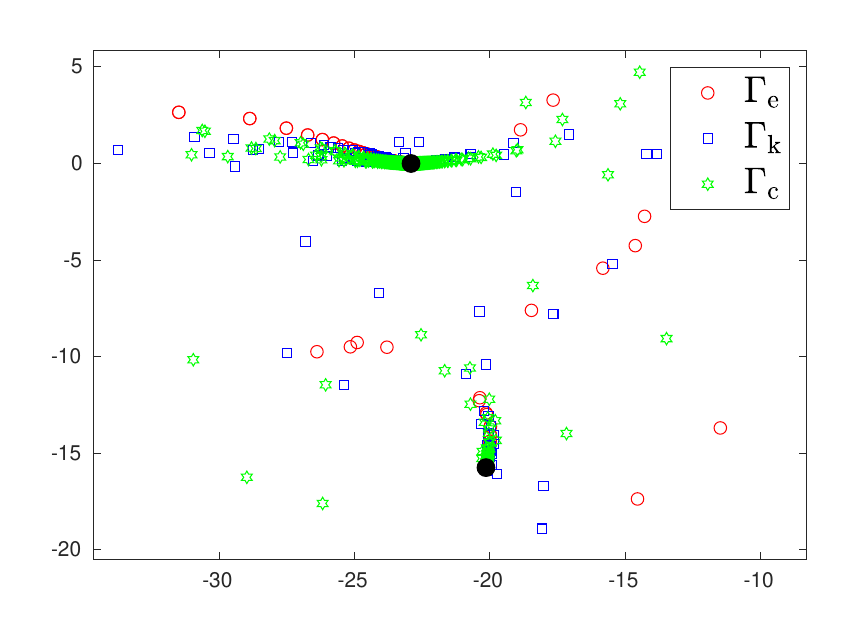}
 \]
\caption{\label{fig:01} Left panel: Eigenvalue distribution in the complex plane for the three geometries, with $N=1024$ and $\omega =10$, with {\em natural} parametrization. Right panel, a detail (zoom at) around the accumulation points, the eigenvalues of $\mathcal{H}_{p,s}$.}
\end{figure}
\begin{figure}
 \[
  \includegraphics[width=0.5\textwidth]{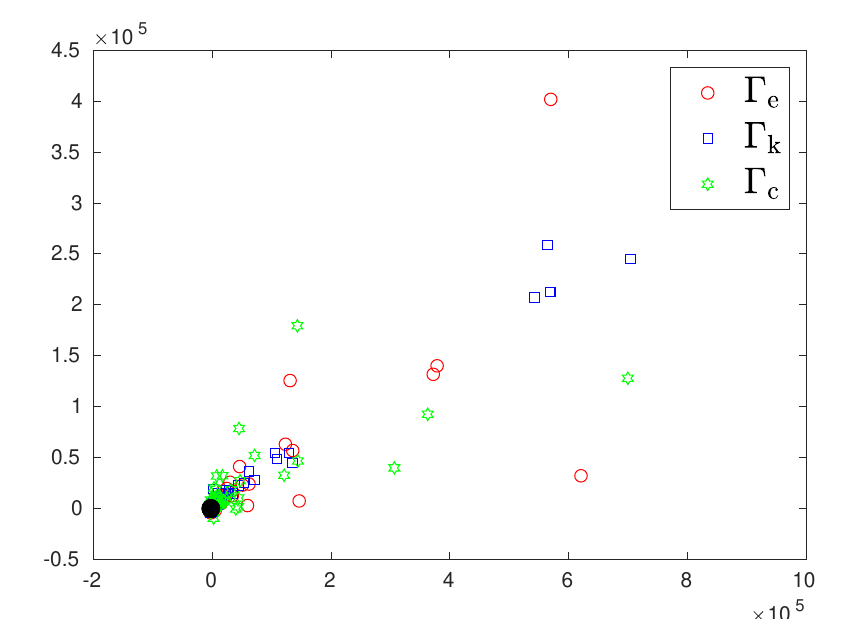}
 \quad
  \includegraphics[width=0.5\textwidth]{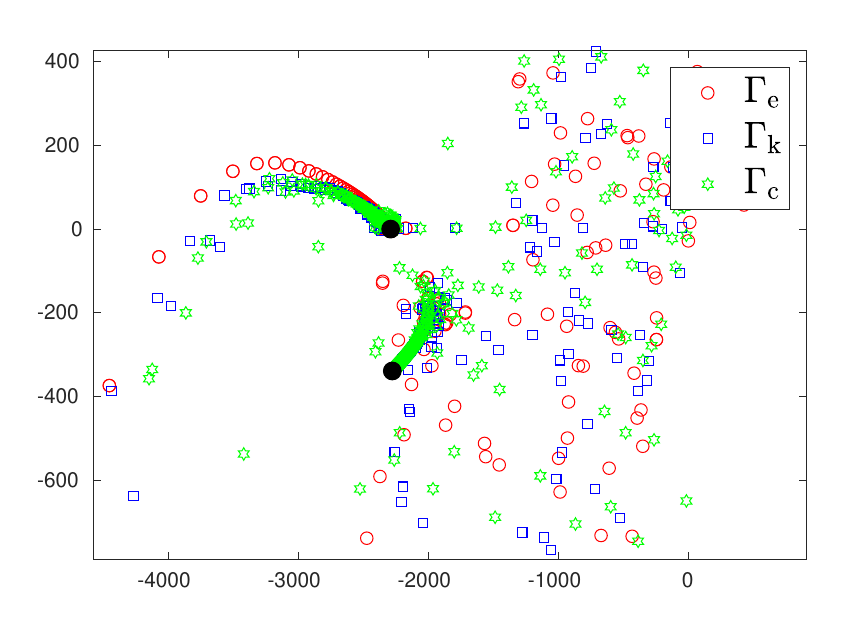}
 \]
\caption{\label{fig:02}Left panel: Eigenvalue distribution in the complex plane for the three geometries, with $N=1024$ and $\omega =100$. Right panel, a detail (zoom at) around the accumulation points, the eigenvalues of $\mathcal{H}_{p,s}$}
\end{figure}

\begin{table}
  \begin{center}
\begin{tabular}{|c|c|c|c|c|c|c|c|c|c|c|}
\hline
$N$ & \multicolumn{2}{c|} {$\Gamma_{\rm e}$} &   \multicolumn{2}{c|} {$\Gamma_{\rm k}$} & \multicolumn{2}{c|} {$\Gamma_{\rm c}$}\\
\hline
 & $\omega=10$ & $\omega=100$ &  $\omega=10$ & $\omega=100$ &  $\omega=10$ & $\omega=100$   \\
\hline
32   &          26    &      65   &    42   &   65     &  39   &           65 \\
64   &          24    &     129   &    42   &  129     &  39   &          129 \\
128  &          24    &     222   &    41   &  224     &  39   &          233 \\
256  &          24    &     234   &    40   &  237     &  39   &          249 \\
512  &          24    &     233   &    40   &  238     &  39   &          249 \\
1024 &          24    &     233   &    40   &  238     &  39   &          249 \\
\hline
\end{tabular}
\end{center}
\caption{\label{tab:03}Number of GMRES iterations required to achieve convergence, using a tolerance level of $10^{-9}$, for the considered geometries. $\Gamma_{\rm e}$ (ellipse), $\Gamma_{\rm k}$ (kite curve) and $\Gamma_{\rm c}$ (the cavity problem) for $\omega=10$ and $\omega=100$. In all these three cases the arc-length parametrization has been used.}
\end{table}

\begin{table}
\begin{center}
\begin{tabular}{|c|c|c|c|c|c|c|c|c|c|c|}
\hline
$N$ & \multicolumn{2}{c|} {$\Gamma_{\rm e}$} &   \multicolumn{2}{c|} {$\Gamma_{\rm k}$} & \multicolumn{2}{c|} {$\Gamma_{\rm c}$}\\
\hline
 & $\omega=10$ & $\omega=100$ &  $\omega=10$ & $\omega=100$ &  $\omega=10$ & $\omega=100$   \\
\hline
32   &             34    &      65   &  48    &   65    &   50   &    65\\
64   &             34    &     129   &  49    &  129    &   49   &   129\\
128  &             34    &     226   &  50    &  231    &   47   &   235\\
256  &             34    &     234   &  46    &  240    &   44   &   250\\
512  &             34    &     233   &  46    &  238    &   44   &   248\\
1024 &             34    &     233   &  46    &  235    &   44   &   248\\
\hline
\end{tabular}
\end{center}
\caption{\label{tab:04}Number of GMRES iterations required to achieve convergence, using a tolerance level of $10^{-9}$, for the considered geometries. $\Gamma_{\rm e}$ (ellipse), $\Gamma_{\rm k}$ (kite curve) and $\Gamma_{\rm c}$ (the cavity problem) for $\omega=10$ and $\omega=100$. In all these three cases the {\em natural parametrization} has been used.}
\end{table}

%
%
%
%
\section{Conclusions}
We analyzed in this paper Nystr\"om discretizations of regularized combined field integral formulations for the solution of Navier scattering problems with smooth boundaries and Dirichlet boundary conditions using the Helmholtz decomposition approach in two dimensions. In order to deliver integral equations of the second kind the regularization strategy we propose relies on compositions of the classical Helmholtz BIOs with approximations of DtN operators. We present and analyze in this paper stable discretizations of these compositions of pseudodifferential operators of opposite orders, both in the simpler case of arclength boundary parametrizations as well as in the more challenging case of general smooth parametrizations. The main idea in the analysis is to isolate via logarithmic kernel splittings the principal parts of the pseudodifferential operators involved and compute explicitly their compositions in the framework of Fourier multipliers. The operator composition of more regular remainders, which are all pseudodifferential operators of negative orders, amounts to simple Nystr\"om matrix multiplication and is amenable to a rather straightforward stability analysis. Extensions to the case of Neumann boundary conditions is currently underway.

\paragraph{Acknowledgments} The first author thanks the support of  projects ``Adquisición de conocimiento y minería de datos, funciones especiales y métodos numéricos avanzados'' from Universidad P\'{u}blica de Navarra, Spain and ``Técnicas innovadoras para la resolución de problemas
evolutivos'', ref. PID2022-136441NB-I00 from Ministerio de Ciencia e Innovación, Gobierno de España, Spain.
Catalin Turc gratefully acknowledges support from NSF through contract ``Optimized Domain Decomposition Methods for Wave Propagation in Complex Media'' ref DMS-1908602.

{The authors would like to thank the reviewers for their careful review and valuable suggestions, which helped us to improve the quality of this manuscript.}


\end{document}